\pdfoutput=1
\documentclass[11pt]{article}

\usepackage[T1]{fontenc}
\usepackage[utf8]{inputenc}
\usepackage{lmodern}

\usepackage{microtype}
\usepackage{geometry}

\usepackage{amsmath}
\usepackage{amssymb}
\usepackage{mathtools}
\usepackage{bbm}       
\usepackage{bm}         
\usepackage{mathrsfs}   
\usepackage{dsfont}     

\usepackage{graphicx}
\usepackage{xcolor}
\usepackage{float}
\usepackage{subcaption}
\usepackage{wrapfig}

\usepackage{booktabs}
\usepackage{array}
\usepackage{multirow}

\usepackage{enumitem}

\usepackage{algorithmic}

\usepackage{listings}
\usepackage{verbatim}

\usepackage{nameref}

\usepackage{xspace}

\usepackage[numbers,sort&compress]{natbib}

\usepackage{hyperref}
\hypersetup{
  colorlinks=true,
  linkcolor=blue,
  citecolor=blue,
  urlcolor=blue,
  pdfauthor={},
  pdftitle={}
}

\setlist{itemsep=2pt, topsep=4pt, parsep=0pt, partopsep=0pt}

\mathtoolsset{showmanualtags}

\reversemarginpar
\usepackage[colorinlistoftodos,prependcaption,textsize=tiny]{todonotes}

\newcommand{\mcl}{\mathcal}

\newcommand{\mbb}{\mathbb}

\newcommand{\iidsim}{\stackrel{iid}{\sim}}

\newcommand{\dd}{{\rm d}}

\newcommand{\N}{\mcl{N}}

\renewcommand{\H}{\mcl{H}}
\renewcommand{\L}{\mcl{L}}

\newcommand{\U}{\mcl{U}}

\newcommand{\V}{{\mcl{V}}}
\newcommand{\W}{{\mcl{W}}}

\newcommand{\PP}{\mbb{P}}

\newcommand{\R}{\mbb{R}}
\newcommand{\EE}{\mbb{E}}

\newcommand{\Law}{{\rm Law}\xspace}

\definecolor{darkred}{rgb}{.7,0,0}

\definecolor{darkgreen}{rgb}{.15,.55,0}

\definecolor{darkblue}{rgb}{0,0,0.7}

\usepackage{amsopn}
\usepackage{amssymb}

\reversemarginpar
\usepackage[colorinlistoftodos,prependcaption,textsize=tiny]{todonotes}

\newcommand{\einf}{\mathrm{ess}\inf}

\newcommand{\tr}{\mathrm{tr}}

\newcommand{\kl}{\text{D}_\text{KL}}

\newcommand{\Range}{\mathrm{Range}}

\newcommand{\psiclass}{\mathcal{P}_\psi(\RefM)}

\newcommand{\RefM}{\mu}
\newcommand{\TargetM}{\nu}
\newcommand{\pma}{\tilde{\eta}}

\newcommand{\Var}{\operatorname{Var}}
\newcommand{\Cov}{\operatorname{Cov}}

\usepackage{amsthm}
\usepackage{etoolbox}
\usepackage{algorithm}

\numberwithin{equation}{section}
\numberwithin{algorithm}{section}

\usepackage[capitalize,nameinlink]{cleveref}

\crefname{section}{section}{sections}
\crefname{subsection}{subsection}{subsections}
\Crefname{section}{Section}{Sections}
\Crefname{subsection}{Subsection}{Subsections}
\Crefname{figure}{Figure}{Figures}
\crefformat{equation}{\textup{#2(#1)#3}}
\crefrangeformat{equation}{\textup{#3(#1)#4--#5(#2)#6}}
\crefmultiformat{equation}{\textup{#2(#1)#3}}{ and \textup{#2(#1)#3}}
{, \textup{#2(#1)#3}}{, and \textup{#2(#1)#3}}
\crefrangemultiformat{equation}{\textup{#3(#1)#4--#5(#2)#6}}%
{ and \textup{#3(#1)#4--#5(#2)#6}}{, \textup{#3(#1)#4--#5(#2)#6}}{, and \textup{#3(#1)#4--#5(#2)#6}}
\Crefformat{equation}{#2Equation~\textup{(#1)}#3}
\Crefrangeformat{equation}{Equations~\textup{#3(#1)#4--#5(#2)#6}}
\Crefmultiformat{equation}{Equations~\textup{#2(#1)#3}}{ and \textup{#2(#1)#3}}
{, \textup{#2(#1)#3}}{, and \textup{#2(#1)#3}}
\Crefrangemultiformat{equation}{Equations~\textup{#3(#1)#4--#5(#2)#6}}%
{ and \textup{#3(#1)#4--#5(#2)#6}}{, \textup{#3(#1)#4--#5(#2)#6}}{, and \textup{#3(#1)#4--#5(#2)#6}}
\crefdefaultlabelformat{#2\textup{#1}#3}
\crefname{algorithm}{Algorithm}{Algorithms}

\theoremstyle{plain}
\newtheorem{theorem}{Theorem}[section]
\newtheorem{lemma}[theorem]{Lemma}
\newtheorem{corollary}[theorem]{Corollary}
\newtheorem{proposition}[theorem]{Proposition}
\newtheorem{definition}[theorem]{Definition}

\theoremstyle{remark}
\newtheorem{example}[theorem]{Example}
\newtheorem{remark}[theorem]{Remark}

\theoremstyle{plain}

\crefname{conjecture}{Conjecture}{Conjectures}
\crefname{fact}{Fact}{Facts}
\crefname{example}{Example}{Examples}
\crefname{remark}{Remark}{Remarks}
\crefname{assumption}{Assumption}{Assumptions}

\AddToHook{env/lemma/begin}{\crefalias{theorem}{lemma}}
\AddToHook{env/corollary/begin}{\crefalias{theorem}{corollary}}
\AddToHook{env/proposition/begin}{\crefalias{theorem}{proposition}}
\AddToHook{env/definition/begin}{\crefalias{theorem}{definition}}
\AddToHook{env/conjecture/begin}{\crefalias{theorem}{conjecture}}
\AddToHook{env/fact/begin}{\crefalias{theorem}{fact}}
\AddToHook{env/example/begin}{\crefalias{theorem}{example}}
\AddToHook{env/remark/begin}{\crefalias{theorem}{remark}}
\AddToHook{env/problem/begin}{\crefalias{theorem}{problem}}
\AddToHook{env/assumption/begin}{\crefalias{theorem}{assumption}}

\newenvironment{keywords}
  {\begin{quote}\small\noindent\textbf{Keywords.}\enspace}{\end{quote}}
\newenvironment{MSCcodes}
  {\begin{quote}\small\noindent\textbf{MSC codes.}\enspace}{\end{quote}}

\makeatletter
\newcommand{\preprint@appendix@seccnt}[1]{%
  \ifstrequal{#1}{section}
    {\appendixname\ \thesection.\quad}
    {\csname the#1\endcsname\quad}%
}
\g@addto@macro\appendix{\let\@seccntformat\preprint@appendix@seccnt}
\makeatother

\hypersetup{
  pdftitle={Approximating Measures on Function Spaces: Transport and Truncation},
  pdfauthor={Ricardo Baptista, Bamdad Hosseini, Alexander W. Hsu}
}
\graphicspath{{content/}}

\title{Approximating Measures on Function Spaces: Transport and Truncation}
\date{}
\author{Ricardo Baptista\thanks{Department of Statistical Sciences, University of Toronto, Toronto M5G 1X6, Canada}
\and Bamdad Hosseini\thanks{Department of Applied Mathematics, University of Washington, Seattle, USA}
\and Alexander W. Hsu\thanks{Courant Institute School of Mathematics, Computing, and Data Science
}}

\begin{document}
\maketitle

\begin{abstract}
Measures on function spaces arise throughout Bayesian inverse
problems and generative modeling, often with low-dimensional
structure relative to a tractable reference measure.
We introduce the class \(\psiclass\) of measures that differ from a
reference measure \(\RefM\) only through a finite-dimensional map
\(\psi\) while preserving the reference conditionals on its fibers.
Class members are determined by their \(d\)-dimensional
pushforwards under \(\psi\) and admit convenient block-triangular
transport map representations. Draws are taken from this
\(d\)-dimensional distribution and then completed to function
space through sampling of the reference conditionals.
For Gaussian references, these transport maps are finite rank
perturbations of the identity.
In contrast, optimal transport maps do not preserve this
low-dimensional structure. For covariance perturbations that are
trace class in the Cameron--Martin geometry of the reference, we show
the optimal map is a trace class perturbation of the identity.
Even finite rank perturbations yield corrections whose rank,
governed by an invariant subspace, is typically infinite.
We develop approximation theory for \(\psiclass\) and error
analysis for fitting within it, splitting the total error into an
irreducible class error and a finite-dimensional marginal term set by
the dimension of \(\psi\) rather than the ambient discretization.
We present numerical results including inference from
low-dimensional nonlinear observation maps, deconvolution under a
jump process prior, and state estimation for Navier--Stokes flows.

\end{abstract}

\begin{keywords}
measure transport, optimal transport, dimension reduction, generative modeling, Bayesian inverse problems
\end{keywords}

\begin{MSCcodes}
28C20, 49Q22, 60G15, 62F15
\end{MSCcodes}

\section{Introduction}
In this work, we consider the problem of approximating measures on infinite-dimensional function spaces. These problems frequently arise in Bayesian approaches to inverse problems, spatial statistics, and imaging \cite{stuart2010inverse}, where the unknown quantities can be represented using functions. These approximations are also useful to build generative modeling algorithms, which aim to sample distributions, e.g., over high-resolution images and video, given only a limited set of samples from the distribution.
In both settings we represent the target distribution
as a low-dimensional transformation of a known reference measure.
We develop algorithms that modify a reference measure only through the distribution of a finite-dimensional observation \(w=\psi(u)\), while preserving the reference conditional law along the fibers of \(\psi\).

Let \(\U\) be a Polish space, and \(\RefM\) a probability measure on \(\U\), which we assume is known and computable. While this assumption is not always \emph{exactly} realizable computationally, it serves as a useful model for an idealized infinite-dimensional structure on \(\U\) computable to high accuracy. We will refer to \(\RefM\) as the \emph{reference measure}. Let \(\TargetM\) be a probability measure on \(\U\), which will be referred to as the \emph{target measure}. Our goal is to approximate \(\TargetM\) in the sense of building efficient algorithms for sampling from \(\TargetM\). We will also give special consideration to the case that we are targeting \emph{conditional measures}, \(\TargetM(u \mid y)\) where samples from the joint distribution \(\TargetM(y,u)\) are available and \(\TargetM(u \mid y)\) is approximated via simulation based inference.

For target distributions of the form \(\dd\TargetM \propto \exp(-\Phi(u))\dd\RefM\) with potential \(\Phi\),
we consider approximations with the structure
\begin{equation}\label{eqn:nuhat}
        \dd \widehat{\TargetM}(u)
    = \frac{1}{\widehat Z}\exp(-\phi(\psi(u)))\dd\RefM(u),
\end{equation}
where \(\psi:\mathcal{V}\to \mathbb{R}^d\) is a dimensionality reduction map that is continuous on a subset \(\V \subset \U\) upon which \(\RefM\) (and therefore also \(\TargetM\)) is concentrated.
\(\exp(-\phi \circ \psi(u))\) acts as a finite-dimensional modification to the infinite-dimensional reference measure \(\RefM\). Measures of this form satisfy
\begin{equation}
    \widehat{\TargetM}(\cdot \mid \psi(u) = w) = \RefM(\cdot \mid \psi(u) = w),
\end{equation}
which results in efficient methods for approximately sampling $\TargetM$.
In particular, for \(U \sim \widehat{\TargetM}\) and
\(W = \psi(U)\), we obtain that
\begin{align}
    \Law\left((W,U) \right)
    &= \widehat{\TargetM}_{\psi}(\dd w) \widehat{\TargetM}(\dd u \mid \psi(u) = w),\quad \widehat{\TargetM}_{\psi} := \psi \sharp \widehat{\TargetM} \nonumber \\
    &= \frac{1}{\widehat Z}\exp\left(-\phi(w)\right)\RefM_\psi(\dd w) \RefM(\dd u \mid \psi(u) = w),\quad \RefM_\psi = \psi \sharp \RefM. \label{eqn:two-stage}
\end{align}
The point of this decomposition is that if we can first sample \(W \sim \psi \sharp \widehat{\TargetM}\),
then by drawing a sample
\(U \sim \RefM(\dd u \mid \psi(u) = W)\), we obtain \(U \sim \widehat{\TargetM}\), which is intended to approximate \(\TargetM\). This second step only requires sampling from conditionals of the reference measure \(\RefM\), while preserving the infinite-dimensional structure of the target measure. This and other structural properties are discussed in further detail in \cref{sec:structure}.

For a reference measure \(\RefM\) and dimensionality reduction map \(\psi\), we define the space of such measures
\begin{equation}\label{eqn:psidef}
\psiclass :=
\left\{
\eta \in \mathcal P(\V)
:
\eta_\psi(\operatorname{supp}\RefM_\psi)=1,\quad
\eta(\cdot\mid \psi(u)=w)
=
\RefM(\cdot\mid \psi(u)=w)
\text{ for } \eta_\psi\text{-a.e. }w
\right\}.
\end{equation}

This formulation of dimension reduction includes projection-based subspace methods as one special case, but it also covers non-projection maps such as point evaluations, coarse-grid summaries, and sensor-derived observations. This flexibility is useful in function-space settings, where natural observations may be continuous only on a full-measure regularity class rather than on the ambient space in which error is measured.

\begin{example}
    A concrete example, considered in \cref{subsec:gp2}, is given by the distribution
\begin{equation}\label{eqn:gp2-intro}
    \nu(\dd u) \propto \exp\left(-\frac{1}{2\sigma^2}\sum_{i=1}^d (u(x_i)^2 - y_i)^2\right) \RefM(\dd u),
\end{equation}
where \(\RefM\) is a mean-zero Gaussian process with squared exponential covariance kernel \(k\), and \(x_i \in [0,1]\). This probability measure over functions is highly non-Gaussian and multimodal, but all of the deviation from the reference measure occurs in a low-dimensional subspace.
By taking \(\psi(u) = (u(x_1),\ldots,u(x_d))\), point evaluation at \((x_1,\ldots,x_d)\), we then rewrite \cref{eqn:gp2-intro} as
\begin{equation*}
    \nu(\dd u) \propto \exp\left(-\phi\circ \psi\right) \RefM(\dd u),\quad \phi(w) = \frac{1}{2\sigma^2}\sum_{i=1}^d (w_i^2 - y_i)^2.
\end{equation*}
Therefore \(\nu \in \psiclass\), and we have that
\begin{equation*}
    \nu_{\psi}(\dd w) \propto
    \exp\left(
    -\phi(w) - \frac{1}{2}w^T K^{-1} w
    \right),\quad K_{i,j} = k(x_i,x_j).
\end{equation*}
Draws \(U \sim \nu\) can be obtained by first sampling \(W\) from the
\(d\)-dimensional measure \(\nu_{\psi}\), then sampling the conditioned
Gaussian process \(U \sim \RefM(\cdot \mid u(x_i) = W_i)\).
In this case, we may take \(\V\) to be any reproducing kernel Hilbert space (RKHS) satisfying \(\RefM(\V) = 1\) to ensure that \(\psi\) is bounded.\footnote{The Cameron--Martin space \(\H\) associated to \(\RefM\) itself has \(\RefM(\H) = 0\) and does not satisfy this.} However, we may naturally wish to measure error on an ambient space such as \(L^2([0,1])\), where \(\psi\) is not bounded, which motivates our nesting of \(\V \subset \U\). We show illustrative results for a two-dimensional version built on two-point observations in \cref{fig:intro-gp2}, demonstrating the idea of sampling the distribution of \((U(0.25),U(0.75))\), before completing \(U\) to a function by sampling \(U \sim \RefM(\cdot \mid u(0.25) = W_1, u(0.75) = W_2)\).
\begin{figure}[htbp]
    \centering
    \includegraphics[width=\linewidth]{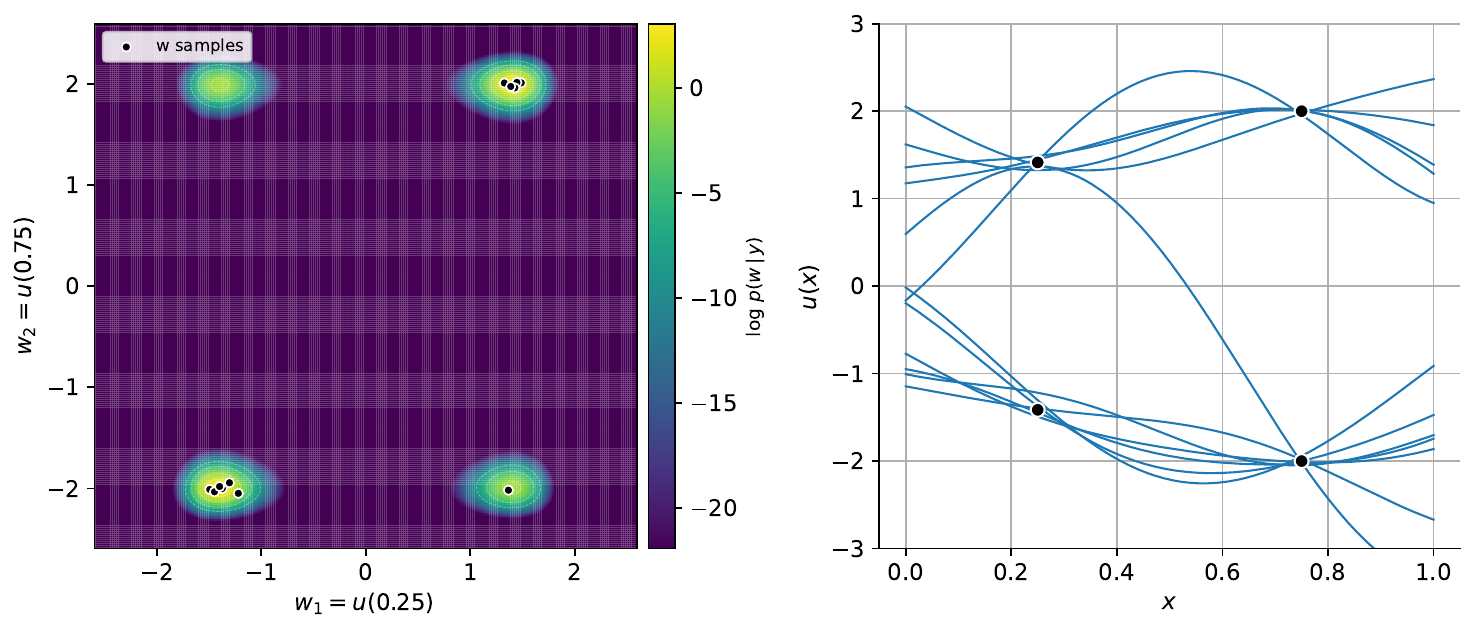}
    \caption{Left: the two-dimensional marginal density of \(W = (U(0.25), U(0.75))\) under the posterior, with samples \(W\) shown as black dots. Right: draws of \(U\) obtained by sampling the Gaussian process prior conditioned on the values \(u(0.25) = W_1\), \(u(0.75) = W_2\) drawn from the distribution on the left.}
    \label{fig:intro-gp2}
\end{figure}
\end{example}

The transport maps from \(\RefM\) to measures in \(\psiclass\) that underlie the sampling procedure \cref{eqn:two-stage} are structured in that they modify \(u\) only through \(\psi(u)\), moving mass along a fixed low-dimensional set of directions while preserving the reference conditional law along the fibers of \(\psi\). For Gaussian references,
these transport maps are finite rank perturbations of the identity.
This
structure is not shared by optimal transport (OT) maps. In the case of transport
maps between Gaussians, we show that for covariance perturbations that
are trace class in the Cameron--Martin geometry of the reference, the
OT map is also a trace class perturbation of the identity.
However, a finite rank perturbation of the
covariance does not necessarily lead to an OT map that is
also a finite rank perturbation of the identity.

\paragraph{Summary of contributions}
Our main contributions are as follows.
\begin{enumerate}
    \item We characterize the class \(\psiclass\) of measures through finite-dimensional representations of both their density with respect to \(\RefM\) (\cref{prop:cond-match}) and the transport maps from \(\RefM\) that generate them (\cref{thm:transport}).
    Within \(\psiclass\), we identify
    the best KL approximations of a given target measure (\cref{prop:best-approx}).

    \item We show that OT maps do not necessarily carry a low-rank structure for
    targets that are low-dimensional perturbations of
    a reference.
    Both maps can be written as the identity plus a correction.
    For Gaussian measures whose covariances differ by a low-rank
    perturbation, the OT correction may be infinite rank, while the
    two-stage correction remains finite rank.

    \item
    As a byproduct of contribution 2, we show a new and independently interesting result about
    OT maps between Gaussian measures. In particular, the OT map between a Gaussian reference and target measure is a trace class
    perturbation of the identity when the covariance of the target measure is a trace class perturbation relative to the Cameron--Martin norm
    (\cref{thm:ot-structure}).

    \item We introduce practical algorithms for approximating target measures by members
    of \(\psiclass\), where only the
    \(d\)-dimensional marginal $\nu_\psi$ is required.
    \(\nu_\psi\) can be approximated using generative models when samples from the
    target $\nu$ are available
    (\cref{alg:reduced-gen,alg:reduced-cond-gen}), and by variational
    inference when a likelihood can be evaluated (\cref{alg:vi}).

    \item We present error analysis quantifying the distance between an approximation \(\hat{\eta}\) and a target measure \(\TargetM\) under various assumptions, such as likelihood smoothness, conditional moments of the reference, and functional inequalities (\cref{sec:analysis}).

\end{enumerate}

\paragraph{Outline} In~\cref{sec:preliminaries}, we review related work, giving background on dimension reduction in generative modeling and Bayesian inference. In~\cref{sec:structure}, we present more details on the structure of measures in \(\psiclass\) based on their density and transport representations while contrasting these with properties of Gaussian OT maps. We present algorithms for computing these measures in
\cref{sec:methods}, and show numerical results and applications in \cref{sec:numerics}. Finally, in~\cref{sec:analysis}, we provide error analysis for the approximation by measures in \(\psiclass\). We collect proofs of our optimal transport results in \cref{app:ot-proofs}.

\paragraph{Notation}
For a probability measure \(\eta \in \mathcal{P}(\U)\) and a Borel function \(g\),
we write \(\eta(\cdot \mid g(u) = w)\) for any version of the regular conditional
measure associated to the \(\sigma\)-algebra generated by \(g\). For a
measurable map \(T\), \(T \sharp \eta := \eta \circ T^{-1} = \Law(T(X)), X\sim \eta\) denotes the
pushforward of \(\eta\). Throughout, \(\psi\)
denotes the dimensionality reduction map fixed by context, and we abbreviate
\(\eta^w := \eta(\cdot \mid \psi(u) = w)\) and \(\eta_\psi := \psi \sharp \eta\).
The two forms \(\eta^w\) and \(\eta(\cdot \mid \psi(u) = w)\) are used interchangeably. We refer to the pre-images of singleton sets \(\{w\}\) under \(\psi\) as the fibers of \(\psi\), \(\psi^{-1}(w):=\left\{u : \psi(u) = w\right\}\).

We write the disintegration of \(\eta\) along the fibers of \(\psi\) as
\(\eta(\dd u) = \eta_\psi(\dd w) \eta^w(\dd u)\),
which is shorthand for the identity
\(\eta(A \cap \psi^{-1}(B)) = \int_B \eta^w(A) \eta_\psi(\dd w)\)
holding for all Borel \(A \subset \U\) and \(B \subset \R^d\).
We use this product-of-differentials form freely to represent
integration against the marginal-then-conditional kernel.

Throughout, \(\RefM(\cdot\mid \psi(u)=w)\) denotes a fixed version of the reference regular conditional distribution. When \(\eta_\psi \ll \RefM_\psi\), this choice of version is immaterial. The advantage of fixing one is that it allows, for instance, marginals \(\psi\sharp \eta\) with atoms.

An operator \(A\) is positive when it is self-adjoint and
\(\langle Au, u\rangle \geq 0\) for every \(u\), and \(\preceq\)
denotes the Loewner order so that \(A \preceq B\) when \(B - A\) is
positive. We will write the trace norm and Hilbert--Schmidt norm of an
operator \(T\) as \(\|T\|_1 := \operatorname{tr}|T|\) and
\(\|T\|_{\mathrm{HS}} := (\operatorname{tr}T^*T)^{1/2}\), and \(T\) is
trace class, respectively Hilbert--Schmidt, when the corresponding
norm is finite. A Gaussian measure \(\N(m, \Sigma)\) is nondegenerate
when its covariance \(\Sigma\) is injective, and its Cameron--Martin
space is \(\Range(\Sigma^{1/2})\), equipped with the inner product
\(\langle \Sigma^{-1/2}u, \Sigma^{-1/2}v \rangle\). Throughout,
optimal transport (OT) refers to optimality under the quadratic cost
\(\|u - v\|^2\), with the norm determined by context and taken to be
that of \(\V\) unless stated otherwise.

\section{Background and related work}\label{sec:preliminaries}
Our methods are aimed at problems where the distribution of interest \(\TargetM\)
lies on a high- or infinite-dimensional space, a reference measure \(\RefM\) is available that captures its structural properties, and the departure of the target from that reference is low dimensional.
\Cref{subsec:lis} reviews dimension reduction in Bayesian inference, where an informative prior serves as a reference measure, and the data may cause only a low-dimensional departure from the prior.
\Cref{subsec:transport-gen} reviews transport maps and conditional generative modeling, which we apply throughout this work. \Cref{subsec:inf-dim} reviews variational inference and sampling on function spaces, where discretization invariance requires compatibility with the reference measure. Discussion of the literature on Gaussian OT maps accompanies our results in \cref{sec:structure}.

\subsection{Dimension reduction in Bayesian inference}\label{subsec:lis}

Gradient-based dimension reduction techniques are widely
used in active subspace methods
for forward uncertainty quantification
\cite{ConstantineActiveSubspace,constantine2015active}, where a
subspace is chosen using the leading eigenvectors of the averaged
gradient outer product \(\EE[\nabla f\, \nabla f^*]\) of a quantity
of interest \(f\).

Likelihood-informed subspaces (LIS) carry this idea to Bayesian
inversion \cite{lis}. With the posterior written in the form \(\dd\TargetM \propto
e^{-\Phi}\dd\RefM\) on \(\R^n\), \(u\) is split as
\(u = u_d + u_\perp\) along a subspace \(\W\) and its
orthogonal complement \(\W^\perp\). The
approximation keeps the prior conditional distribution on
\(\W^\perp\) and
performs inference only on the \(\W\)-marginal,
\begin{equation}\label{eqn:lis-form}
    \overline{\TargetM}(\dd u)
    = \overline{\TargetM}_d(\dd u_d)\, \RefM(\dd u_\perp \mid u_d),
\end{equation}
where \(\overline{\TargetM}_d\) is a surrogate for the marginal
posterior. In \cite{lis}, the complement subspace \(\W^\perp\) is chosen orthogonal with respect to the
inner product of the prior precision, so that the two components are
independent under the Gaussian prior and the conditional in
\cref{eqn:lis-form} is simply the prior marginal on the complement.
In \cite{certified,cui-LIS}, the approximation is directly written
as a density with respect to the prior depending only on the projected
state, which preserves the prior conditionals for any projector. Cui
et al.\ \cite{lis} use the prior-preconditioned Gauss--Newton Hessian of the
data misfit averaged over the posterior to choose \(\W\), and explore the
reduced posterior with Markov chain Monte Carlo (MCMC).
Under the assumption of a subspace log-Sobolev inequality,
 Zahm et al.\ \cite{certified} obtain error bounds in
Kullback--Leibler divergence for approximations of this form, using
the conditional expectation of the likelihood under the prior as a
low-dimensional surrogate. These bounds are based on the residual of
a prior-weighted low-rank approximation to the expected gradient
outer product matrix \(\EE_\TargetM[\nabla\Phi\, \nabla\Phi^*]\), and
minimizing this residual over subspaces reduces to a generalized
eigenproblem.
Under Poincar\'e inequalities on the reference conditionals, Cui and
Tong \cite{cui-LIS} give a unified analysis of gradient-based methods
of this type, treating the subspace truncation error together with
the errors from estimation of the diagnostic matrix and approximate
marginalization. Their bounds are in Hellinger distance and
Kullback--Leibler divergence, and cover a variety of surrogates and
choices of subspace. In the linear Gaussian setting, optimal low-rank
approximations of the posterior covariance on Hilbert spaces are
characterized in \cite{carere2024optimal}. We relate our error
analysis to results of this kind in \cref{subsec:functional}.

Measures of the form \cref{eqn:lis-form} are members of \(\psiclass\).
Let \(v_1, \dots, v_d\) be an orthonormal basis for \(\W\) and set
\(\psi(u) = (\langle v_1, u\rangle, \dots, \langle v_d, u\rangle)\)
and \(\psi^\dagger(w) = \sum_{i=1}^d w_i v_i\),
so that \(\ker\psi = \W^\perp\) and \(\psi^\dagger \circ \psi = P_\W\).
The measure \(\overline{\TargetM}\) in \cref{eqn:lis-form} keeps the reference conditionals on the
fibers of \(\psi\) and modifies only the \(\psi\)-marginal, which is
the defining property of \(\psiclass\). When the surrogate has a
density, \cref{prop:cond-match} gives the representation
\(\overline{\TargetM}(\dd u) \propto e^{-\phi\circ\psi(u)}\dd\RefM(u)\).
Within this class of measures, the optimal choice in
forward Kullback--Leibler divergence is the marginalization of
\cref{prop:best-approx}. Only the \(\sigma\)-algebra generated by
\(\psi\) matters here, so any projector and basis serve. In finite
dimensions and for linear \(\psi\), \(\psiclass\) is
precisely the class of approximations used by LIS methods.
For Gaussian references, the structure of such conditioned measures is
developed in \cite{chen2025gaussian}, which studies a Gaussian measure
conditioned on noisy nonlinear observations of finitely many bounded
linear functionals, exactly the class members with linear \(\psi\) and a
likelihood of the form \(F \circ \psi\). Their representer theorem
decomposes the conditioned measure into an explicitly identified
infinite-dimensional Gaussian and a finite-dimensional non-Gaussian
factor, and they develop small-noise consistency, conditional maximum a posteriori (MAP)
estimators, and simulation methods that focus computational effort on
the finite-dimensional factor, by MCMC or by Laplace and Gauss--Newton
approximations at the MAP. In our terminology, the decomposition is the
sampling characterization of \cref{prop:sampling} with the Gaussian
conditional \cref{eq:gauss-cond}, and the finite-dimensional factor is
the marginal that our algorithms fit with generative models and
transport maps.

We study this class in the function-space setting with a view
towards generative modeling, consider \(\psi\) beyond orthogonal
projections, and fit the marginal with a generative model or a
transport map, avoiding explicit marginalization of the likelihood.

\subsection{Transport maps and generative models}\label{subsec:transport-gen}
Measure transport methods represent a target measure as the
pushforward of a tractable reference, giving one approach to
generative modeling and a convenient parameterization of
measures by functions \cite{marzouk2016sampling}. Triangular maps of Knothe--Rosenblatt type
play a special role because their structure encodes conditioning
\cite{bogachev2005triangular,friendly-triangular}. In particular,
block-triangular maps that push a reference to a joint law produce
exact samplers for its conditionals \cite{mgan}. \cref{thm:transport}
is an instance of this principle for \(\psiclass\), where the
conditionals are imputed by the reference
measure to preserve infinite-dimensional structure.

A related line of work studies conditional OT,
examining triangular maps through the lens of transport costs.
Conditional Brenier maps were introduced as a notion of multivariate
conditional quantiles in \cite{carlier2016vector}. Further,
triangular maps themselves arise as the limit of OT
problems with quadratic costs whose coordinate weights successively
dominate \cite{carlier2010knothe,bonnotte2013knothe}. Data-driven formulations of the
general problem appear in \cite{tabak2021conditional}, and the theory
has since been developed on function spaces
\cite{hosseini2025conditional}, with work on the computation of these maps in
\cite{manupriya2024consistent,baptista2025conditionalSimulation,baptista2025knothe,wang2025conditional,dynamic_cot}.
Chemseddine et al.\ \cite{chemseddine2025conditional} further study
conditional Wasserstein distances by restricting to couplings
compatible with the conditioning variable. These quantities evaluate
the expected Wasserstein distance between the conditionals, the same
quantity that appears in our bounds of \cref{subsec:wasserstein}.

Conditional generative models can be applied to solve Bayesian inference problems using the framework of simulation-based inference. In simulation-based inference, a Bayesian model is accessible through draws from a joint distribution \(\TargetM(y, u)\) of data and parameter, and the model may even be defined through the sampling procedure itself. A conditional generative model is trained on these joint draws \((Y, U) \sim \TargetM\) and samples the posterior \(\TargetM(u \mid y)\) directly \cite{mgan,hosseini2025conditional,dynamic_cot}. All of the numerical examples in this paper operate in this setting, which is addressed by \cref{alg:reduced-cond-gen}.

The marginal models in \cref{alg:reduced-gen,alg:reduced-cond-gen}
are ordinary generative models on \(\R^d\), transports fitted from
marginal samples. Our experiments use flow
matching and stochastic interpolants
\cite{lipman2023flow,albergo2025stochastic,dynamic_cot,functional_flow}, for which
quantitative error bounds are available \cite{benton2023error}.
Normalizing flows \cite{rezende2015variational} and diffusion models
\cite{sohl-dickstein2015noneq,ho2020ddpm,song2019score,song2021score_sde}
would be similarly applicable, since any Wasserstein or Kullback--Leibler
guarantee for the marginal model transfers to function space through
the results of~\cref{sec:analysis}.

When the likelihood can be evaluated, the transport may instead be
fit variationally by minimizing the reverse Kullback--Leibler
divergence over a parameterized family of maps
\cite{moselhy2012bayesian,marzouk2016sampling}. \cref{alg:vi} outlines
this approach in our dimension-reduced setting, with the transport parameterized through its action on
the \(\psi\)-marginal.

A parallel literature builds generative models directly on function
space. Diffusion models have been formulated on Hilbert spaces with
trace class noise
\cite{inf_dim_diffusion,score_function_space,franzese2023functional}.
Well-posedness holds, for instance, when the data measure has a density
with respect to a Gaussian reference, and \cite{inf_dim_diffusion}
proves dimension-independent convergence rates. 
These methods seek to learn the
entire measure, including its infinite-dimensional structure. In our work, the infinite-dimensional structure is
carried exactly by the reference conditionals and the learning task is reduced to a problem on
\(\R^d\) at the cost of an irreducible class error quantified in
\cref{sec:analysis}.

A related family builds transports from maps that act through finitely
many directions. Lazy maps \cite{lazy-map} compose transports confined
to low-dimensional subspaces, each chosen by minimizing the bound of
\cite{certified}, and iterate on the pullback of the target. A single
such step produces a member of \(\psiclass\), and we make this
connection precise in \cref{subsec:vi}. Functional normalizing flows
\cite{func_norm_flow} compose finite rank perturbations of the
identity whose ranges lie in the Cameron--Martin space of the prior,
a constraint that keeps each layer equivalent to the prior with a
computable density. Transports of this type, perturbations of the
identity with Cameron--Martin-valued displacement, exist for any
target absolutely continuous with respect to the Gaussian reference
\cite{bogachev2005triangular}. We show in \cref{rem:finite-rank-layers} that such
layers are again transports into \(\psiclass\), for an enlarged choice
of \(\psi\).

\subsection{Variational inference and sampling on function spaces}\label{subsec:inf-dim}

Variational inference approximates an intractable target
measure \(\TargetM\) by
minimizing a divergence over a chosen family
\cite{jordan1999introduction,blei2017variational}. On function spaces,
the family must be compatible with the reference measure
\(\RefM\) for the divergence to remain finite.
Theory for best approximation by Gaussian measures on function spaces with respect to Kullback--Leibler divergence is established in \cite{pinski2015kullback} and corresponding algorithms are developed in \cite{pinski2015algorithms}. The approximation
studied in this paper is variational inference over the structural
family \(\psiclass\). Its best elements
are characterized exactly in \cref{prop:best-approx}, and
\cref{alg:vi} minimizes the reverse divergence within the class.

Markov chain Monte Carlo on function space requires similar
compatibility with the reference measure, as samplers designed in
finite dimensions often degrade as the discretization is refined.
Proposals that preserve the Gaussian reference
measure, such as the preconditioned Crank--Nicolson method, are well
defined in the infinite-dimensional limit and remain stable under mesh
refinement \cite{cotter2013mcmc}.

\section{Low-dimensional modifications of a reference measure}\label{sec:structure}
In this section, we derive fundamental properties of the class of measures \(\psiclass\) defined in \cref{eqn:psidef}, i.e., measures \(\eta\) on \(\V\)
whose conditionals along the fibers of \(\psi\) agree with those of the reference measure \(\RefM\). In \cref{subsec:character}, we characterize \(\psiclass\) from three perspectives, through sampling, densities, and measure transport. In \cref{subsec:gauss-transport}, we specialize to Gaussian reference measures, where the transports can be written explicitly as finite rank perturbations of the identity.

In \cref{subsec:approx}, we characterize the best approximation of a measure \(\TargetM\) by a measure in \(\psiclass\).

\subsection{Three characterizations of \texorpdfstring{$\psiclass$}{P-psi(mu)}}\label{subsec:character}
\cref{prop:sampling} gives a sampling perspective, \cref{prop:cond-match}
identifies members of $\psiclass$ via their Radon--Nikodym derivative with
respect to \(\RefM\), and \cref{thm:transport} exposes a block-triangular
map structure underlying measures in \(\psiclass\). Each characterization shows that elements of $\psiclass$ are effectively finite dimensional in relation to \(\RefM\).

\begin{proposition}[Sampling perspective]\label{prop:sampling}
    Let \(\eta \in \psiclass\), and let \(\eta_\psi = \psi \sharp \eta\).
    If
    \begin{equation}
        W \sim \eta_\psi, \text{ and } U \sim \RefM(\cdot \mid \psi(u) = W),
    \end{equation}
    then \( \Law(U) = \eta\).
\end{proposition}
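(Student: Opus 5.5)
The plan is to compute the law of \(U\) directly on sets and compare it with the disintegration of \(\eta\) along the fibers of \(\psi\). The sampling procedure says that the joint law of \((W,U)\) is \(\eta_\psi(\dd w)\,\RefM^w(\dd u)\). This is well defined because \(w\mapsto \RefM^w(A)\) is Borel measurable for each Borel \(A\), since \(\RefM^w\) is a fixed version of a regular conditional distribution. Hence, for every Borel \(A\subset\V\),
\begin{equation*}
\PP(U\in A)=\int_{\R^d}\RefM^w(A)\,\eta_\psi(\dd w).
\end{equation*}
The goal is to show that the right-hand side equals \(\eta(A)\).

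The first step uses the disintegration identity from the notation section with \(B=\R^d\):
\begin{equation*}
\eta(A)=\eta\bigl(A\cap\psi^{-1}(\R^d)\bigr)=\int_{\R^d}\eta^w(A)\,\eta_\psi(\dd w).
\end{equation*}
Here \(\psi\) is defined on all of \(\V\) and \(\eta\in\mathcal P(\V)\), so \(\psi^{-1}(\R^d)=\V\). The second step invokes the defining property of \(\psiclass\) in \cref{eqn:psidef}: \(\eta^w=\RefM^w\) for \(\eta_\psi\)-a.e.\ \(w\). The two integrands therefore agree \(\eta_\psi\)-a.e., so the integrals coincide and \(\PP(U\in A)=\eta(A)\) for all Borel \(A\), which gives \(\Law(U)=\eta\).

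The main point of care is measure-theoretic. Equality \(\eta^w=\RefM^w\) as measures for a.e.\ \(w\) gives \(\eta^w(A)=\RefM^w(A)\) for every \(A\) off a single null set, and this is more than needed here, since for each fixed \(A\) a null set depending on \(A\) already suffices. The condition \(\eta_\psi(\operatorname{supp}\RefM_\psi)=1\) ensures that \(W\) almost surely lands where the fixed version \(\RefM^w\) is meaningfully defined. Even without it, the argument only needs \(\RefM^w\) to be some probability kernel that agrees with \(\eta^w\) \(\eta_\psi\)-a.e., and membership in \(\psiclass\) provides exactly that. As a check, the same computation with \(B\) general shows that \(\psi(U)=W\) almost surely when \(\RefM^w\) is concentrated on \(\psi^{-1}(w)\), which is consistent with the interpretation but not required for the claim.
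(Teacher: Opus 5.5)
Your proof is correct and is essentially the paper's argument. The paper substitutes \(\RefM^w=\eta^w\) (valid for \(\eta_\psi\)-a.e.\ \(w\)) into the joint law \(\eta_\psi(\dd w)\RefM^w(\dd u)\), identifies that law with \((\psi,\mathrm{Id})\sharp\eta\), and reads off the \(U\)-marginal; you make the same substitution directly in the marginal, using the disintegration identity with \(B=\R^d\). Your closing remark with general \(B\) is exactly the paper's joint-law identification, and it already gives \(\psi(U)=W\) almost surely without any separate assumption that \(\RefM^w\) is concentrated on the fiber.
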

\begin{proof}
    The joint distribution \(\Law((W,U)) = \eta_\psi(\dd w)\RefM(\dd u \mid \psi(u) = w)\).
    Because
    \(\RefM(\dd u \mid \psi(u) = w) = \eta(\dd u \mid \psi(u) = w)\) \(\eta_\psi\)-almost everywhere,
    \(\Law((W,U)) = \eta_\psi(\dd w)\eta(\dd u \mid \psi(u) = w) = (\psi,\mathrm{Id}) \sharp \eta\).
    Hence, \(\Law(U) = \eta\).
\end{proof}

This characterization amounts to the fact that any measure \(\eta \in \psiclass\) is determined by the marginal distribution \(\psi \sharp \eta\), and outlines a simple procedure for sampling, which is the main motivation for using \(\psiclass\) in practice.

Next, we show that under an additional absolute continuity assumption on $\psi \sharp \eta$,
membership in \(\psiclass\) is equivalent to $\eta$ having a density with respect to $\RefM$ that
depends only on $\psi(u)$. In this sense, \(\eta\) is finite dimensional in relation to \(\RefM\).

\begin{proposition}[Density perspective]\label{prop:cond-match}
    Let \(\eta \in \mathcal{P}(\V)\), and assume that \(\psi \sharp \eta \ll \psi \sharp \RefM\).
    Then \(\eta \in \psiclass\) if and only if there exists a measurable function
    \(\phi: \R^d \to \R \cup \{+\infty\}\), with
    \(\int \exp(-\phi \circ \psi(u)) \dd \RefM(u) \in (0,\infty)\),
    such that
    \begin{equation}\label{eqn:density-rep}
        \dd \eta(u) = \frac{1}{Z}\exp(-\phi \circ \psi(u))\dd\RefM(u),
        \quad Z := \int \exp(-\phi \circ \psi(u))\dd\RefM(u) .
    \end{equation}
\end{proposition}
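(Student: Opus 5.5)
The plan is to prove the two directions separately. In both, the key tool is the disintegration of a measure along the fibers of \(\psi\), together with the uniqueness of regular conditional distributions up to \(\eta_\psi\)-null sets.

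For the forward direction, suppose \(\eta \in \psiclass\) and \(\eta_\psi \ll \RefM_\psi\). Let \(g := \dd\eta_\psi/\dd\RefM_\psi\), a nonnegative measurable function on \(\R^d\) with \(\int g\,\dd\RefM_\psi = 1\). Set \(\phi := -\log g\), with \(\phi = +\infty\) where \(g = 0\). Then
\[
Z = \int e^{-\phi\circ\psi}\,\dd\RefM = \int g\,\dd\RefM_\psi = 1 \in (0,\infty).
\]
To check \cref{eqn:density-rep}, fix Borel sets \(A \subset \U\) and \(B \subset \R^d\). Using the disintegration of \(\eta\) and the defining property \(\eta^w = \RefM^w\) for \(\eta_\psi\)-a.e.\ \(w\), we get
\[
\eta(A\cap\psi^{-1}(B)) = \int_B \RefM^w(A)\, g(w)\, \RefM_\psi(\dd w).
\]
Since \(g\circ\psi\) is constant on each fiber \(\psi^{-1}(w)\), we have \(\int_A g(\psi(u))\,\RefM^w(\dd u) = g(w)\RefM^w(A)\) for \(\RefM_\psi\)-a.e.\ \(w\), because \(\RefM^w\) is concentrated on \(\psi^{-1}(w)\) for a.e.\ \(w\). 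Integrating against \(\RefM_\psi\) over \(B\) therefore gives \(\int_{A\cap\psi^{-1}(B)} g\circ\psi\,\dd\RefM\). Taking \(B = \R^d\) shows \(\eta = (g\circ\psi)\RefM\).

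For the converse, suppose \(\dd\eta = Z^{-1}e^{-\phi\circ\psi}\dd\RefM\). Set \(h := Z^{-1}e^{-\phi}\). Pushing forward gives \(\eta_\psi = h\,\RefM_\psi\). In particular \(\eta_\psi \ll \RefM_\psi\), so \(\eta_\psi(\operatorname{supp}\RefM_\psi) = 1\), because \(\RefM_\psi\) gives full mass to its support; this uses that \(\R^d\) is Polish. Next we must show that \(\RefM^w\) is a version of \(\eta^w\). Take the candidate kernel \(w\mapsto \RefM^w\) and verify the disintegration identity for \(\eta\): for Borel \(A, B\),
\[
\eta(A\cap\psi^{-1}(B)) = \int_{A\cap\psi^{-1}(B)} h\circ\psi\,\dd\RefM = \int_B h(w)\RefM^w(A)\,\RefM_\psi(\dd w) = \int_B \RefM^w(A)\,\eta_\psi(\dd w).
\]
The middle step is the same fiber-constancy argument as in the forward direction. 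By a.e.-uniqueness of regular conditional distributions on Polish spaces, \(\eta^w = \RefM^w\) for \(\eta_\psi\)-a.e.\ \(w\). Hence \(\eta\in\psiclass\).

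The main technical point is the fiber-constancy step: justifying that \(\RefM^w(\psi^{-1}(\{w\})) = 1\) for \(\RefM_\psi\)-a.e.\ \(w\), so that functions of \(\psi(u)\) can be pulled out of the conditional integral. I would prove this via the standard property that regular conditional distributions given \(\sigma(\psi)\) satisfy \(\RefM^w(\psi^{-1}(C)) = \mathbbm{1}_C(w)\) a.e. for \(C\) in a countable generating family of Borel sets of \(\R^d\). This gives concentration on fibers, which is valid because \(\V\) is a Borel subset of a Polish space and \(\psi\) is measurable. A simple-function approximation then extends the pull-out identity from indicators \(\mathbbm{1}_C\circ\psi\) to general nonnegative measurable \(g\circ\psi\).
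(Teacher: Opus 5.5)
Your proof is correct and follows the same route as the paper. Both disintegrate along the fibers of \(\psi\), express \(\eta_\psi\) through its Radon--Nikodym derivative with respect to \(\RefM_\psi\), and conclude by a.e.\ uniqueness of the conditionals. You are slightly more careful than the paper: you check the support condition \(\eta_\psi(\operatorname{supp}\RefM_\psi)=1\) required by the definition of \(\psiclass\), and you justify pulling functions of \(\psi(u)\) out of the fiber integrals, both of which the paper leaves implicit.
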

\begin{proof}
    First, assume \(\eta\) has a density representation given by \cref{eqn:density-rep}.
    Let \(\RefM^w(\dd u), \eta^w(\dd u)\) be disintegrations of \(\RefM, \eta\) respectively
    along the fibers \(\psi(u) = w\). Then
    \(\eta(A \cap \psi^{-1}(B)) = \int_{B}\eta^w(A)\eta_\psi(\dd w)\),
    where \(\eta_\psi = \psi \sharp \eta\).
    Using the density that defines \(\eta\) and the fact that
    \(\eta_\psi(\dd w) = \frac{1}{Z}\exp(-\phi(w))\RefM_\psi(\dd w)\), we have
    \begin{align*}
        \eta(A \cap \psi^{-1}(B))
        &= \frac{1}{Z}\int_{A\cap \psi^{-1}(B)}\exp(-\phi \circ \psi(u))\dd\RefM(u) \\
        &= \frac{1}{Z}\int_{B}\RefM^w(A)\exp(-\phi(w))\RefM_\psi(\dd w),
        \quad \RefM_\psi = \psi \sharp \RefM \\
        &= \int_{B}\RefM^w(A)\eta_\psi(\dd w).
    \end{align*}
    Thus
    \(\int_{B}\eta^w(A)\eta_\psi(\dd w) = \int_{B}\RefM^w(A)\eta_\psi(\dd w)\),
    and therefore \(\RefM^w = \eta^w\) a.e.\ with respect to \(\eta_\psi\),
    showing that \(\eta \in \psiclass\).

    For the opposite direction, assume that \(\eta \in \psiclass\),
    so that
    \(\eta(\cdot \mid \psi(u) = w) = \RefM(\cdot \mid \psi(u) = w)\)
    \(\eta_{\psi}\)-almost everywhere.
    Disintegrating \(\eta\) along the fibers of \(\psi\), we have
    \begin{equation}
        \eta(A \cap \psi^{-1}(B)) = \int_{B}\eta^w(A) \eta_\psi(\dd w)
        = \int_{B}\RefM^w(A) \eta_\psi(\dd w).
    \end{equation}
    By the assumption that \(\psi \sharp \eta \ll \psi \sharp \RefM\) and the Radon--Nikodym theorem,
    there exists a measurable \(\phi\) (possibly extended valued) such that
    \(\eta_\psi(\dd w) = \frac{1}{Z}\exp(-\phi(w))\RefM_\psi(\dd w)\),
    which gives
    \begin{equation}
        \eta(A \cap \psi^{-1}(B)) = \frac{1}{Z}\int_{B}\RefM^w(A) \exp(-\phi(w))\RefM_\psi(\dd w)
        = \frac{1}{Z}\int_{A \cap \psi^{-1}(B)} \exp(-\phi\circ\psi(u))\dd\RefM(u),
    \end{equation}
    establishing the density representation \cref{eqn:density-rep}.
\end{proof}

We now consider transport maps from the reference measure \(\RefM\) to
measures in \(\psiclass\).
The sampling representation of \cref{prop:sampling} gives such maps a natural
block-triangular structure \cite{hosseini2025conditional,mgan}.
For a target \(\eta \in \psiclass\), the map consists of a finite-dimensional
transport between \(\RefM_{\psi}\) and \(\eta_{\psi}\), composed with an
infinite-dimensional map, determined by \(\psi\) and \(\RefM\), that carries
one reference conditional to another.

\begin{theorem}\label{thm:transport}
    Let \(\eta \in \psiclass\).
    Let \(F:\R^d \to \R^d\) be a map such that \(F\sharp\RefM_\psi = \eta_\psi\). Let \(G: \R^d \times \R^d \times \U \to \U\) be jointly measurable, with each \(G(v,w,\cdot): \U \to \U\) a transport map
     such that
     \begin{equation}\label{eq:G-cond}
         G(v,w,\cdot)\sharp\RefM(\cdot\mid \psi(u) = w)=\RefM(\cdot \mid \psi(u) = v).
     \end{equation}
     Then with \(T(u) = G(F(\psi(u)),\psi(u),u)\), we have \(T\sharp \RefM = \eta\).
\end{theorem}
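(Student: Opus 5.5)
The plan is to compute the law of \(T(U)\) for \(U\sim\RefM\) by passing through the joint law of the triple \((\psi(U), F(\psi(U)), U)\). Then I will use \cref{prop:sampling} to identify the result as \(\eta\).

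First, disintegrate \(\RefM\) along the fibers of \(\psi\). Drawing \(U\sim\RefM\) is equivalent to drawing \(W\sim\RefM_\psi\) and then \(U\sim\RefM(\cdot\mid\psi(u)=W)\), and in this representation \(\psi(U)=W\) almost surely. Set \(V:=F(W)\). By hypothesis \(\Law(V)=F\sharp\RefM_\psi=\eta_\psi\), and \(T(U)=G(V,W,U)\). Conditionally on \(W=w\), the variable \(V=F(w)\) is deterministic and \(U\sim\RefM^w\). So by \cref{eq:G-cond}, the conditional law of \(T(U)\) given \(W=w\) is \(G(F(w),w,\cdot)\sharp\RefM^w=\RefM^{F(w)}\).

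Next, integrate over \(w\). For Borel \(A\subset\U\),
\begin{equation*}
\PP(T(U)\in A)=\int \RefM^{F(w)}(A)\,\RefM_\psi(\dd w)=\int \RefM^{v}(A)\,(F\sharp\RefM_\psi)(\dd v)=\int\RefM^v(A)\,\eta_\psi(\dd v),
\end{equation*}
using the change of variables formula for pushforwards. The right-hand side is the law of \(U'\) obtained by \(W'\sim\eta_\psi\), \(U'\sim\RefM(\cdot\mid\psi(u)=W')\). By \cref{prop:sampling}, that law is \(\eta\), so \(T\sharp\RefM=\eta\).

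The step that needs the most care is the first one: making the conditional-law computation rigorous. Concretely, I will verify that
\begin{equation*}
\int \mathbf 1_A(G(F(\psi(u)),\psi(u),u))\,\RefM(\dd u)=\int\!\!\int \mathbf 1_A(G(F(w),w,u))\,\RefM^w(\dd u)\,\RefM_\psi(\dd w).
\end{equation*}
This holds because \(\RefM^w\) is concentrated on the fiber \(\psi^{-1}(w)\) for \(\RefM_\psi\)-a.e. \(w\), where \(\psi(u)=w\), and the integrand is jointly measurable. The latter follows from the joint measurability of \(G\) and the measurability of \(F\), \(\psi\). The concentration on fibers is a standard property of regular conditional distributions on Polish spaces, with \(\psi\) measurable into \(\R^d\). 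One also needs \(w\mapsto\RefM^{F(w)}(A)\) to be measurable, which follows from measurability of the kernel \(v\mapsto\RefM^v(A)\) composed with \(F\).

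A final subtlety is that \cref{eq:G-cond} is applied at pairs \((F(w),w)\) with \(w\) ranging over a \(\RefM_\psi\)-full set. If \cref{eq:G-cond} is only required almost everywhere, the relevant null sets should be taken with respect to \(\RefM_\psi\) in \(w\). The fixed version of \(\RefM^v\) evaluated at \(v=F(w)\) then matches the kernel used in \cref{prop:sampling}.
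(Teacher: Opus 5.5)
Your proposal is correct and follows essentially the same route as the paper's proof: disintegrate \(\RefM\) along the fibers of \(\psi\), apply \cref{eq:G-cond} fiberwise to replace \(G(F(w),w,\cdot)\sharp\RefM^w\) by \(\RefM^{F(w)}\), change variables using \(F\sharp\RefM_\psi=\eta_\psi\), and identify \(\int \RefM^v\,\eta_\psi(\dd v)\) with \(\eta\). The differences are cosmetic: you test against indicators rather than continuous functions, and for the last step you invoke \cref{prop:sampling}, whereas the paper uses \(\RefM^w=\eta^w\) for \(\eta_\psi\)-a.e.\ \(w\) directly.
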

\begin{remark}
    \(G\) does not depend at all on \(\eta\) or \(F\), and is a property of the reference measure, mapping one conditional of the reference measure to another.
\end{remark}
\begin{proof}
    This proof is nearly identical to those showing that constructions of (block) triangular transport maps are correct; see, for example, \cite[Theorem~2.4]{mgan}. Let \(f:\U \to \mathbb{R}\) be a continuous test function. Then
    \begin{align}
        \int_{u\in \U}f(u)(T\sharp\RefM)(\dd u)
        &= \int_{u\in \U} f(G(F(\psi(u)),\psi(u),u)) \RefM(\dd u)\nonumber\\
        &=\int_{w\in \R^d} \int_{u\in \U} f(G(F(w),w,u)) \RefM^{w}(\dd u) \RefM_{\psi}(\dd w) \nonumber\\
        &=\int_{w\in \R^d} \int_{u\in \U} f(u) \RefM^{F(w)}(\dd u) \RefM_{\psi}(\dd w) \label{eq:fw}\\
        &=\int_{w\in \R^d} \int_{u\in \U} f(u) \RefM^{w}(\dd u) (F\sharp\RefM_{\psi})(\dd w)\nonumber\\
        &=\int_{\U} f(u)\eta(\dd u)\label{eq:fsharp},
    \end{align}
    where we used \cref{eq:G-cond} in \cref{eq:fw}, and in \cref{eq:fsharp} that \(F\sharp \RefM_\psi = \eta_\psi\) together with \(\RefM^w = \eta^w\) for \(\eta_\psi\)-almost every \(w\), since \(\eta \in \psiclass\).
    \end{proof}

\subsection{Explicit transports for Gaussian references}\label{subsec:gauss-transport}
When the reference measure is Gaussian and \(\psi\) is linear, simple maps satisfying
\cref{thm:transport} can be written explicitly. Let \(\RefM = \N(m, \Sigma_{uu})\)
be a Gaussian measure on \(\V\) with injective covariance operator
\(\Sigma_{uu}\), let \(\psi:\V\to\R^d\) be a surjective bounded linear map, and
write \(\Sigma_{u\psi} := \Sigma_{uu}\psi^*\),
\(\Sigma_{\psi u} := \Sigma_{u\psi}^* = \psi\Sigma_{uu}\), and
\(\Sigma_{\psi\psi} := \psi\Sigma_{uu}\psi^*\), which is invertible. The
conditionals of \(\RefM\) are again Gaussian, with
\begin{equation}\label{eq:gauss-cond}
    \RefM(\cdot\mid \psi(u)=w)
    = \N\!\left(
        m + \Sigma_{u\psi}\Sigma_{\psi\psi}^{-1}(w - \psi(m)),
        \Sigma_{uu} - \Sigma_{u\psi}\Sigma_{\psi\psi}^{-1}\Sigma_{\psi u}
    \right).
\end{equation}
A classical property is that the \emph{conditional covariance
does not depend on the conditioning value} \(w\), so the shift matching the
conditional means,
\begin{equation}\label{eq:matheron}
    G(v,w,u) = u + \Sigma_{u\psi}\Sigma_{\psi\psi}^{-1}(v - w),
\end{equation}
satisfies \cref{eq:G-cond}. In particular, when \(U \sim \RefM\), we have
\(\Law(G(v,\psi(U),U)) = \RefM(\cdot \mid \psi(u) = v)\). This sampling
procedure for conditionals is often referred to as Matheron's update
\cite{pathwise_gp}. Given an efficient algorithm for generating and representing
prior samples, Matheron's update produces samples conditioned on linear
measurements by applying a sample-dependent shift; we use this for
conditioned sampling in \cref{subsubsec:gauss-cond}.

With this choice of \(G\), the map of \cref{thm:transport} becomes
\begin{equation}\label{eqn:gauss-TF}
    T_F(u) = u + \Sigma_{u\psi}\Sigma_{\psi\psi}^{-1}\left(F(\psi(u)) - \psi(u)\right),
\end{equation}
the identity plus a perturbation acting along the fixed directions
\(\Range(\Sigma_{u\psi})\), with coefficients depending only on \(\psi(u)\).
The transport is therefore as low dimensional as the modification it
implements, moving mass only in a fixed \(d\)-dimensional subspace.
Since \(\RefM_\psi\) is a nondegenerate Gaussian, a map \(F\) with
\(F\sharp\RefM_\psi = \eta_\psi\) exists for every \(\eta\in\psiclass\), so
every measure in \(\psiclass\) is the pushforward of \(\RefM\) under a map of
this form. Conversely, for Gaussian references, any perturbation along
the directions \(\Range(\Sigma_{u\psi})\) with coefficients depending on \(\psi(u)\)
pushes \(\RefM\) into \(\psiclass\).

\begin{corollary}[Finite rank perturbations of the identity]\label{cor:finite-rank}
    Let \(\RefM = \N(m,\Sigma_{uu})\) be as above, let \(\psi:\V\to\R^d\) be a
    surjective bounded linear map, and let \(\rho:\R^d\to\R^d\) be measurable.
    Then the map
    \begin{equation*}
        T(u) = u + \Sigma_{u\psi}\rho(\psi(u))
    \end{equation*}
    satisfies \(T\sharp\RefM \in \psiclass\).
\end{corollary}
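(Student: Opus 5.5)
The plan is to rewrite $T$ in the form $T_F$ of \cref{eqn:gauss-TF} and then apply \cref{thm:transport}. Set
\[
F(w) := w + \Sigma_{\psi\psi}\rho(w),
\]
which is measurable. Since $\Sigma_{u\psi}\Sigma_{\psi\psi}^{-1}(F(w)-w) = \Sigma_{u\psi}\rho(w)$, we get $T(u) = u + \Sigma_{u\psi}\Sigma_{\psi\psi}^{-1}(F(\psi(u))-\psi(u)) = T_F(u)$. That is, $T(u) = G(F(\psi(u)),\psi(u),u)$, with $G$ the Matheron shift \cref{eq:matheron}.

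Next, define the candidate target $\eta := T\sharp\RefM$. We cannot apply \cref{thm:transport} directly, because it presupposes $\eta\in\psiclass$; this is the main point to handle. I would instead run the computation in the proof of \cref{thm:transport} forward without that assumption. Set $\eta' := \RefM_\psi(\dd w)$-mixture of the kernels, namely $\eta' := \int \RefM^{F(w)}\,\RefM_\psi(\dd w) = \int \RefM^{v}\,(F\sharp\RefM_\psi)(\dd v)$. The chain of equalities up to \cref{eq:fw} and the line after it uses only \cref{eq:G-cond}, the joint measurability of $G$ (here it is affine), and the disintegration of $\RefM$. It therefore shows $T\sharp\RefM = \eta'$.

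It then remains to check that a mixture $\int \RefM^v\,\lambda(\dd v)$ with $\lambda := F\sharp\RefM_\psi$ lies in $\psiclass$. First, in the Gaussian case $\RefM^v$ is concentrated on the affine fiber $\psi^{-1}(v)$. This holds because $\psi$ applied to the conditional mean gives $\psi(m) + (v-\psi(m)) = v$, and $\psi(\Sigma_{uu} - \Sigma_{u\psi}\Sigma_{\psi\psi}^{-1}\Sigma_{\psi u})\psi^* = 0$. Hence $\psi\sharp\eta' = \lambda$, and
\[
\eta'(A\cap\psi^{-1}(B)) = \int_B \RefM^v(A)\,\lambda(\dd v).
\]
By uniqueness of disintegration, $\eta'^{\,v} = \RefM^v$ for $\lambda$-a.e.\ $v$. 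For the support condition, $\RefM_\psi = \N(\psi(m),\Sigma_{\psi\psi})$ is nondegenerate, so $\operatorname{supp}\RefM_\psi = \R^d$ and $\eta'_\psi(\operatorname{supp}\RefM_\psi)=1$ holds trivially. Finally, $\eta'$ is a measure on $\V$ because $T$ maps $\V$ into $\V$, as $\Range(\Sigma_{u\psi})\subset\V$.

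The main obstacle is the fiber-concentration and disintegration-uniqueness step, since membership in $\psiclass$ requires the conditionals of $\eta'$ to be the fixed version $\RefM^v$ for $\lambda$-a.e.\ $v$. This must hold even when $\lambda$ is singular with respect to $\RefM_\psi$ (for example $\rho$ constant-like maps producing atoms). I would handle this by using the explicit fixed Gaussian version \cref{eq:gauss-cond}, which is defined for every $v$ and supported on $\psi^{-1}(v)$.
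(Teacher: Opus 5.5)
Your proposal is correct and follows the paper's route. You set \(F(w) = w + \Sigma_{\psi\psi}\rho(w)\), recognize \(T = T_F\) with the Matheron shift \cref{eq:matheron}, and use the computation of \cref{thm:transport} to identify \(T\sharp\RefM\) with the mixture \(\int \RefM^v\,(F\sharp\RefM_\psi)(\dd v)\); your extra verification that this mixture lies in \(\psiclass\) (via fiber concentration of the fixed Gaussian version \cref{eq:gauss-cond}, uniqueness of disintegration, and \(\operatorname{supp}\RefM_\psi = \R^d\)) makes explicit a step that the paper's one-line appeal to \cref{thm:transport} leaves implicit.
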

\begin{proof}
    Since \(\psi(T(u)) = \psi(u) + \Sigma_{\psi\psi}\rho(\psi(u))\), defining
    \(F(w) := w + \Sigma_{\psi\psi}\rho(w)\) gives
    \(\psi\circ T = F\circ\psi\). Substituting
    \(\rho(\psi(u)) = \Sigma_{\psi\psi}^{-1}\left(F(\psi(u)) - \psi(u)\right)\)
    into the definition of \(T\) shows that \(T = T_F\) in
    \cref{eqn:gauss-TF}, that is, \(T(u) = G(F(\psi(u)),\psi(u),u)\) with
    \(G\) the update \cref{eq:matheron}. \cref{thm:transport} then identifies
    \(T\sharp\RefM\) as the measure in \(\psiclass\) with \(\psi\)-marginal
    \(F\sharp\RefM_\psi\).
\end{proof}

\begin{remark}[Choice of space and Cameron--Martin directions]\label{rem:adapted-spaces}
    The transport of \(\RefM\) under a finite rank perturbation of the
    identity can admit a density with respect to \(\RefM\) if and only if
    the perturbation directions lie in the Cameron--Martin space
    \(\Range(\Sigma_{uu}^{1/2})\) \cite{bogachev2015gaussian}. When the space \(\V\) may be chosen for
    the transport at hand, this is in turn equivalent to the setting of
    \cref{cor:finite-rank}, so the boundedness of \(\psi\) costs no
    generality. For any finite collection of such directions, a suitable
    space can be built by grouping the Karhunen--Lo\`eve expansions of the
    directions into blocks with summable variances, which makes the
    associated conditioning functionals bounded. The adaptation of \(\V\)
    to the transport directions would be necessary in the most general case, as any Hilbert space \(\V\) whose dual contains the entire dual of the Cameron--Martin space would itself have to be contained in
    the Cameron--Martin space, which has \(\RefM\)-measure zero. Without
    continuity on some \(\V\), the map \(\psi\) is defined only up to
    \(\RefM\)-null sets, so that its fibers and the reference conditionals along them are no longer necessarily defined in a pointwise sense.
\end{remark}

\begin{remark}[Compositions and layered transports]\label{rem:finite-rank-layers}
    The maps of \cref{cor:finite-rank} are closed under composition. For
    layers with maps \(\psi_1\) and \(\psi_2\), the value
    \(\psi_2(T_1(u))\) is a function of \((\psi_1(u),\psi_2(u))\), so the
    composition is again of the same form for the concatenated map. The
    same concatenation covers perturbations whose directions and
    coefficient functionals differ. Transports layered in this way appear in the
    lazy maps of \cite{lazy-map}, which compose maps \cref{eqn:gauss-TF} and
    so produce measures in the class of their concatenated subspaces, and in
    the functional normalizing flows of \cite{func_norm_flow}. There, a
    typical layer is the planar map
    \(T(u) = u + v_1\tanh(\langle w_1, u\rangle + b_1)\), which is the case
    \(\psi = (\langle w_1,\cdot\rangle, \langle z_1,\cdot\rangle)\) with
    \(\Sigma_{uu}z_1 = v_1\) when \(v_1 \in \Range(\Sigma_{uu})\). For
    any other direction \(v_1\) in the Cameron--Martin space of the prior,
    the same holds after adapting the space \(\V\), as discussed in \cref{rem:adapted-spaces}.
    In either case, a flow of \(k\) such layers produces a measure in
    \(\psiclass\) for a \(\psi\) of dimension at most \(2k\).
\end{remark}

In contrast to the maps described in this section, OT maps
do not, in general, retain low-dimensional structure.
Conversely, the maps of this section will generally neither recover nor approximate
the OT map from \(\RefM\) to \(\eta\) without further assumptions on the compatibility of \(\psi\), \(\RefM\), and the norm used to measure the OT cost.
This holds even when the marginal transport from \(\RefM_\psi\) to \(\eta_\psi\) is itself optimal and \(\eta \in \psiclass\).
From a computational point of view, maps of the form in \cref{eqn:gauss-TF} are much more convenient than OT maps for sampling measures in \(\psiclass\). They require significant computation only on the reduced space, along with a way to apply the covariance operator of the reference measure. In contrast, OT maps will generally mix the spaces together and require significant computation on the full space unless the reference distribution \(\RefM\) is an isotropic Gaussian, or the transport cost is taken to be the squared Cameron--Martin norm associated to a reference Gaussian.

Consider transport maps of the form \(T(u) = Au\) from a centered Gaussian measure with covariance \(\Sigma\) to another centered Gaussian measure with covariance \(\Sigma'\). These maps must satisfy the equation \(A \Sigma A^* = \Sigma'\), which has many solutions. The two-stage structure of the maps of this section imposes that \(A\) is block lower-triangular with variables corresponding to \(\psi\) ordered first, while OT maps impose that \(A\) is positive self-adjoint \cite{knott1984optimal,cuesta1996lower}, leading to possibly very different structures. For Gaussian measures on infinite-dimensional spaces, these maps and
the transport geometry they induce on covariance operators are
developed in \cite{masarotto2019procrustes}. The maps are in general
unbounded operators, and assumptions leading to boundedness are
developed in
\cite{yun2025gaussian,masarotto2024transportation,santoro2025large}.
At the level of transport distances rather than maps,
\cite{masarotto2019procrustes,minh2022finite} establish
convergence of certain
finite-dimensional approximations of the Wasserstein distance between
Gaussian processes.

For the OT map \(T(u) = Au\) when \(\Sigma' = \Sigma + \Gamma\), we
relate the size and support of the correction \(S := A - I\) to those of
the perturbation \(\Gamma\). We show with \cref{thm:ot-structure} that when
\(\Gamma\) is trace class in the Cameron--Martin geometry of the
reference, meaning \(\Gamma = \Sigma^{1/2}H\Sigma^{1/2}\) with \(H\)
trace class, the correction \(S\) is trace class with its trace norm
controlled by that of \(H\), and derive an integral representation for
\(S\). \cref{thm:ot-rank} uses this representation to
characterize the range and rank of \(S\) for \emph{finite rank}
perturbations of the covariance. The proofs of both theorems are
given in \cref{app:ot-proofs}.

\begin{theorem}[Structure of Gaussian optimal transport]\label{thm:ot-structure}
    Let \(\RefM = \N(0, \Sigma)\) be a nondegenerate Gaussian measure
    on \(\V\), and let \(\eta = \N(0, \Sigma')\) with
    \begin{equation*}
        \Sigma' = \Sigma + \Sigma^{1/2} H \Sigma^{1/2}
    \end{equation*}
    for a nonzero self-adjoint trace class operator \(H\) such that
    \(I + H\) is injective. Then \(\eta\) is a nondegenerate Gaussian
    measure equivalent to \(\RefM\), there is a unique bounded positive self-adjoint operator \(A\) with \(A\Sigma A = \Sigma'\), and
    \(T(u) = Au\) is the OT map from \(\RefM\) to
    \(\eta\) for the quadratic cost.

    Moreover, with
    \(R_t := (\Sigma^2 + t^2)^{-1}\) and \(G_t := \Sigma R_t \Sigma\),
    \begin{equation}\label{eqn:ot-structure-int}
        A - I = \frac{2}{\pi}\int_0^\infty t^2
        R_t\Sigma^{1/2} H(I + G_t H)^{-1} \Sigma^{1/2}R_t
        \dd t
    \end{equation}
    and
    \begin{equation}\label{eqn:norm-bound}
        \|A - I\|_1 \leq \frac{\kappa}{2} \|H\|_1, \quad \kappa:= \max\left(1,\|(I + H)^{-1}\|\right) < \infty.
    \end{equation}
\end{theorem}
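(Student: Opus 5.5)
The plan is to write the OT map explicitly through the classical Gaussian formula and then analyze the square root with a resolvent integral. First, the qualitative claims. Since \(\Sigma' \succeq 0\) is a covariance, \(I+H \succeq 0\). Injectivity together with \(H\) compact (Fredholm alternative) makes \(I+H\) boundedly invertible, so \(I+H \succeq \|(I+H)^{-1}\|^{-1} I\). Then \(\Sigma' = \Sigma^{1/2}(I+H)\Sigma^{1/2}\) is injective, and because \(H\) is in particular Hilbert--Schmidt, the Feldman--H\'ajek theorem gives \(\eta \sim \RefM\). Set
\begin{equation*}
M := \Sigma^{1/2}\Sigma'\Sigma^{1/2} = \Sigma^2 + \Sigma H\Sigma = \Sigma(I+H)\Sigma .
\end{equation*}
Any positive self-adjoint \(A\) with \(A\Sigma A = \Sigma'\) satisfies \((\Sigma^{1/2}A\Sigma^{1/2})^2 = M\), so \(\Sigma^{1/2}A\Sigma^{1/2} = M^{1/2}\) by uniqueness of positive square roots. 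Hence \(A = \Sigma^{-1/2}M^{1/2}\Sigma^{-1/2}\), which is unique because \(\Sigma^{1/2}\) is injective with dense range. That this \(A\) is bounded will follow from the trace class estimate below. Optimality of \(u\mapsto Au\) then follows because it is the gradient of the convex function \(\tfrac12\langle Au,u\rangle\) pushing \(\RefM\) to \(\eta\), by the Knott--Smith/Cuesta-Albertos characterization \cite{knott1984optimal,cuesta1996lower}, in its Hilbert-space form from \cite{masarotto2019procrustes}.

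For \cref{eqn:ot-structure-int}, I will use the scalar identity \(x^{1/2} = \frac{2}{\pi}\int_0^\infty \frac{x}{x+t^2}\dd t\) in the spectral calculus. Since \(\Sigma = (\Sigma^2)^{1/2}\), this gives
\begin{equation*}
M^{1/2} - \Sigma = \frac{2}{\pi}\int_0^\infty t^2\big[(\Sigma^2+t^2)^{-1} - (M+t^2)^{-1}\big]\dd t .
\end{equation*}
Next, apply Woodbury to \((R_t^{-1} + \Sigma H \Sigma)^{-1}\), using the invertibility of \(I + G_t H\) (justified below):
\begin{equation*}
(\Sigma^2+t^2)^{-1} - (M+t^2)^{-1} = R_t\Sigma H(I+G_tH)^{-1}\Sigma R_t .
\end{equation*}
Finally, conjugate by \(\Sigma^{-1/2}\), using that \(\Sigma\) commutes with \(R_t\). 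This turns \(\Sigma R_t\) into \(\Sigma^{1/2}R_t\) and yields exactly \cref{eqn:ot-structure-int}. The operations are first justified on a core (e.g.\ \(\Range(\Sigma)\)) and then extended by the bound below.

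For the trace bound, factor \(H = |H|^{1/2}S|H|^{1/2}\), where \(S\) is the sign of \(H\). A push-through identity gives
\begin{equation*}
H(I+G_tH)^{-1} = |H|^{1/2} N_t |H|^{1/2}, \qquad N_t := S\big(I + |H|^{1/2}G_t|H|^{1/2}S\big)^{-1}.
\end{equation*}
With \(X_t := t\,|H|^{1/2}\Sigma^{1/2}R_t\), the integrand becomes \(X_t^*N_tX_t\). Hence
\begin{equation*}
\|X_t^* N_t X_t\|_1 \le \|N_t\|\,\|X_t\|_{\mathrm{HS}}^2 = \|N_t\|\,\tr\big(|H|\,t^2\Sigma R_t^2\big).
\end{equation*}
The scalar computation \(\int_0^\infty t^2\sigma(\sigma^2+t^2)^{-2}\dd t = \pi/4\) holds for every \(\sigma>0\), so \(\int_0^\infty t^2\Sigma R_t^2\dd t = \frac{\pi}{4}I\) strongly. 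Monotone convergence then gives \(\|A-I\|_1 \le \frac{2}{\pi}\cdot\frac{\pi}{4}\,\sup_t\|N_t\|\,\|H\|_1\). It therefore suffices to prove \(\sup_t \|N_t\| \le \kappa\).

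This is the step I expect to be the main obstacle, since \(N_t\) is not manifestly self-adjoint. My plan is to relate it to the symmetric operator
\begin{equation*}
I + G_t^{1/2}HG_t^{1/2} = (I-G_t) + G_t^{1/2}(I+H)G_t^{1/2} \succeq \min\big(1,\|(I+H)^{-1}\|^{-1}\big)I = \kappa^{-1}I,
\end{equation*}
which uses \(0 \preceq G_t \preceq I\). The same bound holds for \(I + H^{1/2}$-type symmetrizations, and this also gives invertibility of \(I+G_tH\) for the Woodbury step. I would show that \(N_t\) equals \(S - S K_t^{1/2}\big(I+K_t^{1/2} S K_t^{1/2}\big)^{-1}K_t^{1/2}S\) with \(K_t = |H|^{1/2}G_t|H|^{1/2}\), and I would try to bound its norm by \(\kappa\). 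The argument would go through the spectral decomposition into \(S=\pm1\) parts, noting that \(I + K_t^{1/2}SK_t^{1/2}\) is again of the form (positive part) + compression of \(I+H\) and hence \(\succeq \kappa^{-1}I\). If that direct estimate fails, the fallback is to bound the positive and negative parts of \(H\) separately: on the \(H\succeq 0\) part the middle factor is a contraction, and on the negative part the lower bound \(I+H\succeq\kappa^{-1}I\) controls the inverse.
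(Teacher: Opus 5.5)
Your outline follows the paper's proof step for step: the operator \(M=\Sigma(I+H)\Sigma\), the resolvent formula for the square root, the factorization \(H=|H|^{1/2}S|H|^{1/2}\), the H\"older estimate and the \(\pi/4\) identity. However, you leave unproved the one step that carries the content of \cref{eqn:norm-bound}, namely the uniform bound \(\sup_t\|N_t\|\le\kappa\), and neither of your suggested routes gives it.

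First, \(N_t\) is self-adjoint after all. Since \(S^2=I\) on \(\overline{\Range(H)}\), we have \(I+K_tS=(S+K_t)S\), hence \(N_t=S(I+K_tS)^{-1}=(S+K_t)^{-1}\).

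Your Woodbury route does not reach \(\kappa\). The lower bound \(I+K_t^{1/2}SK_t^{1/2}\succeq\kappa^{-1}I\) does hold, since its nonzero spectrum agrees with that of \(G_t^{1/2}HG_t^{1/2}\). But combined with the Woodbury form, the triangle inequality only gives \(\|N_t\|\le 1+\kappa\|K_t\|\le 1+\kappa\|H\|\). That suffices for trace-norm convergence of the integral and for boundedness of \(A\), which you deferred to this estimate. It yields only \(\|A-I\|_1\le\tfrac12(1+\kappa\|H\|)\|H\|_1\), not the stated \(\tfrac{\kappa}{2}\|H\|_1\).

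The fallback of handling the positive and negative parts of \(H\) separately does not work either. \(K_t=|H|^{1/2}G_t|H|^{1/2}\) has off-diagonal blocks \(P_+K_tP_-\) coupling the two spectral subspaces of \(S\), so \((S+K_t)^{-1}\) does not split. The integrand is also nonlinear in \(H\), so you cannot decompose \(H=H_+-H_-\) at that level. The pure-sign cases you describe are exactly the easy ones; the mixed-sign case is the whole difficulty.

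The missing idea is the signed-pencil estimate of \cref{lem:signed-pencil}: test \((S+K_t)x\) against \(Sx\) instead of \(x\). Write \(x=x_++x_-\) with \(P_\pm\) the spectral projections of \(S\). The symmetry of \(K_t\) cancels the cross terms, so
\[
\langle Sx,(S+K_t)x\rangle=\|x\|^2+\langle K_tx_+,x_+\rangle-\langle K_tx_-,x_-\rangle\ge\|x\|^2-\langle P_-K_tP_-x,x\rangle\ge\kappa^{-1}\|x\|^2 .
\]
Here you use \(K_t\succeq0\) and \(P_-K_tP_-\preceq P_-|H|P_-\preceq(1-\kappa^{-1})P_-\). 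The first of these uses \(G_t\preceq I\). The second is just \(I+H\succeq\kappa^{-1}I\) read on the negative subspace of \(H\).

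Cauchy--Schwarz then gives \(\|(S+K_t)x\|\ge\kappa^{-1}\|x\|\), and self-adjointness of \(S+K_t\) gives \(\|(S+K_t)^{-1}\|\le\kappa\) uniformly in \(t\). With this lemma inserted, the rest of your argument goes through as written. Note also that the paper obtains boundedness of \(A\) independently of the trace estimate, from operator monotonicity of the square root: \(\kappa^{-1/2}\Sigma\preceq M^{1/2}\preceq(1+\|H\|)^{1/2}\Sigma\).
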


\begin{theorem}[Gaussian optimal transport under finite rank covariance perturbations]\label{thm:ot-rank}
    Let \(\RefM = \N(0, \Sigma)\) be a nondegenerate Gaussian measure
    on \(\V\), let \(\psi:\V\to\R^d\) be a surjective bounded linear map,
    and let \(\eta = \N(0, \Sigma')\) be nondegenerate, with
    \(\Sigma' = \Sigma + \Sigma_{u\psi}C\Sigma_{\psi u}\) for a
    nonzero symmetric matrix \(C\).\footnote{By \cref{prop:cond-match} and the
    Woodbury identity, measures \(\eta\) of this form are the centered Gaussian
    members of \(\psiclass\) equivalent to \(\RefM\). }
    Then the OT map from $\mu$ to $\eta$
    takes the form \(T(u) = Au\) where \(A\) is the unique bounded positive self-adjoint operator satisfying \(A \Sigma A = \Sigma'\). Further, let \(\mathcal{K}\)
    be the smallest closed $\Sigma$-invariant subspace containing $\Range(\psi^*)$.
    Then the following statements hold:
    \begin{enumerate}
        \item \(A - I\) vanishes on \(\mathcal{K}^\perp\) and maps into
        \(\mathcal{K}\), so
        \(\operatorname{rank}(A - I) \leq \dim\mathcal{K}\);
        \item if \(C\) is positive or negative definite, then the closure
        of the range of \(A - I\) is \(\mathcal{K}\), so
        \(\operatorname{rank}(A - I) = \dim\mathcal{K}\);
        \item \(\operatorname{rank}(A - I) \geq
        \frac{1}{2}\operatorname{rank} C\);
        \item if \(\Range(\psi^*)\) is \(\Sigma\)-invariant and \(\psi\psi^* = I\), we have \(A = T_F\) as in
        \cref{eqn:gauss-TF}, with \(F\) the OT map between
        the marginals \(\RefM_\psi\) and \(\eta_\psi\).
    \end{enumerate}
\end{theorem}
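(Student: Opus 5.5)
The plan is to reduce everything to \cref{thm:ot-structure} and then read the structure of \(A-I\) off the integral representation \cref{eqn:ot-structure-int}.

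\textbf{Reduction to \cref{thm:ot-structure}.} Write \(\Sigma_{u\psi}C\Sigma_{\psi u} = \Sigma\psi^*C\psi\Sigma = \Sigma^{1/2}H\Sigma^{1/2}\) with \(H := \Sigma^{1/2}\psi^* C\psi\Sigma^{1/2}\). This \(H\) is self-adjoint and of rank at most \(d\), hence trace class. It is nonzero because \(\Sigma^{1/2}\psi^*\) is injective (\(\psi\) surjective, \(\Sigma\) injective).

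To check that \(I+H\) is injective, suppose \((I+H)x=0\). Then \(x=-Hx\in\Range(\Sigma^{1/2}\psi^*)\), so \(x=\Sigma^{1/2}y\) for some \(y\). This gives \(\Sigma' y=\Sigma^{1/2}(I+H)x=0\), and nondegeneracy of \(\eta\) forces \(y=0\).

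\Cref{thm:ot-structure} then gives the existence and uniqueness of \(A\), the fact that \(T(u)=Au\) is the OT map, and the representation \cref{eqn:ot-structure-int}. Using the push-through identity \(H(I+G_tH)^{-1} = \Sigma^{1/2}\psi^*C(I+N_tC)^{-1}\psi\Sigma^{1/2}\), where \(N_t := \psi\Sigma^{1/2}G_t\Sigma^{1/2}\psi^*\) is a \(d\times d\) matrix, I would rewrite the integrand as
\[
t^2 R_t\Sigma\psi^* M_t \psi\Sigma R_t, \qquad M_t := C(I+N_tC)^{-1}.
\]

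\textbf{Items 1 and 3.} A closed \(\Sigma\)-invariant subspace of the self-adjoint operator \(\Sigma\) reduces \(\Sigma\). It is therefore invariant under every bounded Borel function of \(\Sigma\), in particular under \(R_t\Sigma\). Hence each integrand maps into \(\mathcal{K}\), and so does \(A-I\), since \(\mathcal{K}\) is closed. Because \(A-I\) is self-adjoint, it also vanishes on \(\mathcal{K}^\perp\), which proves item 1.

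For item 3, set \(S=A-I\). Expanding \(A\Sigma A=\Sigma'\) gives \(\Sigma_{u\psi}C\Sigma_{\psi u} = S\Sigma+\Sigma S+S\Sigma S\). The right-hand side has rank at most \(2\operatorname{rank}S\). The left-hand side has rank exactly \(\operatorname{rank}C\), since \(\Sigma\psi^*\) is injective and \(\psi\Sigma\) is surjective onto \(\R^d\).

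\textbf{Item 2, the main obstacle.} First I need \(M_t\) to be definite of the same sign as \(C\).
\begin{itemize}
\item If \(C\succ0\), then \(M_t=(C^{-1}+N_t)^{-1}\succ0\), because \(N_t\succeq0\).
\item If \(C\prec0\), then \(H\preceq0\), and I would use the symmetrized form \(H(I+G_tH)^{-1}=-|H|^{1/2}(I-|H|^{1/2}G_t|H|^{1/2})^{-1}|H|^{1/2}\). Here \(G_t\preceq I\), so \(I-|H|^{1/2}G_t|H|^{1/2}\succeq I+H\). The operator \(I+H\) is positive (as \(\Sigma'\succeq0\)) and injective, and since \(H\) is compact it is boundedly invertible.
\end{itemize}

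With a definite integrand, \((A-I)x=0\) forces \(\langle (A-I)x,x\rangle=0\), hence \(\psi\Sigma R_t x=0\) for all \(t>0\). It then remains to show that \(\{x:\psi\Sigma R_tx=0\ \forall t\}=\mathcal{K}^\perp\). The functions \(\lambda\mapsto \lambda/(\lambda^2+t^2)\), \(t>0\), separate points on the spectrum away from \(0\). By Stone--Weierstrass their span is dense in continuous functions vanishing at \(0\), and since \(\Sigma\) is injective the operators \(f(\Sigma)\psi^*\) generate \(\mathcal{K}\). Thus \(\ker(A-I)=\mathcal{K}^\perp\), so \(\overline{\Range(A-I)}=\mathcal{K}\). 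This density step, together with the sign argument in the negative case, is where I expect the real care to be needed.

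\textbf{Item 4.} If \(\Range(\psi^*)\) is \(\Sigma\)-invariant, then \(\mathcal{K}=\Range(\psi^*)\) is \(d\)-dimensional and reduces both \(\Sigma\) and \(\Sigma'\). By item 1, \(A\) is the identity on \(\mathcal{K}^\perp\) and the Gaussian OT map on \(\mathcal{K}\). With \(\psi\psi^*=I\), \(\psi\) restricts to an isometry \(\mathcal{K}\to\R^d\). Write \(\Sigma\psi^*=\psi^*\Sigma_{\psi\psi}\), so that \(\Sigma_{u\psi}\Sigma_{\psi\psi}^{-1}=\psi^*\). Then \cref{eqn:gauss-TF} becomes \(u+\psi^*(F\psi u-\psi u)\). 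With \(F:=\psi A\psi^*\), which is the positive definite matrix pushing \(\N(0,\Sigma_{\psi\psi})\) to \(\N(0,\psi\Sigma'\psi^*)\) and hence the marginal OT map, this expression coincides with \(Au\).
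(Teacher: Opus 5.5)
Your route is the paper's: the same \(H=\Sigma^{1/2}\psi^*C\psi\Sigma^{1/2}\) and injectivity check, the same push-through collapse of the integrand (your \(M_t\) is the paper's \(W_t^{-1}\)), and essentially the same arguments for items 1, 3 and 4.

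\textbf{The gap is the density step in item 2.} Stone--Weierstrass applies to subalgebras, and the linear span of \(f_t(\lambda)=\lambda/(\lambda^2+t^2)\) is not closed under products. Every element of the span is odd in \(\lambda\), while \(f_sf_t\) is even and nonzero. So point separation alone gives nothing. Nor can you pass to the generated algebra, because your hypothesis \(\langle x,f_t(\Sigma)\psi^*a\rangle=0\) is only linear in \(f_t\).

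The conclusion is still true, and can be repaired as follows.
\begin{enumerate}
\item Let \(\mathcal{A}\) be the uniform closure of the span on \([0,\|\Sigma\|]\).
\item \(\lambda\in\mathcal{A}\), since \(|t^2f_t(\lambda)-\lambda|=\lambda^3/(\lambda^2+t^2)\le\|\Sigma\|^3/t^2\).
\item \(\mathcal{A}\) is stable under multiplication by \(\lambda^2\), since \(\lambda^2f_t=\lambda-t^2f_t\). Hence \(\mathcal{A}\) contains every odd polynomial \(\lambda p(\lambda^2)\).
\item Odd polynomials are dense in the continuous functions vanishing at \(0\). Approximate \(f\) by \(\lambda h(\lambda)\) with \(h=f/\max(\lambda,\delta)\), then approximate \(h\) uniformly by a polynomial in \(\lambda^2\).
\end{enumerate}
This is the function-side form of \cref{lem:resolvent-span}, which the paper proves at the operator level. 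There, the closed span contains \(b=\Sigma\psi^*a\) because \(t^2R_tb\to b\). It is \(\Sigma^2\)-invariant because \(\Sigma^2R_tb=b-t^2R_tb\), hence \(\Sigma\)-invariant. Injectivity of \(\Sigma\) then recovers \(\psi^*a\).

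\textbf{A smaller point for \(C\prec 0\).} Your bound \(I-|H|^{1/2}G_t|H|^{1/2}\succeq I+H\) is a valid operator-level substitute for the paper's Schur complement estimate \(C^{-1}+\Sigma_{\psi\psi}\prec 0\) combined with \(B^*R_tB\preceq\Sigma_{\psi\psi}\). As written, however, it only gives \(H(I+G_tH)^{-1}\preceq 0\). You need \(M_t\) strictly negative definite; otherwise \(\langle x,(A-I)x\rangle=0\) only places \(\psi\Sigma R_tx\) in \(\ker M_t\).

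Add one line. Write \(\Psi=\Sigma^{1/2}\psi^*\). The inverse \(Q_t\) of \(I-|H|^{1/2}G_t|H|^{1/2}\preceq I\) satisfies \(Q_t\succeq I\). Hence \(\Psi M_t\Psi^*=-|H|^{1/2}Q_t|H|^{1/2}\preceq -|H|=\Psi C\Psi^*\). Surjectivity of \(\Psi^*\) then gives \(M_t\preceq C\prec 0\).
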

\begin{proof}[Proof sketch]
    The finite rank perturbation satisfies the hypotheses of
    \cref{thm:ot-structure}, and specializing the representation
    \cref{eqn:ot-structure-int} through the push-through identity
    collapses each integrand to rank at most \(d\), so that
    \begin{equation*}
        S = \frac{2}{\pi}\int_0^\infty t^2
        \left(R_t\Sigma\psi^*\right) W_t^{-1} \left(\psi\Sigma R_t\right)\dd t,
        \qquad
        W_t^{-1} = C\left(I + \psi\Sigma^{3/2}R_t\Sigma^{3/2}\psi^*C\right)^{-1}.
    \end{equation*}
    Each factor
    \(R_t\Sigma\psi^*\) maps into \(\mathcal{K}\), which gives the first
    claim. When \(C\) is definite, the matrices \(W_t^{-1}\) are definite
    with a common sign, so no cancellation can occur in the integral,
    and the closure of the range of \(S\) is the closed span of the
    vectors \(R_t\Sigma\psi^* a\) over \(t > 0\) and \(a \in \R^d\).
    This span is exactly \(\mathcal{K}\), as resolvents applied to a vector span
    the same closed subspace as the Krylov space it generates (\cref{lem:resolvent-span}). The third claim follows from the identity
    \(S\Sigma + \Sigma S + S\Sigma S = \Sigma_{u\psi}C\Sigma_{\psi u}\),
    and the fourth from the product structure that
    \(\Sigma\)-invariance of \(\Range(\psi^*)\) induces on both
    measures.
\end{proof}

\begin{example}[The optimal transport map is not low rank]\label{ex:ot-rank}
    On \(\R^2\), let \(\RefM = \N(0, \Sigma)\) with
    \(\Sigma = \begin{pmatrix} 1 & 3 \\ 3 & 10 \end{pmatrix}\), and let
    \(\psi(u) = u_1\) observe the first coordinate, so that
    \(\Sigma_{\psi\psi} = 1\). Since \(\Sigma e_1 = (1, 3)\) is not
    proportional to \(e_1\), the invariant subspace is
    \(\mathcal{K} = \R^2\). Let
    \(\eta = \N(0, \Sigma + cvv^*) \in \psiclass\) with
    \(v = \Sigma e_1\) and \(c = -\frac{99}{100}\), so that
    \begin{equation*}
        \Sigma + cvv^*
        = \frac{1}{100}\begin{pmatrix} 1 & 3 \\ 3 & 109 \end{pmatrix}.
    \end{equation*}
    Computing the transport map \(T_F\) of \cref{eqn:gauss-TF} where
    \(F(s) = s/10\) is the OT map between the marginals, and comparing to the
    OT map \(A\), we obtain that
    \begin{equation*}
        T_F - I = -\frac{9}{10}\begin{pmatrix} 1 & 0 \\ 3 & 0 \end{pmatrix},
        \qquad
        A - I = \begin{pmatrix} -0.699 & -0.080 \\ -0.080 & -0.646 \end{pmatrix}.
    \end{equation*}
    The perturbation \(T_F - I\) reads only the observed coordinate and
    displaces along the fixed direction \(\Sigma e_1\), so it has rank
    one, while \(A - I\) has eigenvalues \(-0.757\) and \(-0.588\), of
    comparable size, as predicted by \cref{thm:ot-rank}: the OT map
    contracts every direction of \(\R^2\), and is far from any rank-one
    perturbation of the identity.
\end{example}

\begin{remark}[Non-Gaussian targets]\label{rem:ot-rank-general}
    Gaussianity of the target enters \cref{thm:ot-rank} only through the
    rank statements (2) and (3), while claims (1) and (4) extend to every
    \(\eta \in \psiclass\) with finite second moment. Since
    \(\mathcal{K}\) is \(\Sigma\)-invariant, disintegrating gives
    \(\eta = \eta_\psi(\dd w)\,\RefM^w(\dd u)\), and because
    \(\mathcal{K}\) reduces \(\Sigma\) the reference conditionals factor as
    \(\RefM_{\mathcal K}(\cdot \mid \psi = w) \otimes \RefM_{\mathcal K^\perp}\),
    so every such \(\eta\) shares the factor of \(\RefM\) on \(\mathcal{K}^\perp\). The quadratic cost splits
    across this decomposition, so the OT map factorizes
    as well: it exists and is unique
    \cite[Theorem~6.2.10]{ambrosio2008gradient}, restricts to the
    identity on \(\mathcal{K}^\perp\), and its displacement lies in
    \(\mathcal{K}\) and depends only on \(P_{\mathcal{K}}u\). In the
    same way, when \(\Range(\psi^*)\) is \(\Sigma\)-invariant, the proof
    of the final claim applies unchanged, and the OT map
    from \(\RefM\) to any member of \(\psiclass\) is \cref{eqn:gauss-TF},
    with \(F\) the OT map between the marginals. The rank
    equality (2), by contrast, relies on the closed form of the Gaussian
    transport map, and characterizing the span of the displacement for
    general members of the class appears to be open.
\end{remark}

The rank and approximation of the optimal correction \(A - I\) in
\cref{thm:ot-rank} are connected to the literature on low-rank updates and
approximations to matrix functions, as the proof expresses \(A\)
through a resolvent integral representation of operator square roots.
In that literature, \cite{beckermann2018low,beckermann2021rational} construct Krylov subspace
approximations of \(f(M + UCV^*) - f(M)\) that are exact for
polynomial and rational \(f\). For rank-one perturbations to the input matrix, \cite{bernstein2000rational} bound
the rank of the update by the degree when \(f\) is rational. For the
square root, \cite{fasi2023square} study the case of low-rank
perturbations of a scaled identity, where the correction is exactly
low rank. Closed forms of this type are used to numerically compute
OT maps between high-dimensional Gaussians in
\cite{bouveyron2026scaling}, under the restriction that every
covariance is a low-rank perturbation of a multiple of the identity.
In general, \cite{shmueli2024lowrank} establish spectral decay
bounds for the correction (recall that \(A - I\) is
generally trace class and therefore approximable by finite rank
operators, even when not exactly low rank). \cref{thm:ot-rank} shows
that the correction is exactly low rank when the
invariant subspace \(\mathcal{K}\) is low dimensional. From a
statistical point of view, the spiked transport model of
\cite{niles2022estimation} assumes displacements confined to a low-dimensional
subspace, and \cref{thm:ot-rank} characterizes the
Gaussian pairs satisfying this hypothesis, with the spike dimension
given by \(\mathcal{K}\) rather than by the rank of the covariance
perturbation.

Conditions related to the invariant subspace \(\mathcal{K}\) in \cref{thm:ot-rank} appear as compatibility hypotheses in linearized
and sliced OT \cite{moosmuller2023linear,li2024slice},
where one asks that composition with a structured map preserve
optimality. For Gaussian families, this amounts to simultaneous
diagonalizability of the covariance matrices. \cref{thm:ot-rank} gives conditions at the level of subspaces and quantifies their failure
through the rank of the optimal correction. It also characterizes when
the subspace-detour Monge--Knothe transports of
\cite{muzellec2019subspace}, which can coincide with \cref{eqn:gauss-TF} for
targets in \(\psiclass\), are optimal.

Finally, we remark that OT maps do preserve the low-rank structure
when the Cameron--Martin norm of the reference is used in the transport
cost. In finite dimensions this is the Mahalanobis norm
\(\langle \delta, \Sigma^{-1}\delta \rangle\), and the transport between
two Gaussians amounts to whitening with respect to the reference,
transporting, and undoing the whitening. In this case, any subspace is
invariant with respect to the identity covariance operator of the
whitened reference, and the OT map reduces to
\cref{eqn:gauss-TF}, with \(F\) the OT map between the
marginals for the quadratic cost induced by \(\Sigma_{\psi\psi}^{-1}\).
In the infinite-dimensional setting this cost must be treated with
care: the Cameron--Martin space has measure zero under the reference,
the whitening is no longer realizable as a map on \(\V\), and the cost
is infinite for almost every independent pair, so that couplings of
finite cost must concentrate on Cameron--Martin shifts. Optimal
transport for this cost is developed on Wiener space in
\cite{feyel2004monge} (see also \cite{bogachev2012monge}). For
targets at finite transport distance from the reference, the optimal
map is the shift of the identity by the Cameron--Martin gradient of a
convex potential. The displacement of \cref{eqn:gauss-TF} lies in
\(\Range(\Sigma_{u\psi})\), inside the Cameron--Martin space, and is of
exactly this form.

\subsection{Best approximation in KL divergence by measures in \texorpdfstring{$\psiclass$}{P-psi(mu)}}\label{subsec:approx}
Recall our goal of approximating the target measure \(\TargetM\) with a measure \(\eta \in \psiclass\). It is natural to ask whether a \emph{best} approximation exists and whether it can be
identified explicitly. We answer both affirmatively when
the approximation error is measured with respect to forward and reverse Kullback--Leibler divergences.

\begin{proposition}\label{prop:best-approx}
    Let \(\TargetM\in\mathcal{P}(\U)\), assume that there exists some \(\eta\in\psiclass\) such that \(\kl(\TargetM,\eta)<\infty\), and define
    \begin{equation}\label{eqn:opt-forward}
        \tilde{\eta}(\dd u):=(\psi\sharp\TargetM)(\dd w)\RefM(\dd u\mid \psi(u)=w).
    \end{equation}
    Then \(\tilde{\eta}\in\psiclass\) by definition, and \(\kl(\TargetM,\eta)\geq\kl(\TargetM,\tilde{\eta})\) for all \(\eta\in\psiclass\).
    If there exists \(\eta\in\psiclass\) such that \(\kl(\eta,\TargetM)<\infty\), define
    \(
        \phi^{*}(w):=\kl(\RefM^w,\TargetM^w)\)
    and \(Z^{*}:=\int_{\U}\exp(-\phi^{*}\circ\psi(u))\tilde{\eta}(\dd u)\).
    Then \(0<Z^{*}\leq 1\), and with
    \begin{equation}\label{eqn:opt-reverse}
        \eta^{*}(\dd u)=\frac{1}{Z^{*}}\exp(-\phi^{*}\circ\psi(u))\tilde{\eta}(\dd u),
    \end{equation}
    we have \(\eta^{*}\in\psiclass\) and \(\kl(\eta,\TargetM)\geq\kl(\eta^{*},\TargetM)\) for all \(\eta\in\psiclass\).
\end{proposition}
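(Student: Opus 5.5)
Both parts rest on the chain rule for KL divergence under disintegration along the fibers of \(\psi\). For probability measures \(\alpha,\beta\) on \(\U\) with disintegrations \(\alpha=\alpha_\psi(\dd w)\alpha^w(\dd u)\) and \(\beta=\beta_\psi(\dd w)\beta^w(\dd u)\), this rule reads
\begin{equation*}
    \kl(\alpha,\beta)=\kl(\alpha_\psi,\beta_\psi)+\int \kl(\alpha^w,\beta^w)\,\alpha_\psi(\dd w).
\end{equation*}
It holds in \([0,\infty]\) on Polish spaces, and I will take it as standard. It can be proved by writing \(\dd\alpha/\dd\beta(u)=\frac{\dd\alpha_\psi}{\dd\beta_\psi}(\psi(u))\,\frac{\dd\alpha^w}{\dd\beta^w}(u)\) when \(\alpha\ll\beta\), and by checking that both sides are infinite otherwise.

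\textbf{Forward divergence.} Take \(\eta\in\psiclass\) and set \(\alpha=\TargetM\), \(\beta=\eta\). Since \(\eta^w=\RefM^w\) for \(\eta_\psi\)-a.e. \(w\), the chain rule gives
\begin{equation*}
    \kl(\TargetM,\eta)=\kl(\TargetM_\psi,\eta_\psi)+\int\kl(\TargetM^w,\RefM^w)\,\TargetM_\psi(\dd w).
\end{equation*}
Using the version identity here needs some care. If \(\TargetM_\psi\not\ll\eta_\psi\), the first term is already infinite and there is nothing to prove. Otherwise, \(\TargetM_\psi\)-a.e. \(w\) is also \(\eta_\psi\)-a.e., so \(\eta^w=\RefM^w\) there. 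For \(\tilde\eta\) we have \(\tilde\eta_\psi=\TargetM_\psi\) and \(\tilde\eta^w=\RefM^w\) by construction. The same decomposition therefore gives \(\kl(\TargetM,\tilde\eta)=\int\kl(\TargetM^w,\RefM^w)\,\TargetM_\psi(\dd w)\), which is at most \(\kl(\TargetM,\eta)\). This term is finite because some \(\eta\) has finite divergence.

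\textbf{Reverse divergence.} For \(\eta\in\psiclass\), the chain rule with \(\alpha=\eta\), \(\beta=\TargetM\) gives
\begin{equation*}
    \kl(\eta,\TargetM)=\kl(\eta_\psi,\TargetM_\psi)+\int\phi^*(w)\,\eta_\psi(\dd w),
\end{equation*}
where the integrand is \(\kl(\RefM^w,\TargetM^w)=\phi^*(w)\). I then use the Gibbs variational principle (Donsker--Varadhan) on \(\R^d\): for measurable \(\phi^*\ge 0\),
\begin{equation*}
    \kl(\rho,\TargetM_\psi)+\int\phi^*\,\dd\rho=\kl\bigl(\rho,\rho^*\bigr)-\log Z^*,\qquad \rho^*(\dd w)=\frac{1}{Z^*}e^{-\phi^*(w)}\TargetM_\psi(\dd w).
\end{equation*}
This identity holds whenever the left side is finite, or more generally with the usual conventions. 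The right side is minimized at \(\rho=\rho^*\), giving minimum value \(-\log Z^*\).

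It remains to identify \(\rho^*\) with \(\eta^*_\psi\) and to check that \(\eta^*\in\psiclass\). Because \(\tilde\eta_\psi=\TargetM_\psi\), the definition of \(Z^*\) is exactly \(\int e^{-\phi^*}\,\dd\TargetM_\psi\). Then \(\phi^*\ge 0\) gives \(Z^*\le 1\). For \(Z^*>0\), the finite-divergence hypothesis shows that \(\phi^*<\infty\) on a set of positive \(\eta_\psi\)-measure, and \(\eta_\psi\ll\TargetM_\psi\), so that set also has positive \(\TargetM_\psi\)-measure. Next, \(\eta^*\) has a density depending only on \(\psi\) with respect to \(\tilde\eta\), and \(\tilde\eta\in\psiclass\). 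The computation in the proof of \cref{prop:cond-match}, run with \(\tilde\eta\) as base measure, then shows that \(\eta^*\) keeps the conditionals \(\RefM^w\) and has marginal \(\rho^*\). The support condition holds because \(\rho^*\ll\TargetM_\psi\) and \(\tilde\eta_\psi=\TargetM_\psi\) already satisfies it.

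\textbf{Main obstacle.} The main obstacle is the measure-theoretic bookkeeping. Three points need checking: that the version equalities \(\eta^w=\RefM^w\) hold on the sets where the integrals live; that the chain rule and Gibbs identity hold in extended arithmetic when some terms are infinite; and that \(\phi^*\) is measurable in \(w\). For the last point, I would use the Donsker--Varadhan supremum representation of \(\kl\) over a countable family of test functions, which expresses \(\phi^*\) as a countable supremum of measurable functions of \(w\).
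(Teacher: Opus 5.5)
Your proposal is correct and follows the paper's proof: both parts reduce to the \(\psi\)-marginal through the disintegration chain rule, and your Gibbs/Donsker--Varadhan identity is exactly the Radon--Nikodym computation the paper carries out to obtain \(\kl(\eta,\TargetM)\ge -\log Z^{*}\), with equality at \(\eta^{*}\). Your direct argument for \(Z^{*}>0\) from \(\eta_\psi\ll\TargetM_\psi\) is a tidier form of the paper's separate \(Z^{*}=0\) case.

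One step needs qualifying: your claim in the reverse part that \(\tilde\eta_\psi=\TargetM_\psi\) already satisfies the support condition. The reverse hypothesis alone does not give \(\TargetM_\psi(\operatorname{supp}\RefM_\psi)=1\). That does follow under the forward hypothesis, or when \(\dd\TargetM\propto e^{-\Phi}\dd\RefM\) as elsewhere in the paper. Without it, \(\eta^{*}\) can fall outside \(\psiclass\) for a natural choice of \(\RefM^w\) off the support. The paper's own proof shares this imprecision, since it never verifies the support condition for \(\eta^{*}\).
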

\begin{proof}
    For \cref{eqn:opt-forward}, we make use of the chain rule
    \begin{equation}
        \kl(\TargetM,\eta) = \kl(\psi\sharp \TargetM,\psi\sharp \eta) + \int_{\R^d} \kl(\TargetM^w,\eta^w)(\psi \sharp \TargetM)(\dd w).
    \end{equation}
    If \(\kl(\TargetM,\eta)=\infty\), the inequality is trivial.
    Otherwise \(\TargetM_\psi\ll\eta_\psi\), so for \(\eta \in \psiclass\), \(\eta^w = \RefM^w\) holds \(\TargetM_\psi\)-almost everywhere, and the second term is constant for all measures in \(\psiclass\).
    Hence, minimizing \(\kl(\TargetM,\eta)\) over \(\eta \in \psiclass\) is equivalent to minimizing \(\kl(\psi\sharp \TargetM,\psi\sharp \eta) \), so the minimizer must satisfy \(\psi\sharp \TargetM = \psi\sharp \eta\). Finally, in order to ensure that \(\eta \in \psiclass\), we must have that \(\eta(\cdot \mid \psi(u) = w) = \RefM(\cdot \mid \psi(u) = w)\). Combining these two conditions yields the result that
    \(
        \tilde{\eta}(\dd u)
        =
        (\psi\sharp \TargetM)(\dd w)\RefM(\dd u \mid \psi(u) = w)
    \).

    For \cref{eqn:opt-reverse}, we reverse the chain rule and use that \(\eta \in \psiclass\) to obtain, with \(\phi^{*}(w) := \kl(\RefM^w,\TargetM^w)\) as in the statement,
    \begin{equation}\label{eq:reverse-chain}
        \kl(\eta,\TargetM)
        = \kl(\eta_\psi,\TargetM_\psi)
        + \int_{\R^d} \phi^{*}(w) \eta_\psi(\dd w).
    \end{equation}

    Define
    \begin{equation}
        Z^{*} := \int_{\R^d} e^{-\phi^{*}(w)} \TargetM_\psi(\dd w).
    \end{equation}
    If \(Z^{*}=0\), then \(\phi^{*}=\infty\) \(\TargetM_\psi\)-a.e., and \(\kl(\eta,\TargetM)=\infty\) for every \(\eta\in\psiclass\), so the claim is trivial.
    Assume therefore that \(Z^{*}>0\), and set
    \begin{equation}
        \eta^{*}_\psi(\dd w) := \frac{e^{-\phi^{*}(w)}}{Z^{*}} \TargetM_\psi(\dd w).
    \end{equation}
    Now fix \(\eta\in\psiclass\), and assume that \(\kl(\eta,\TargetM)<\infty\).
    By \cref{eq:reverse-chain} we have \(\eta_\psi \ll \TargetM_\psi\) and \(\int \phi^{*} \dd\eta_\psi<\infty\).
    Together with \(\phi^{*} \geq 0\), we have \(\eta_\psi(\{\phi^{*}=\infty\})=0\). On \(\{\phi^{*}<\infty\}\), the density \(\dd\eta^{*}_\psi / \dd\TargetM_\psi = e^{-\phi^{*}}/Z^{*}\) is positive, so \(\eta_\psi \ll \eta^{*}_\psi\).

    Applying the Radon--Nikodym chain rule gives
    \begin{equation*}
        \frac{\dd \eta_\psi}{\dd \TargetM_\psi}
        =
        \frac{\dd \eta_\psi}{\dd \eta^{*}_\psi}
        \frac{\dd \eta^{*}_\psi}{\dd \TargetM_\psi}
        =
        \frac{\dd \eta_\psi}{\dd \eta^{*}_\psi}\frac{e^{-\phi^{*}}}{Z^{*}}.
    \end{equation*}
    Taking logs,
    \begin{equation*}
        \log \left(\frac{\dd \eta_\psi}{\dd \TargetM_\psi}\right)+\phi^{*}
        =
        \log \left(\frac{\dd \eta_\psi}{\dd \eta^{*}_\psi}\right)-\log Z^{*}.
    \end{equation*}
    Integrating against \(\eta_\psi\) yields
    \begin{equation}
        \kl(\eta_\psi,\TargetM_\psi) + \int_{\R^d}\phi^{*}(w) \eta_\psi(\dd w)
        = \kl(\eta_\psi,\eta^{*}_\psi)-\log Z^{*}
        \ge -\log Z^{*}.
    \end{equation}
    Using \cref{eq:reverse-chain}, we obtain that \(\kl(\eta,\TargetM)\ge -\log Z^{*}\) for all \(\eta \in \psiclass\).

    Setting
    \begin{equation*}
        \eta^{*}(\dd u)=\eta^{*}_\psi(\dd w)\RefM(\dd u\mid \psi(u)=w)
    \end{equation*}
    and applying \cref{eq:reverse-chain} with \(\eta=\eta^{*}\) we get
    \begin{equation*}
        \kl(\eta^{*},\TargetM)
        = \kl(\eta^{*}_\psi,\TargetM_\psi)+\int \phi^{*} \dd\eta^{*}_\psi
        = -\log Z^{*}.
    \end{equation*}
    Therefore \(\kl(\eta,\TargetM)\geq \kl(\eta^{*},\TargetM)\) for all \(\eta\in\psiclass\), establishing the optimality of \(\eta^{*}\).
\end{proof}

\section{Computational methods}\label{sec:methods}
In this section we develop algorithms for approximating a target measure by a measure in \(\psiclass\).
Because a measure in \(\psiclass\) is determined by its marginal \(\eta_\psi\) (\cref{prop:sampling}), it suffices to model this finite-dimensional marginal, and \cref{prop:best-approx} identifies the optimal marginals to target.
In \cref{subsec:sample-approx}, we fit the marginal with a generative model trained on samples drawn from the target.
In \cref{subsec:vi}, we instead fit the marginal given access to the likelihood, parameterizing the approximation through the transport maps of \cref{thm:transport} and directly minimizing the reverse KL divergence.
Both routes require sampling from the reference conditionals \(\RefM(\dd u \mid \psi(u) = w)\), which we address in \cref{subsec:cond-sampling}.
\subsection{Generative approximation from samples}\label{subsec:sample-approx}
When approximating a measure \(\TargetM\) from samples, it is convenient, and optimal in forward KL divergence, to target the marginal, taking \(\eta_{\psi} \approx \TargetM_\psi := \psi \sharp \TargetM\) when constructing a measure in \(\psiclass\).
With samples \(u_1,\ldots,u_N \iidsim \TargetM\), this can be done by fitting a generative model to the samples \(w_i = \psi(u_i)\).
In most settings of interest, samples from \(\TargetM\) are not directly available. However, it is often the case (particularly in Bayesian inverse problems) that the target distribution is a conditional of a joint measure, and samples from a \emph{joint distribution} \(\TargetM(y,u)\) are available through simulation \cite{friendly-triangular,hosseini2025conditional,mgan}.
In these cases, \(u\) represents the parameter of interest, \(y\) represents the observed data, and we have access to joint samples \(\{(y_i,u_i)\}_{i=1}^N\). A \emph{conditional} generative model may be fit to approximate the conditional measures \(\TargetM_{\psi}(\cdot \mid y)\) from \(\{(y_i,w_i)\}_{i=1}^N\), where \(w_i = \psi(u_i)\) as before.
Further, these approaches are particularly applicable when samples are only available at a fixed resolution, where we may regard \(w_i = \psi(u_i)\) as coarse observations of \(u_i\) themselves, in which case we may approximate \(\TargetM_\psi\) directly.

In all of these cases, the generative modeling problem is reduced to the finite-dimensional problem of approximating \(\TargetM_{\psi}\) from samples.
\begin{algorithm}
\caption{Reduced generative model}\label{alg:reduced-gen}
\begin{algorithmic}[1]
\REQUIRE Samples \(u_1,\ldots,u_N \iidsim \TargetM\), dimensionality reduction map \(\psi\)
\FOR{$i = 1,\ldots,N$}
    \STATE Compute \(w_i = \psi(u_i)\)
\ENDFOR
\STATE Fit a generative model \(\hat{\eta}_\psi\) using the samples \(w_1,\ldots,w_N\)
\STATE Set \(\hat{\eta}(\dd u) = \hat{\eta}_\psi(\dd w) \RefM(\dd u \mid \psi(u) = w)\)
\end{algorithmic}
\end{algorithm}

\begin{algorithm}
\caption{Reduced conditional generative model}\label{alg:reduced-cond-gen}
\begin{algorithmic}[1]
\REQUIRE Samples \((y_1,u_1),\ldots,(y_N,u_N) \iidsim \TargetM(y,u)\), dimensionality reduction map \(\psi\)
\FOR{$i = 1,\ldots,N$}
    \STATE Compute \(w_i = \psi(u_i)\)
\ENDFOR
\STATE Fit a conditional generative model \(\hat{\eta}_\psi(w \mid y)\) using the samples \((y_i,w_i)\)
\STATE Set \(\hat{\eta}(\dd u \mid y) = \hat{\eta}_\psi(\dd w \mid y) \RefM(\dd u \mid \psi(u) = w)\)
\end{algorithmic}
\end{algorithm}

\subsection{Variational inference with likelihood access}\label{subsec:vi}

When the potential \(\Phi\) can be evaluated and samples are not available, a natural approach is to
minimize the reverse KL divergence over measures in \(\psiclass\) \cite{jordan1999introduction,blei2017variational,pinski2015kullback,pinski2015algorithms,marzouk2016sampling,func_norm_flow}. For
\(\phi:\R^d\to\R\) normalized so that
\(\eta(\phi)(\dd u) := \exp(-\phi\circ\psi(u))\dd\RefM(u)\) is a
probability measure, we seek to minimize
\begin{equation}\label{eqn:reverse-kl-vi}
\kl\left(\eta(\phi), \TargetM\right)
=\log(Z)+\mathbb{E}_{U\sim\eta(\phi)}\left[\Phi(U)-\phi(\psi(U))\right]
\end{equation}
with respect to \(\phi\), where \(Z\) is the normalizing constant of \(\TargetM\).

This variational problem may be directly targeted using the triangular map structure of transport
maps into \(\psiclass\).
Recall from \cref{thm:transport} that for any \(\eta\in\psiclass\), if \(F\sharp\RefM_{\psi}=\eta_{\psi}\)
and \(G(v,w,\cdot)\sharp\RefM(\cdot\mid\psi(u)=w)=\RefM(\cdot\mid\psi(u)=v)\),
then with \(T_{F}(u)=G(F\circ\psi(u),\psi(u),u)\),
we have \(\eta=T_{F}\sharp\RefM\),
so that \(F\) is the only free part of the transport. As \(\log(Z)\) does not depend on \(F\), we may
omit it, and obtain the negative of the evidence lower bound as our objective.
\begin{align}
\L\left(F\right)&=\kl\left(T_{F}\sharp\RefM,\TargetM\right)-\log(Z) \nonumber\\
&=\mathbb{E}_{W\sim\eta_{\psi}}\left[\mathbb{E}\left[\Phi(U)\mid\psi(U)=W\right]-\phi(W)\right]. \label{eqn:vi-objective-phi}
\end{align}
Using that
\(\exp(-\phi(w))\RefM_{\psi}(\dd w)=\left(F\sharp\RefM_{\psi}\right)(\dd w)\)
and assuming that \(\RefM_{\psi}\) has a density \(\rho(w)\) and that \(F\) is a diffeomorphism, we have
\begin{equation}\label{eqn:phi-from-F}
\phi(w)=\log\rho(w)-\log\rho\left(F^{-1}(w)\right)+\log\left|\det DF\left(F^{-1}(w)\right)\right|.
\end{equation}

Substituting \cref{eqn:phi-from-F} into \cref{eqn:vi-objective-phi} and pulling back by \(F\sharp\RefM_{\psi}=\eta_{\psi}\), so that expectations are taken under the reference,
\begin{align}
\L(F) &=\mathbb{E}_{W\sim\RefM_{\psi}}\left[\mathbb{E}\left[\Phi(G(F(W),W,U))\mid\psi(U)=W\right]+\log\frac{\rho\left(W\right)}{\rho(F(W))}-\log\left|\det DF\left(W\right)\right|\right]. \label{eqn:vi-pullback}
\end{align}

Drawing \(U\sim\RefM\) and setting \(W=\psi(U)\), we may replace the conditional expectation with a single-sample estimate, giving the Monte Carlo amenable objective
\begin{equation}\label{eqn:vi-mc}
\L(F) = \mathbb{E}_{\RefM}\left[\Phi(G(F(W),W,U))+\log\frac{\rho\left(W\right)}{\rho(F(W))}-\log\left|\det DF\left(W\right)\right|\right].
\end{equation}
This objective requires evaluating \(\Phi\) at arbitrary points of the entire space. The
term involving the conditional transport map \(G\) may equivalently be expressed as
\begin{equation}
\mathbb{E}\left[\Phi(G(F(W),W,U))\mid\psi(U)=W\right]=\mathbb{E}_{U\sim\RefM^{F(W)}}\left[\Phi(U)\right],
\end{equation}
so that any method for sampling from the reference conditional \(\RefM(\cdot \mid \psi(u) = F(W))\)
may be substituted for the explicit transport map \(G\).

\begin{algorithm}
\caption{Variational inference in \(\psiclass\)}\label{alg:vi}
\begin{algorithmic}[1]
\REQUIRE Potential \(\Phi\), dimensionality reduction map \(\psi\), reference density \(\rho = \dd\RefM_\psi / \dd w\), conditional transport \(G\) satisfying \(G(v,w,\cdot)\sharp\RefM(\cdot \mid \psi(u)=w) = \RefM(\cdot \mid \psi(u) = v)\)
\STATE Minimize over diffeomorphisms \(F:\R^d \to \R^d\),
\begin{equation*}
\L(F) = \mathbb{E}_{\RefM}\left[\Phi(G(F(W),W,U))+\log\frac{\rho(W)}{\rho(F(W))}-\log|\det DF(W)|\right]
\end{equation*}
where \(U \sim \RefM\), \(W = \psi(U)\)
\STATE Set \(\hat{\eta}_\psi = F\sharp\RefM_\psi\)
\STATE Set \(\hat{\eta}(\dd u) = \hat{\eta}_\psi(\dd w) \RefM(\dd u \mid \psi(u) = w)\)
\end{algorithmic}
\end{algorithm}

For Gaussian \(\RefM\) and linear \(\psi\), the transports parameterized
in \cref{alg:vi} coincide with the lazy maps of \cite{lazy-map}. There,
such maps are composed greedily to iteratively improve the variational
approximation, with each active subspace selected by minimizing a
Kullback--Leibler upper bound of the type recalled in
\cref{subsec:functional}. The functional normalizing flows of
\cite{func_norm_flow} minimize the same reverse divergence over
compositions of the finite rank maps of \cref{rem:finite-rank-layers},
with all layers trained jointly rather than greedily.

\subsection{Conditioned sampling of reference measures}\label{subsec:cond-sampling}

The sampling procedures in \cref{alg:reduced-gen,alg:reduced-cond-gen,alg:vi} all
require drawing from the reference conditional \(\RefM(\cdot \mid \psi(u) = w)\).
This step is what preserves the infinite-dimensional structure of the reference measure
while restricting modification to the finite-dimensional component captured by \(\psi\).
We now discuss how this conditioned sampling step may be carried out concretely for
several classes of reference measures.

\subsubsection{Linearly conditioned Gaussian reference measures}\label{subsubsec:gauss-cond}
For a Gaussian reference measure and bounded linear \(\psi\), the conditional
\(\RefM(\cdot \mid \psi(u) = w)\) is the Gaussian \cref{eq:gauss-cond}, and
Matheron's update \cref{eq:matheron} samples it by applying a sample-dependent
shift to a prior draw.
We demonstrate inference based on nonlinear observations of a Gaussian process in \cref{subsec:gp2}.

\paragraph{Sampling Gaussian processes}
When \(\RefM\) is a Gaussian process and \(\psi(u) = (u(x_1),\ldots,u(x_d))\) consists of
pointwise evaluations, the formula \cref{eq:gauss-cond} specializes to standard
Gaussian process (GP) conditioning. A GP sample can be evaluated \emph{exactly} at any
finite collection of points by drawing from the corresponding joint Gaussian distribution.
Furthermore, given an existing sample evaluated at points \(\{x_1,\ldots,x_d\}\),
the values at additional points \(\{x_{d+1},\ldots,x_{d+k}\}\) can be filled in
by drawing from the conditional Gaussian at \(O((d+k)^2)\) cost per new point, extending the Cholesky factor of the covariance as points are added.
This means that one need not commit to a fixed discretization as the sample
can be lazily extended at progressively finer resolution, with each
refinement being an exact draw from the conditional.
In \cite{pathwise_gp}, Matheron's update is combined with random Fourier feature sampling of the prior \cite{rahimi2007randomfeatures}, producing conditioned samples that retain a random feature functional representation.

\paragraph{Nested Brownian bridge and the virtual Brownian tree}
Brownian motion on \([0,T]\) admits particularly simple conditioned sampling.
By the Markov property, the process on any subinterval \([s,t]\),
conditioned on \(B_s\) and \(B_t\), is a Brownian bridge:
\begin{equation}\label{eq:bb}
    B_r \mid B_s, B_t
    \sim \N\!\left(B_s + \frac{r-s}{t-s}(B_t - B_s), \frac{(r-s)(t-r)}{t-s}\right),
    \quad r\in(s,t),
\end{equation}
and is conditionally independent of the process outside \([s,t]\).
Given any collection of previously observed values,
the process between two adjacent observations is therefore again a Brownian bridge
and may be sampled by conditioning only on those two neighbors.
Each new evaluation point therefore requires a single univariate Gaussian draw
at \(O(1)\) cost (plus \(O(\log n)\) to locate the neighbors among \(n\) existing points).

The application of this recursive refinement to dyadic points is closely related to the
L\'evy--Ciesielski representation via the Haar basis,
and to the virtual Brownian tree construction
applied to adaptive simulation of stochastic differential equations
\cite{2025singleseedgenerationbrownianpaths,pmlr-scalable-gradients}.
It is a special case of \cref{eq:gauss-cond} for the covariance
\(K(s,t) = \min(s,t)\), where the Markov property reduces the
general GP conditioning update to dependence on only two neighboring points.

By \cref{eq:matheron}, a sample (with the associated functional representation) of a Brownian motion or Brownian bridge may be conditioned on finitely many pointwise observations by adding a piecewise-linear function.

\subsubsection{Processes with independent increments}\label{subsubsec:bpjp}
When the reference measure is the law of a process with independent increments
and \(\psi(u) = (u(x_1),\ldots,u(x_d))\) consists of pointwise evaluations
at ordered points \(0 = x_0 < x_1 < \cdots < x_d < x_{d+1} = T\),
the conditional \(\RefM(\cdot \mid \psi(u) = w)\) decomposes into
\(d+1\) independent bridge problems, one on each subinterval \([x_j, x_{j+1}]\)
with prescribed endpoint values.
It is often possible to sample the process within these subintervals conditioned on these endpoint values.

As a non-Gaussian example that still admits efficient conditioned sampling,
consider the sum of a Brownian motion and a compound Poisson process with Gaussian jump sizes:
\begin{equation}\label{eq:bpjp}
    X_t = X_0 + B_t + \sum_{i=1}^{N_t} J_i,
    \qquad J_i \iidsim \mathcal N(0,\sigma_{\mathrm{jump}}^2),
\end{equation}
where \(B_t\) is a Brownian motion with \(\Var(B_t) = \sigma_{\mathrm{bm}}^2 t\) and \(N_t \sim \mathrm{Pois}(\lambda t)\), with \(B_t\), \(N_t\), and the jump sizes mutually independent. This is a L\'evy process, with stationary and independent increments, and is the log of a particular Merton jump diffusion process \cite{Merton1976JumpDiffusion}; we will refer to it as a Brownian Poisson jump process.
The process is càdlàg with finitely many jumps on every compact interval.

To sample the \emph{bridge} of this process conditioned on \(X_0 = a\) and \(X_T = b\),
set \(\Delta = b - a\) and proceed in three steps.

\paragraph{Step 1: sample the jump count}
Since \(X_T - X_0 \mid (N_T=k) \sim \N(0 , \sigma_{\mathrm{bm}}^2 T + k\sigma_{\mathrm{jump}}^2)\), and the prior distribution on the number of jumps is Poisson, Bayes' rule gives that
\begin{equation}\label{eq:count-probs}
    \PP(N_T = k \mid X_T - X_0 = \Delta)
    \propto \exp(-\lambda T)\frac{(\lambda T)^k}{k!}\varphi(\Delta; 0, \sigma_{\mathrm{bm}}^2 T + k\sigma_{\mathrm{jump}}^2),
\end{equation}
for \(k \geq 0\), where \(\varphi\) denotes the Gaussian density with the specified mean and variance.
These probabilities are cheap to compute, and the sum may be safely truncated owing to the exponential decay of the Poisson weights.

\paragraph{Step 2: sample the jump times}
Conditional on \(N_T = K\), the jump times are sampled as the order
statistics of \(K\) independent uniforms on \([0,T]\), giving
\(\tau_1 < \cdots < \tau_K\).

\paragraph{Step 3: sample the jump sizes}
After conditioning on \(K\), and before conditioning on the endpoint value, \(B_T,J_1,\ldots,J_K\) are independent Gaussians. Thus, in order to sample
\begin{equation}
    B_T,J_1,\ldots,J_K \mid B_T + \sum_{i=1}^K J_i = \Delta,
\end{equation}
we independently sample
\[
\widetilde{B}_T \sim \mathcal N(0,\sigma_{\mathrm{bm}}^2 T),
\qquad
\widetilde{J}_1,\ldots,\widetilde{J}_K \iidsim \mathcal N(0,\sigma_{\mathrm{jump}}^2)
\]
and apply Matheron's update to obtain a conditional sample as an inverse-covariance-weighted projection onto the conditioning set \cite{pathwise_gp}. With \(v^2 = \sigma_{\mathrm{bm}}^2 T + K \sigma_{\mathrm{jump}}^2\) and \(\widetilde{S} = \widetilde{B}_T + \sum_{i=1}^K \widetilde{J}_i\), set
\begin{equation}
    B_T = \widetilde{B}_T + (\Delta - \widetilde{S})\frac{\sigma_{\mathrm{bm}}^2 T}{v^2},\quad
    J_i = \widetilde{J}_i + (\Delta - \widetilde{S})\frac{\sigma_{\mathrm{jump}}^2}{v^2}.
\end{equation}

A Brownian bridge is constructed conditioned on the value of \(B_T\), and the process may be evaluated at an arbitrary point \(t\in [0,T]\) as
\begin{equation}
    X_t = a + B_t + \sum_{i:\tau_i\leq t} J_i.
\end{equation}
We may also compute the density \(p_T\) of the increment \(X_T - X_0\) using \cref{eq:count-probs} and summing over the probabilities of jump counts,
\begin{equation}
    p_{T}(x) = \exp(-\lambda T)\sum_{k=0}^\infty
    \frac{(\lambda T)^k}{k!}\varphi(x; 0, \sigma_{\mathrm{bm}}^2 T + k\sigma_{\mathrm{jump}}^2).
\end{equation}
In particular, the law of the process is not log-concave. Over a
short interval, an increment either contains no jump, giving a narrow
Gaussian, or a single jump, giving a much wider one, and a mixture of
this kind is not log-concave in general.

The Markov property allows us to condition on values \(w_1, \ldots, w_m\) at an arbitrary collection of points \(0=x_1<\cdots<x_m=T\) by applying this procedure to each subinterval independently, sampling the outer endpoint values \(w_1, w_m \iidsim q\) for a boundary distribution \(q\) of our choosing. Along with another application of Bayes' rule, this allows us to compute the joint density of a sequence of observations as the product of the densities of each independent increment.
If we were to take the density of the Brownian Poisson jump process sampled at discrete points without conditioning on the right endpoint, we would have
\begin{equation}
    p(w_2,\ldots,w_{m-1},w_m \mid w_1) = \prod_{i=1}^{m-1}p_{x_{i+1} - x_i}(w_{i+1} - w_{i}),
\end{equation}
so, using \(p(w_m \mid w_1) = p_{x_m - x_1}(w_m - w_1)\),
\begin{equation}
    p(w_2,\ldots,w_{m-1} \mid w_1, w_m) =
    \frac{\prod_{i=1}^{m-1}p_{x_{i+1} - x_i}(w_{i+1} - w_{i})}{
        p_{x_m -x_1}(w_m - w_1)
    }.
\end{equation}
Finally, imputing the boundary distribution \(q\) for the endpoint values,
\begin{equation}
    p(w_1,\ldots,w_{m}) = q(w_1)q(w_m)
    \frac{\prod_{i=1}^{m-1}p_{x_{i+1} - x_i}(w_{i+1} - w_{i})}{
        p_{x_m - x_1}(w_m - w_1)}.
\end{equation}
Numerical results with this prior are demonstrated in \cref{subsec:jump-deconv}.

\subsubsection{Conditioned sampling via an auxiliary generative model}\label{subsubsec:aux-gen}
When exact conditioned sampling from \(\RefM(\cdot \mid \psi(u) = w)\) is
not tractable, one may instead approximate this conditional using a
learned generative model. Concretely, let \(\hat{\RefM}(\cdot \mid w)\)
denote a conditional generative model trained to approximate
\(\RefM(\cdot \mid \psi(u) = w)\).

The auxiliary model \(\hat{\RefM}\) can serve as a learned reference for
the conditional fine-scale structure, and need not be adapted to
any particular target \(\TargetM\). Its role is to approximate
\(\RefM(\cdot \mid \psi(u) = w)\) accurately, with no requirement on the distribution of \(\psi(U)\).
In practice, it may be trained on samples from a broad data distribution
that serves as an artificial reference.
For instance, \(\RefM\) could be a distribution over a diverse
collection of high-resolution natural images, \(\TargetM\) a distribution
specific to a particular dataset, and \(\psi\) a coarsening operator
to low resolution.
The task-specific model then operates only on the low-dimensional
component \(w = \psi(u)\), while the auxiliary model handles
the conditional generation of fine-scale detail.

This two-stage structure---adapting the coarse component to the target,
then drawing the fine-scale residual from a learned conditional---is
closely related to the framework of \cite{debias-coarsely},
which composes a coarse-scale debiasing step with diffusion-based
conditional sampling for high-resolution generation.
In our setting, the coarse step corresponds to sampling
\(w \sim \eta_\psi\), and the fine step to drawing from
\(\hat{\RefM}(\cdot \mid w) \approx \RefM(\cdot \mid \psi(u) = w)\).

This decomposition admits a natural form of transfer learning based on multi-resolution sampling:
the auxiliary model, trained once on a large generic dataset,
may be reused across different targets sharing the same reference,
while only the low-dimensional coarse model is refit per task.
The sample complexity of the task-specific model depends on the
dimension of \(\psi(u)\) rather than the ambient dimension,
while the cost of modeling fine-scale conditional structure is
amortized, with the model trained on a possibly broader dataset.

We demonstrate this approach in \cref{subsec:navier-stokes}, performing state estimation from sparse sensor measurements on data from the Navier--Stokes equations.

\section{Numerical results}\label{sec:numerics}
We present three numerical examples. In the first, a Gaussian process
is observed through a nonlinear function of pointwise values. The
likelihood depends on \(u\) exactly through \(\psi(u)\), and the
reference conditionals are Gaussian. Both the dimension reduction and
the conditional sampling are exact. In the second, we consider
deconvolution under a jump process prior, where the likelihood is
only approximately determined by \(\psi(u)\), while the conditionals
are sampled exactly by the bridge constructions of
\cref{subsec:cond-sampling}. In the third, we consider a filtering
problem for solutions of the Navier--Stokes equations, where both the
marginal and the reference conditionals are learned, following
\cref{subsubsec:aux-gen}.

\subsection{Nonlinear inference on a Gaussian process}\label{subsec:gp2}
Let $\Omega = [0, 1]$ and let $\mu \in \mathcal{P}(L^2(\Omega))$ be a mean-zero Gaussian process with covariance kernel $k(x,x') = \exp(-\frac{(x-x')^2}{2 \cdot (0.15^2)})$, concentrated on any of the Sobolev spaces $\V = H^s(\Omega)$, $s > 1/2$, on which the point evaluations defining $\psi$ are bounded linear functionals. Fix $d = 5$ observation locations $\{x_i\}_{i=1}^d \subset \Omega$ and noise variance $\sigma^2 = 10^{-2}$. We take $\psi$ to be pointwise evaluation at the observation locations, $\psi(u) := (u(x_1), \ldots, u(x_d)) \in \R^d$, and with this choice, the dimension reduction is exact, as the likelihood depends only on these values. The observational model is
\begin{equation}
    y_i = u(x_i)^2 + \epsilon_i, \qquad i = 1, \ldots, d,
\end{equation}
where $\epsilon_i \iidsim \N(0, \sigma^2)$. This posterior distribution is multimodal as each observation gives information about $u(x_i)^2$, making the sign ambiguous at each of the observation locations and leaving up to \(2^5 = 32\) modes. We train a conditional flow matching model with three hidden layers of size 128 on 50,000 joint draws $(Y, \psi(U))$, giving an approximation of \(\psi \sharp \TargetM(\cdot \mid y)\) for any observation $y$, and then sample the Gaussian process prior conditioned on these values directly. The model captures both the multimodality and the functional structure of the data; see \cref{fig:GP2}. Each of the 32 modes corresponds to a sequence of five signs indicating whether the function is positive or negative at each of the five observation sites. Using this to partition the probability mass into these modes, we obtain good agreement with the ground truth mode probabilities; see \cref{fig:prob-compare}. We emphasize, however, that this performance rests on the quality of the 5-dimensional generative model, and that the dimension reduction here is exact. This example illustrates the sampling perspective on \(\psiclass\) in \cref{prop:sampling}. We note that this setting fits that of \cite{chen2025gaussian}, whose representer theorem describes such conditioned Gaussians.

\begin{figure}[htbp]
    \centering
    \includegraphics[width=\linewidth]{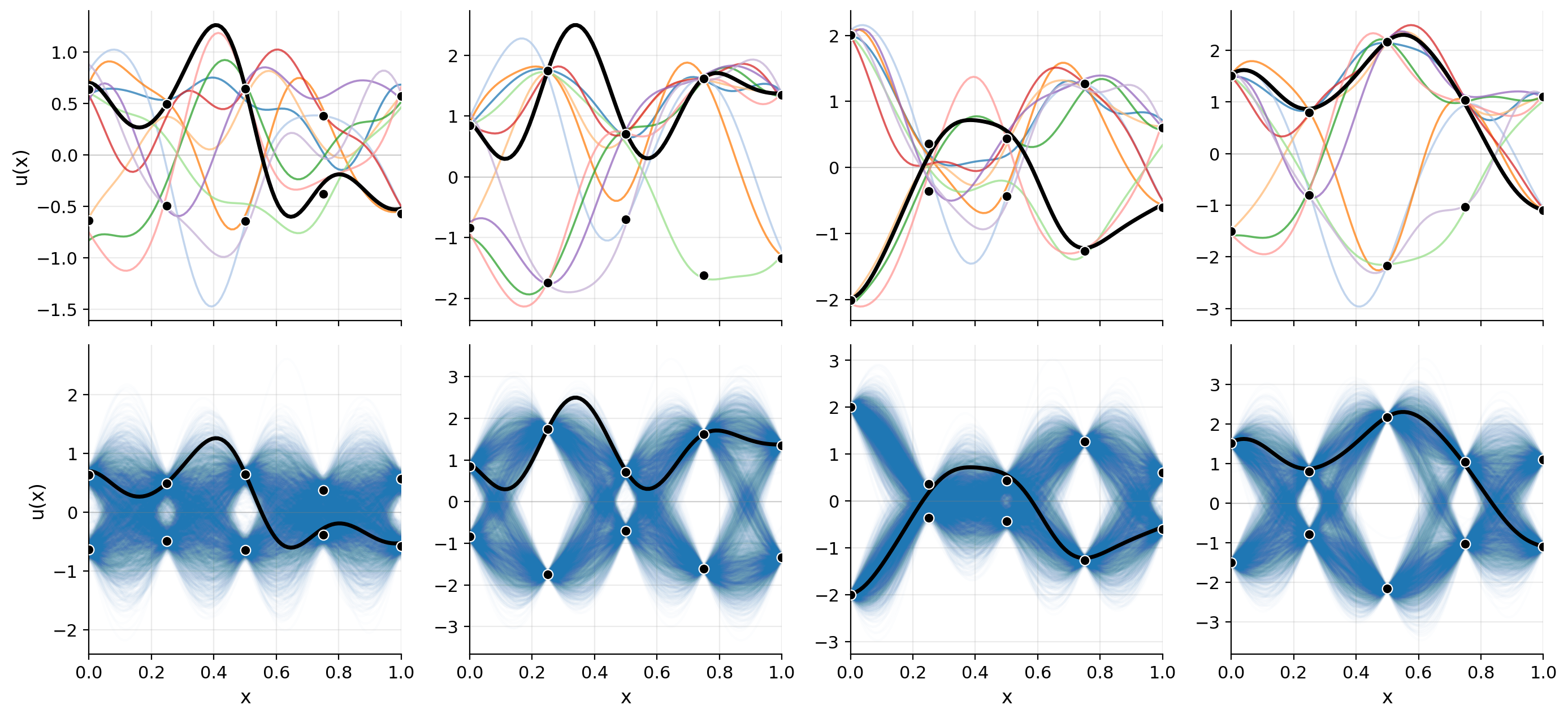}
    \caption{Example posteriors for the squared pointwise observations of a GP, with draws generated using the flow matching model. The black line is the true function $u(x)$; the black dots are at $(x_i, \pm\sqrt{|y_i|})$. Top row: ten draws shown in different colors to illustrate the pathwise distribution. Bottom row: 4000 draws shown in blue to illustrate the marginal distribution at each point \(x\) in space.}
    \label{fig:GP2}
\end{figure}

\begin{figure}[htbp]
    \centering
    \includegraphics[width=\linewidth]{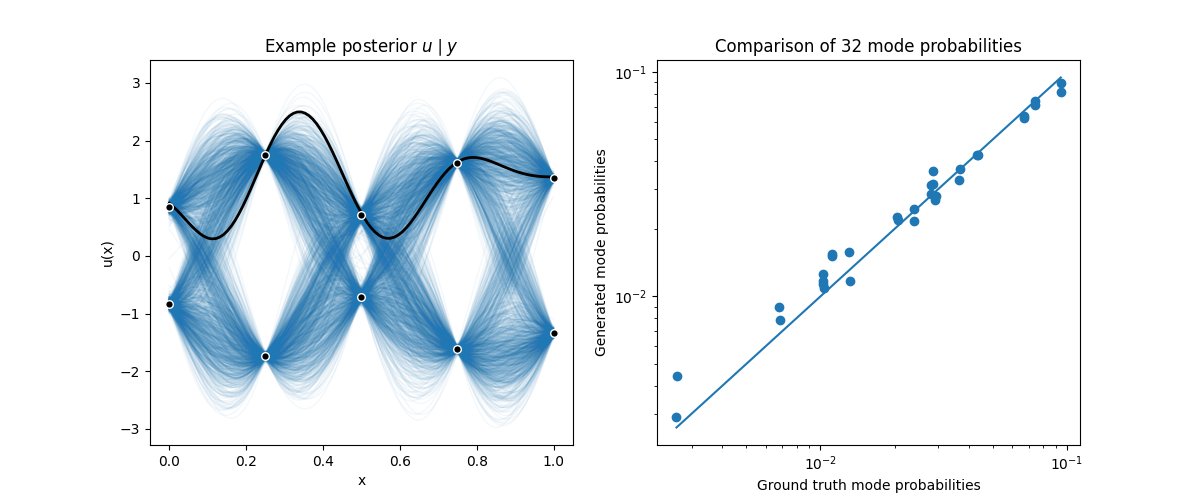}
    \caption{Left: an example posterior distribution with draws in blue and the truth in black. Right: a comparison of the 32 mode-wise probabilities, with numerically computed mode probabilities (treated as ground truth) on the x-axis and the generative model on the y-axis. Assignment of posterior draws to modes is determined by taking the sign of the draw at each of the five observation sites. The ground truth mode probabilities are computed by approximating the posterior as a mixture of Laplace approximations centered at each of the modes, with probabilities given by the relative evidence corrected by importance sampling. These are accurate because the modes are peaked and well separated, and can be enumerated using the knowledge that they correspond roughly to sign patterns.}
    \label{fig:prob-compare}
\end{figure}

\subsection{Nonlinear deconvolution with a jump prior}\label{subsec:jump-deconv}
Let $\Omega = [0,1]$. For each coarse grid
$\{z_i\}_{i=1}^d$ used below, we take
\begin{equation}\label{eqn:special-space}
    \begin{aligned}
        \U = \V = \mathcal{H}_{\psi}
        &:= L^2\!\left(\Omega,\dd t+\sum_{i=1}^d\delta_{z_i}\right),
        \\
        \|u\|_{\mathcal{H}_{\psi}}^2
        &= \|u\|_{L^2(\Omega,\dd t)}^2 + \|\psi(u)\|_2^2.
    \end{aligned}
\end{equation}
This is a separable Hilbert space on which the grid evaluation map
$\psi$ is bounded linear. Let $\mu\in\mathcal{P}(\mathcal{H}_{\psi})$
be the law of the Brownian Poisson jump process with parameters
\(\sigma_{\mathrm{bm}}^2=0.5\), \(\lambda=4\),
\(\sigma_{\mathrm{jump}}^2=1\), and endpoint values drawn
independently from \(\N(0,2.25)\) (see \cref{subsubsec:bpjp}).
We regard each c\`adl\`ag sample path as an element of
$\mathcal{H}_{\psi}$ through its Lebesgue equivalence class and
its actual values at the grid points.
With $M = 25$ observation locations $\{x_i\}_{i=1}^M \subset \Omega$, noise variance $\sigma^2 = 10^{-2}$, and a convolution kernel \(\varphi\) set to
\begin{equation}
    \varphi(\Delta) = \frac{1}{2 \sigma_K \sqrt{\pi}}\, \exp\!\left(-\left(\frac{\Delta}{2\sigma_K}\right)^{\!2}\right), \qquad \sigma_K = 0.1,
\end{equation}
we consider observations, with the convolution taken over $\Omega$,
\begin{equation}
    y_i = \bigl(\varphi * \tanh(3 u)\bigr)(x_i) + \epsilon_i, \qquad i = 1, \ldots, M,
\end{equation}
where $\epsilon_i \iidsim \N(0, \sigma^2)$. The goal is to recover the posterior distribution of \(u\) given the observations $Y = (y_1, \ldots, y_M)$.
We take the dimensionality reduction map $\psi$ to be pointwise evaluation on an equispaced grid, \(\psi(u) := (u(z_1), \ldots, u(z_d)) \in \R^d\), and set \(d = 30\). There is good agreement with a ground truth computed via MCMC, which is slow but mixes well; see \cref{fig:jump-deconv-comparison}. The 24 NUTS chains initialized from independent prior draws converge to a common distribution, with split $\hat{R} \le 1.004$ and bulk effective sample size above $6{,}500$ in every coordinate. The setup and example posteriors are shown in
\cref{fig:jump-deconv-setup,fig:jump-deconv-samples}. Varying the
grid resolution \(d\), the prior-normalized squared kernel Stein discrepancy of the generated
posterior decreases until about \(d = 30\), after which no further improvements are observed, and the increased difficulty of marginal fitting dominates the reduction in class error from approximation in \(\psiclass\) for larger \(d\)
(\cref{fig:jump-deconv-setup}, right). The discrepancy is computed with an inverse multi-quadric kernel \cite{gorham2017measuring} and a median-heuristic bandwidth computed from the prior.

\begin{figure}[htbp]
    \centering
    \begin{minipage}[t]{0.48\linewidth}
        \centering
        \vspace{0pt}
        \includegraphics[width=\linewidth]{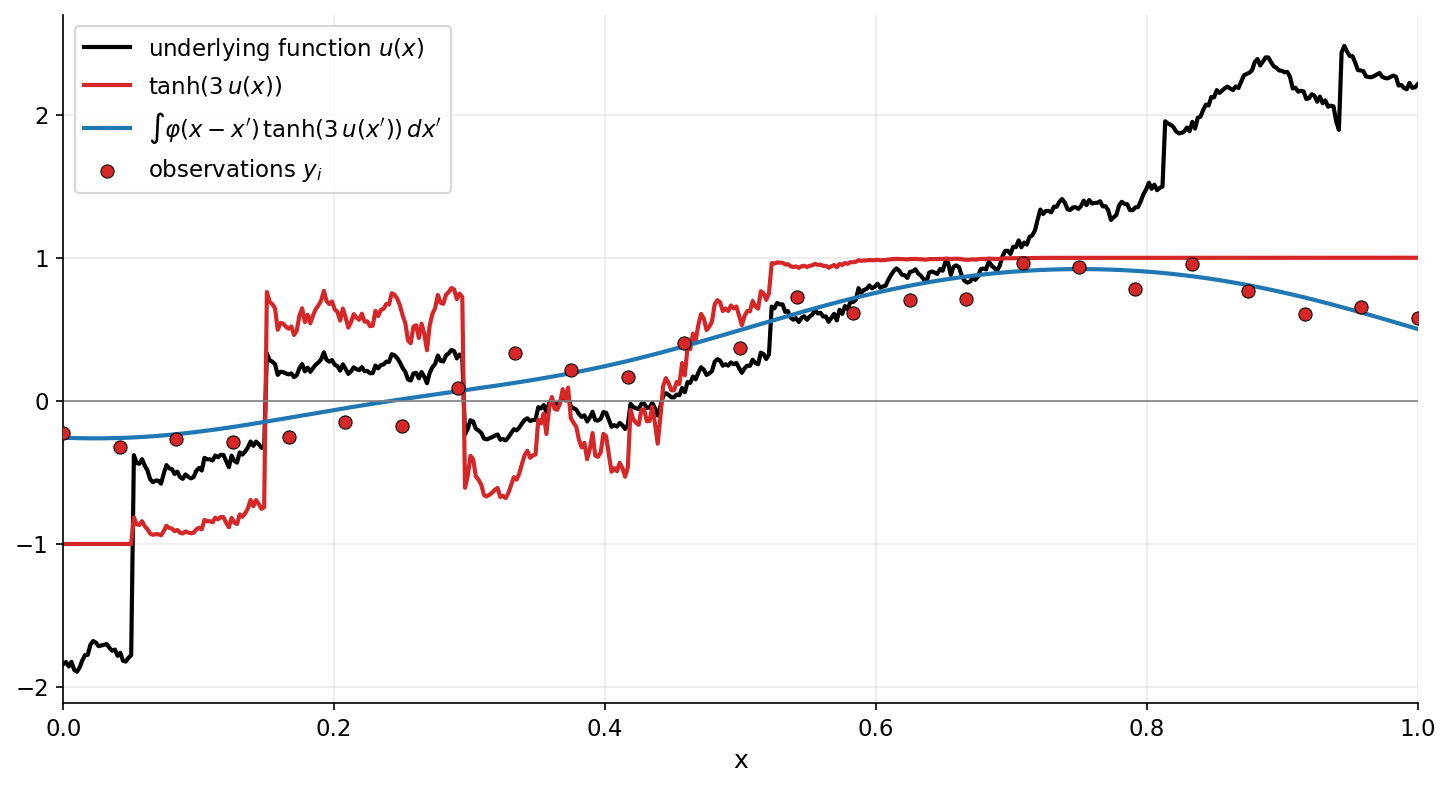}
    \end{minipage}%
    \hfill
    \begin{minipage}[t]{0.48\linewidth}
        \centering
        \vspace{0pt}
        \includegraphics[width=\linewidth]{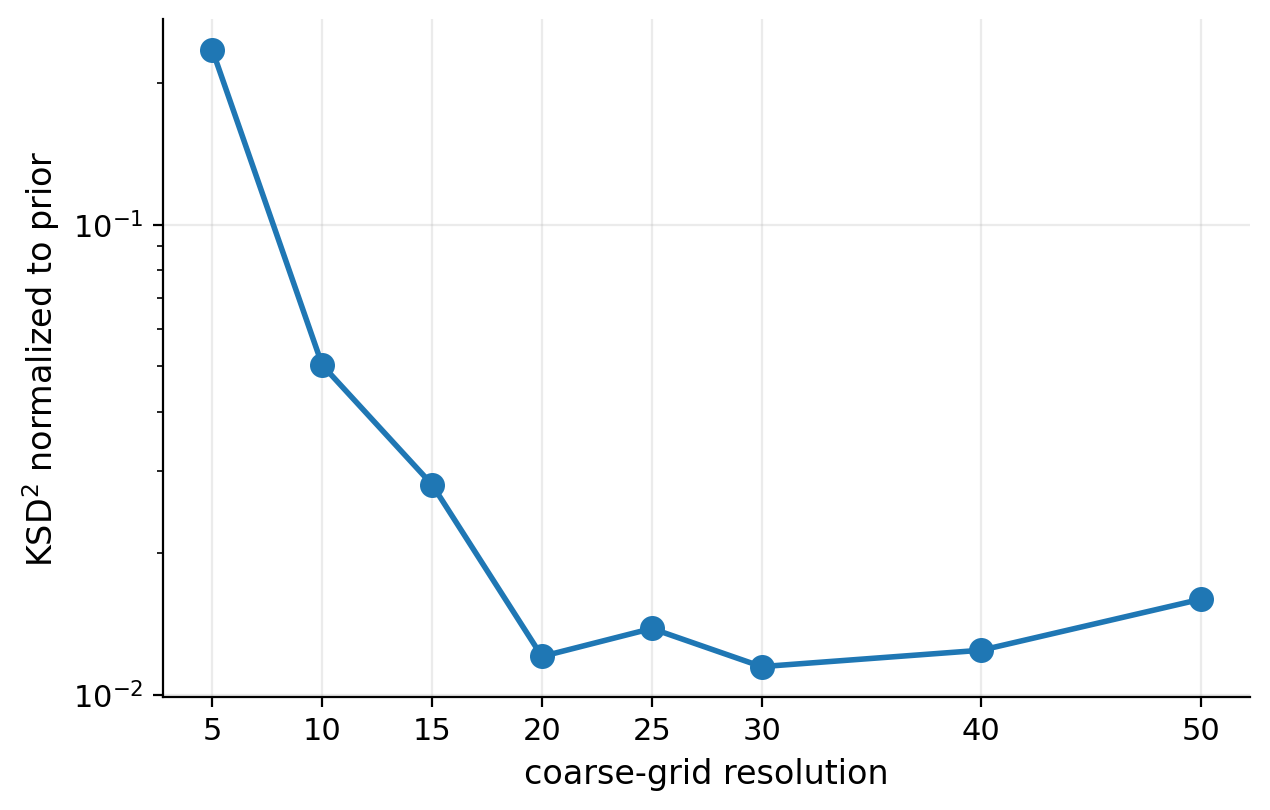}
    \end{minipage}
    \caption{
        Left: problem setup. Underlying draw \(u(x)\) in black, \(\tanh(3 u(x))\) in red, \(\int \varphi(x - x') \tanh(3u(x'))\dd x'\) in blue, and red dots representing noisy samples of \(\int \varphi(x - x') \tanh(3u(x'))\dd x'\) at the observation points.
        Right: the squared kernel Stein discrepancy of the generated posterior,
        normalized by that of the prior as the coarse-grid resolution
        is varied.
    }
    \label{fig:jump-deconv-setup}
\end{figure}

\begin{figure}[htbp]
    \centering
    \includegraphics[width=\linewidth]{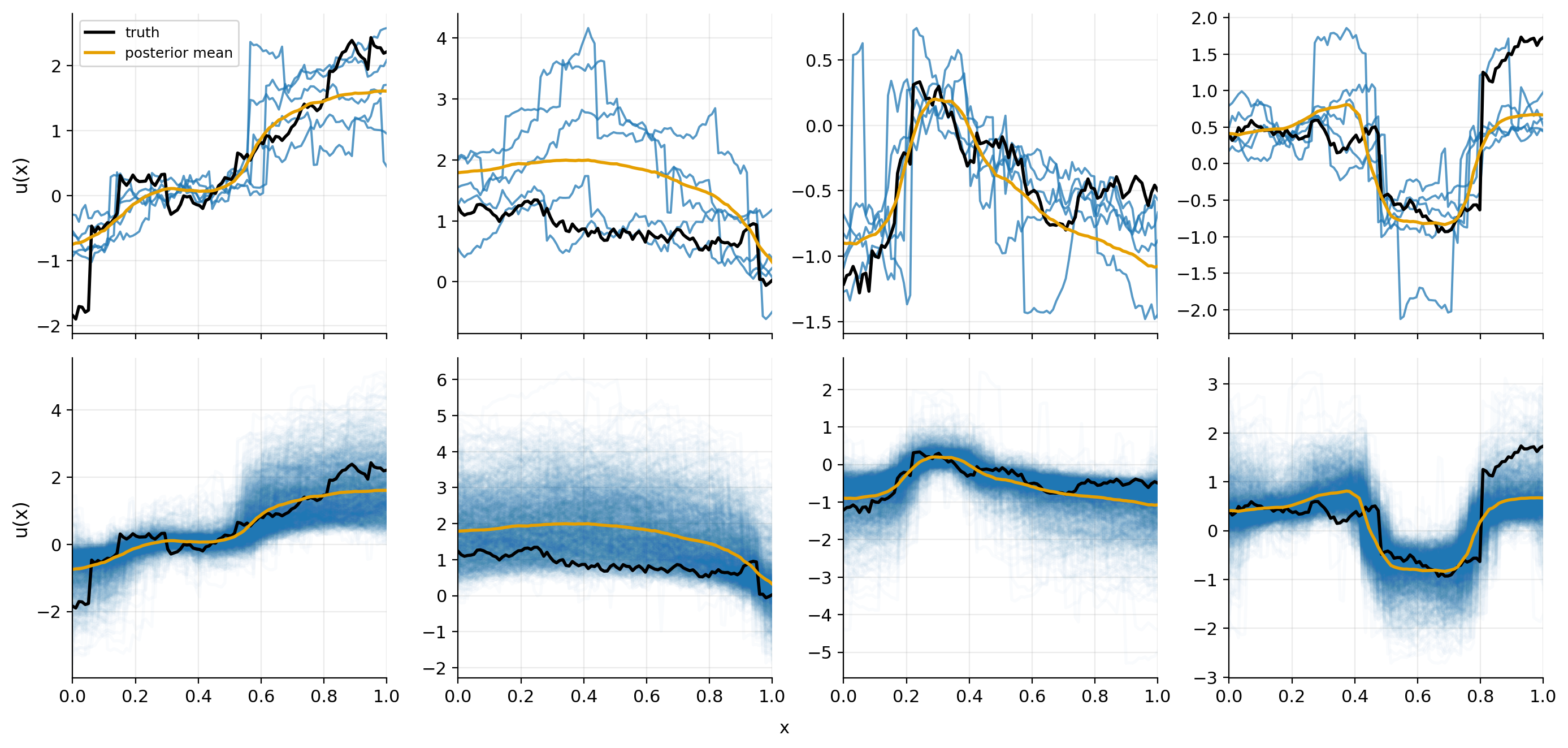}
    \caption{
        Example draws from representative posteriors. The true function \(u(x)\) is shown in black, draws from the posterior are shown in blue, and the posterior mean is shown in orange. Top row: five draws shown to illustrate the pathwise distribution. Bottom row: 1000 draws shown to illustrate the marginal distributions. Draws are computed using the conditional generative model with coarse-grid resolution 30, with the rest of the function sampled from the prior conditioned on these 30 point evaluations.
    }
    \label{fig:jump-deconv-samples}
\end{figure}

\begin{figure}[htbp]
    \centering
    \includegraphics[width=\linewidth]{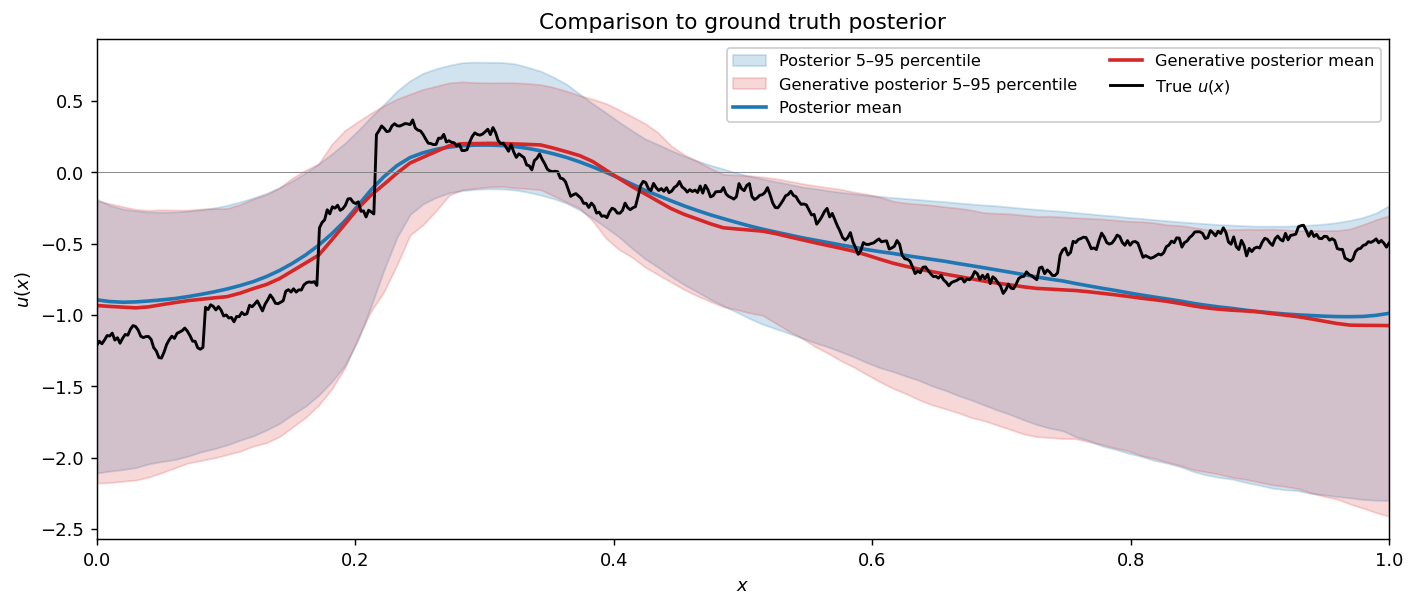}
    \caption{
        Comparison to ground truth posterior computed by MCMC applied to a fine discretization. Underlying draw \(u(x)\) in black, posterior mean in blue and generated posterior mean in red, along with the associated 5--95\% bands. The ground truth posterior is computed using multiple long MCMC chains with the No-U-Turn Sampler \cite{NUTS} implemented with BlackJax \cite{cabezas2024blackjax}.
    }
    \label{fig:jump-deconv-comparison}
\end{figure}

\subsection{Navier--Stokes filtering}\label{subsec:navier-stokes}
We consider posterior inference on the terminal vorticity of a 2D incompressible Navier--Stokes flow on the torus, given a sparse and noisy history of pointwise observations.

Let $\Omega = [0, 2\pi]^2$ with periodic boundary conditions. The vorticity field $u: \Omega \times [0,T] \to \R$ evolves according to the 2D incompressible Navier--Stokes equation in vorticity form,
\begin{equation}\label{eq:nse}
\partial_t u + (v \cdot \nabla) u = \gamma \Delta u + f,
\qquad (x, t) \in \Omega \times (0, T],
\end{equation}
with viscosity $\gamma = 5 \times 10^{-4}$ and forcing $f: \Omega \to \R$ constant in time. The velocity $v: \Omega \times [0,T] \to \R^2$ is the unique divergence-free, zero-mean field with $\mathrm{curl}\, v = u - \bar u$, where $\bar u$ denotes the spatial mean of $u$. The initial condition $u(\cdot, 0)$ is drawn from a Mat\'ern-$\tfrac52$ Gaussian random field on $\Omega$ with length scale $\rho = \pi/5$ and unit pointwise variance; the forcing $f$ is drawn once per trajectory from a band-limited Gaussian field. We take the reference measure $\RefM$ to be the law of the terminal vorticity $u(\cdot, T)$ on $L^2_{\mathrm{per}}(\Omega)$ induced by the dynamics pushing forward the distribution over initial conditions and forcings.

We observe the vorticity at $M = 16$ fixed sensor locations $\{x_m\}_{m=1}^{M} \subset \Omega$ over $K = 60$ uniformly spaced times $t_k = kT/K \in (0, T]$ with $T = 15$,
\begin{equation}\label{eq:nse-obs}
Y_{m,k} = u(x_m, t_k) + \epsilon_{m,k},
\qquad
\epsilon_{m,k} \iidsim \N(0, \sigma_\epsilon^2),
\quad \sigma_\epsilon = 0.05.
\end{equation}
Simulation provides joint draws of the sensor history and the terminal vorticity from $\TargetM$, and the target is the posterior $\TargetM(\cdot \mid Y) = \Law(u(\cdot, T) \mid Y)$. Sensor locations are the same across trajectories; only the noise and the underlying flow differ. Since the observation window ends at the inference time $T$, the target is the filtering distribution at time $T$. See \cref{fig:ns-setup}.

We take the dimensionality reduction map $\psi := \mathrm{Coarsen}_{16 \times 16} : L^2_{\mathrm{per}}(\Omega) \to \R^{16 \times 16}$, which replaces a field by its averages over the cells of a uniform $16 \times 16$ partition of $\Omega$, computed from values on the $128 \times 128$ simulation grid. Since $\RefM$ has no closed-form description, exact sampling from $\RefM(\cdot \mid \psi(u) = w)$ is intractable, so we follow \cref{subsubsec:aux-gen} and train two models: a reference conditional model whose samples approximate $\RefM(\cdot \mid \psi(u) = w)$, and a marginal model $\hat{\eta}_\psi(w \mid Y) \approx \TargetM_\psi(\cdot \mid Y)$. We denote the law of the reference conditional model's samples by $\hat{\RefM}(\cdot \mid w)$.

The approximation is built in two stages, with separately trained networks:
\begin{enumerate}
    \item The marginal model targets the coarse-grid values $w = \psi(u(\cdot, T))$ and generates \(\hat{\eta}_\psi(w \mid Y)\), conditioned on the sensor history $Y$.
    \item The reference conditional model targets the fine-grid values $u(\cdot, T)$, generates \(\hat{\RefM}(\cdot \mid w)\) conditioned on the coarse-grid values, and has no access to the sensor measurements \(Y\).
\end{enumerate}
The two-stage sampler then produces posterior samples of $u(\cdot, T) \mid Y$ by drawing $\hat{w} \sim \hat{\eta}_\psi(\cdot \mid Y)$ from the marginal model, then $\hat{u} \sim \hat{\RefM}(\cdot \mid \hat{w})$ from the reference conditional model.

Both models are FiLM-conditioned U-Nets trained by conditional flow
matching \cite{dynamic_cot,lipman2023flow,albergo2025stochastic} on
$39{,}000$ simulated trajectories. The marginal model
($\approx 0.6$M parameters) operates at the coarse $16\times16$
resolution, conditioned on the sensor history through a temporal
convolutional encoder and broadcast conditioning channels. The reference conditional model
($\approx 1.0$M parameters) operates at the fine $128\times128$
resolution, conditioned on $w$ through upsampled input channels.

At the coarse level, the marginal model achieves a continuous ranked
probability score (CRPS) of $0.30$, with a $66\%$ variance reduction
relative to the prior, that is, a mean squared error $66\%$ below that of
prediction by the prior mean. The CRPS is a proper scoring rule comparing
predicted marginal distributions with realized values
\cite{gneiting2007strictly}. Given the true coarse field $w$, the
reference conditional model attains a fine-grid mean squared error of
$0.026$. For the two-stage sampler, evaluated on $200$ test
trajectories with $64$ posterior samples each, the fine-grid CRPS is
$0.36$, the mean squared error is $0.42$, and the variance reduction
is $59\%$. The MSE is that of prediction using the posterior sample mean, and both metrics are averaged over pixels and test trajectories. The gap between the error given the true $w$ ($0.026$) and
that of the two-stage sampler ($0.42$) largely reflects the
posterior uncertainty in $w \mid Y$. Quantifying this uncertainty is
part of the inference problem.
In terms of calibration, the empirical coverage of the
pixelwise credible intervals deviates from the nominal level by
\(4.2\%\), averaged over coverage levels and pixels.
The two-stage sampler's posterior samples track the power spectrum of the truth (\cref{fig:ns-coarse-sweep}, right).

\cref{fig:ns-posterior} shows posterior samples and summaries for three test trajectories.

We further probe the role of $\psi$ by varying the coarse resolution, replacing $\psi$ with $\psi_d := \mathrm{Coarsen}_{d \times d} : L^2_{\mathrm{per}}(\Omega) \to \R^{d \times d}$ and sweeping $d$, retraining both models at each value.
\cref{fig:ns-coarse-sweep}~(left) shows the end-to-end CRPS across the
sweep, which steadily improves up to $d = 16$ with only a small marginal gain at \(d = 32\). 

\begin{figure}[htbp]
    \centering
    \includegraphics[width=0.95\linewidth]{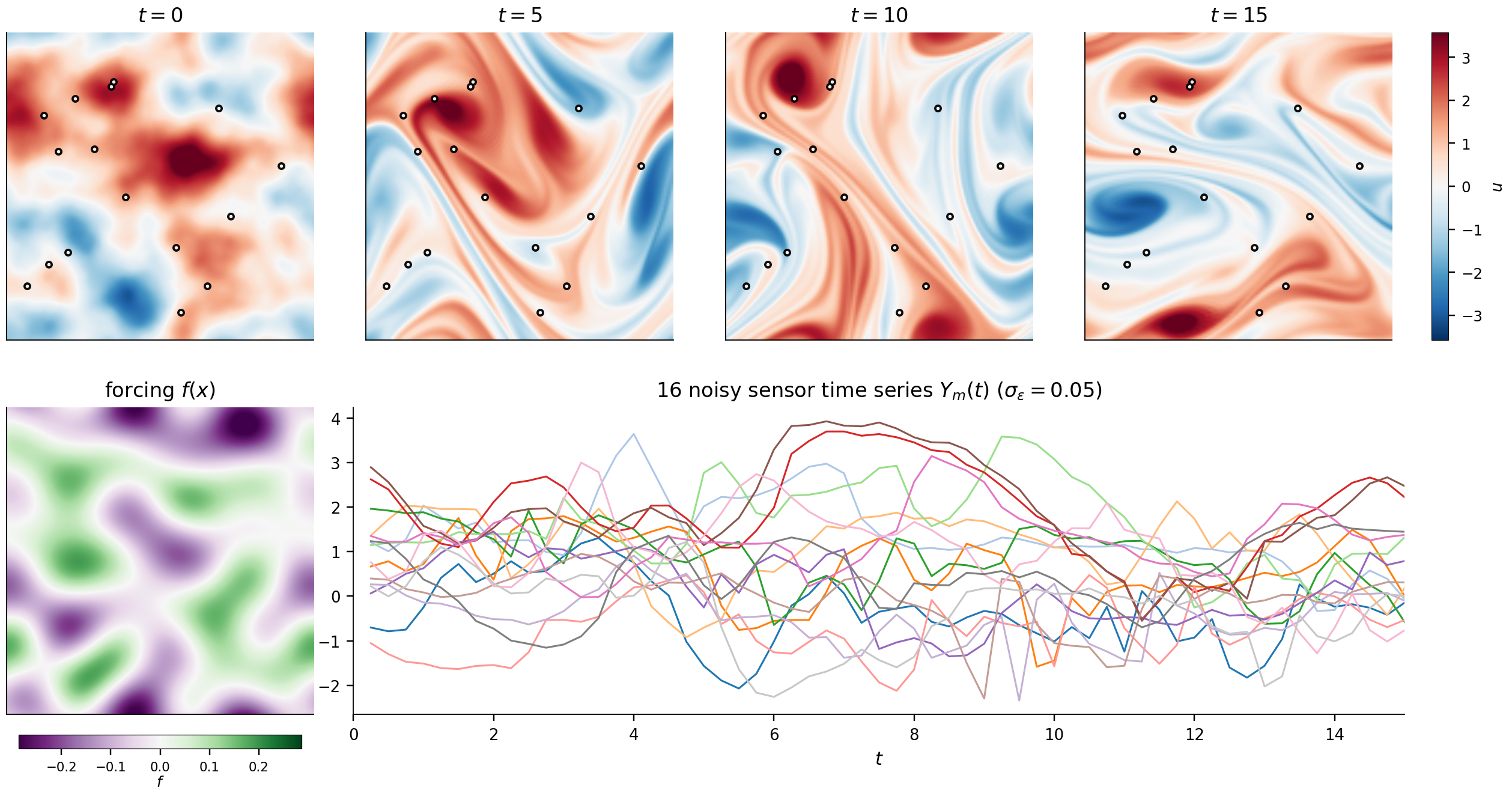}
    \caption{
        Navier--Stokes setup. Top row: snapshots of $u(\cdot, t)$ at $t \in \{0, 5, 10, 15\}$ for one trajectory, with the $M = 16$ fixed sensor locations marked. Bottom left: the time-constant forcing $f$ for this trajectory. Bottom right: the $16$ noisy sensor time series $Y_{m,\cdot}$ over $[0, T]$.
    }
    \label{fig:ns-setup}
\end{figure}

\begin{figure}[htbp]
    \centering
    \includegraphics[width=0.95\linewidth]{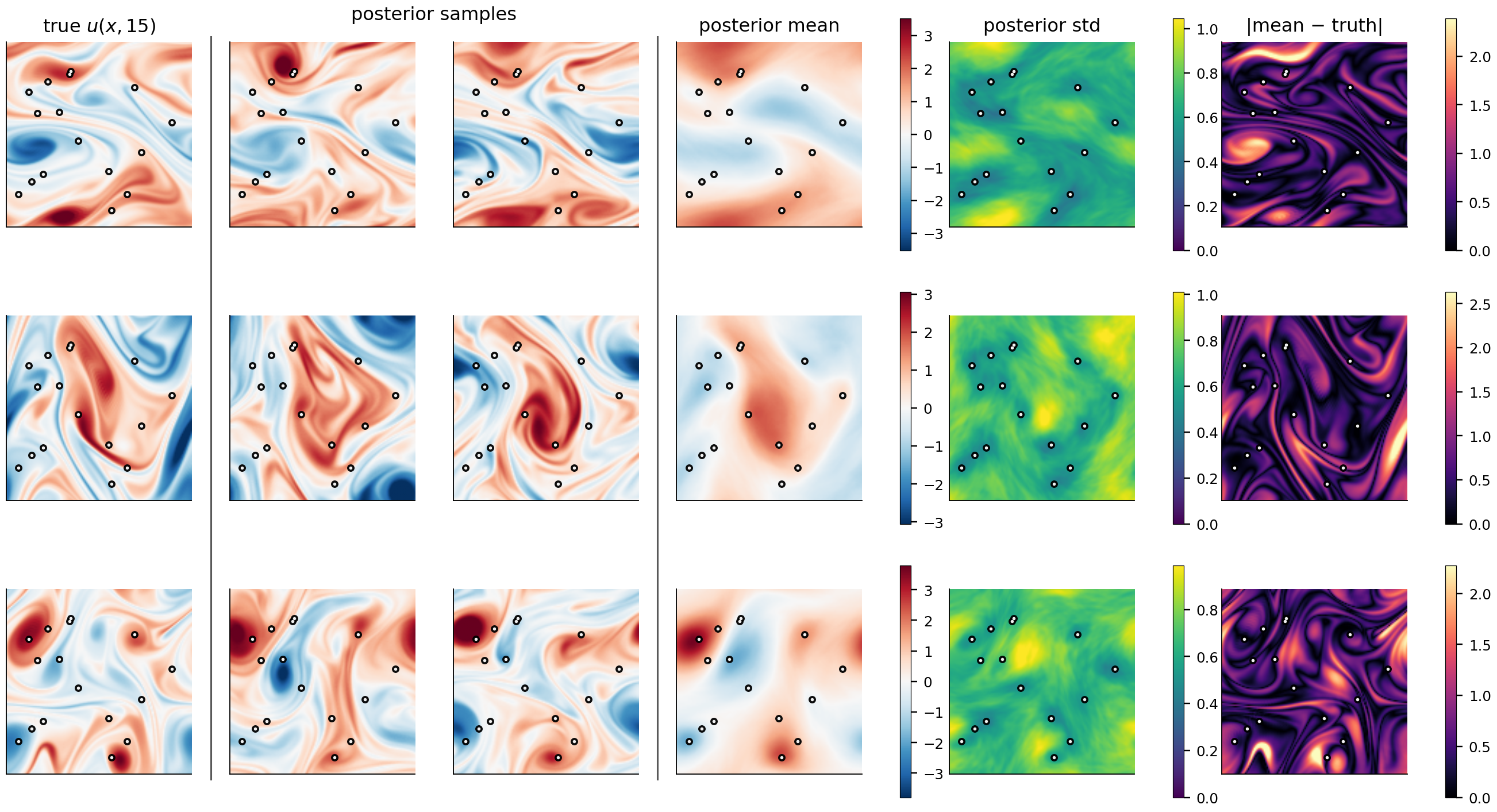}
    \caption{
        Posterior samples on three test trajectories (rows). Columns, left to right: true terminal vorticity $u(\cdot, T)$, two posterior draws from the two-stage sampler, posterior mean, pointwise posterior standard deviation, and pixelwise $|\text{mean} - \text{truth}|$, with sensor locations marked.
    }
    \label{fig:ns-posterior}
\end{figure}

\begin{figure}[htbp]
    \centering
    \begin{minipage}[t]{0.48\linewidth}
        \centering
        \vspace{0pt}
        \includegraphics[width=\linewidth]{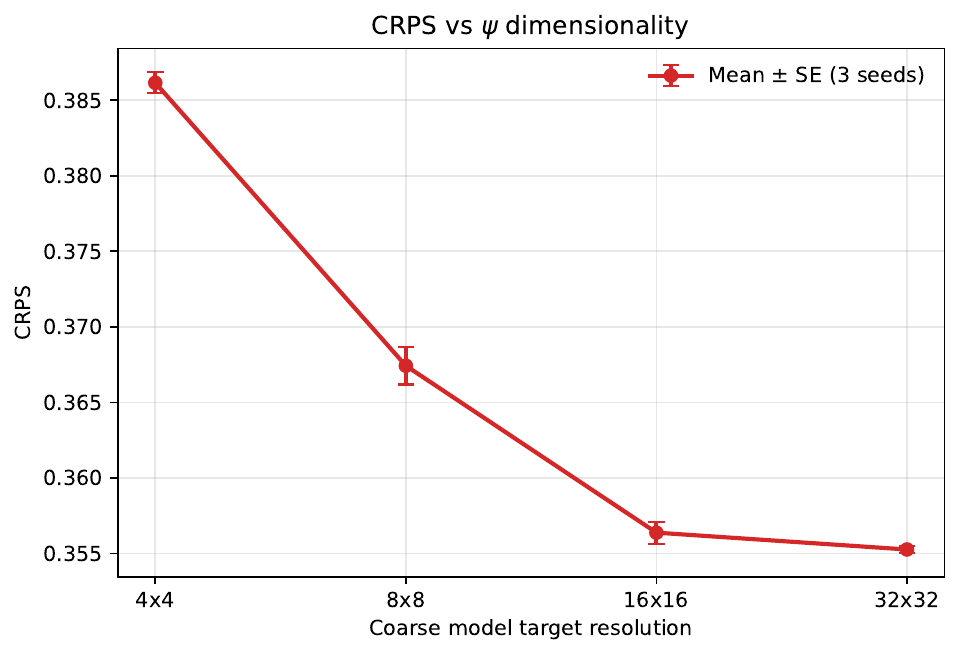}
    \end{minipage}%
    \hfill
    \begin{minipage}[t]{0.48\linewidth}
        \centering
        \vspace{0pt}
        \includegraphics[width=\linewidth]{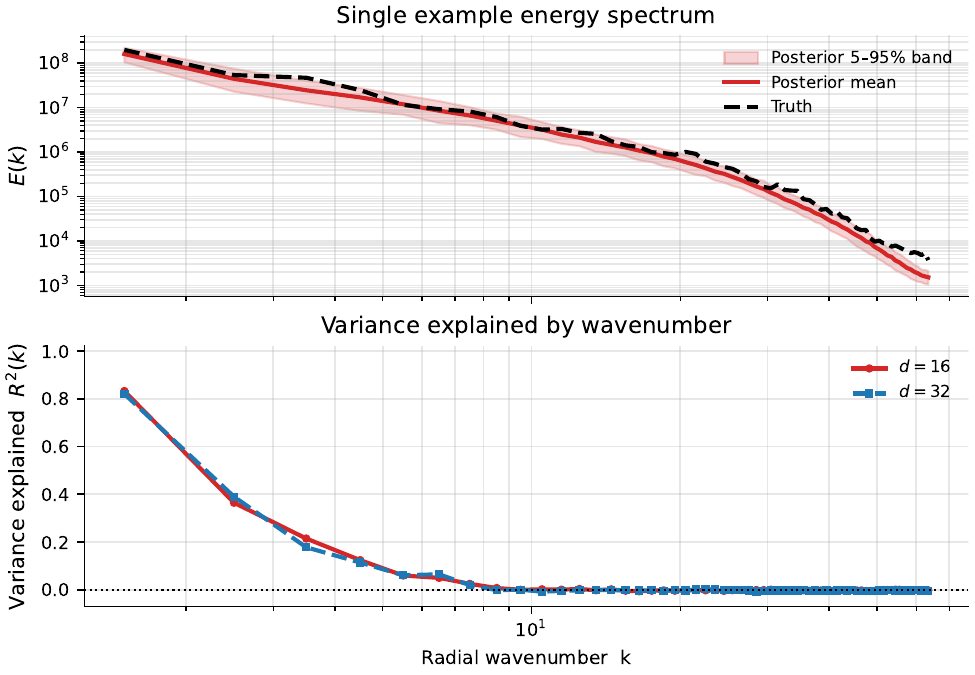}
    \end{minipage}
    \caption{
        Left: end-to-end CRPS at the fine resolution as a function of the coarse target resolution $d \times d$, sweeping $d \in \{4, 8, 16, 32\}$. Both networks were retrained at each value. The marginal model \(\hat{\eta}_{\psi}(w \mid y)\) was trained  on three different random seeds for each resolution, and we show the mean and \(\pm 1\) SE across these seeds. CRPS decreases between $d = 4$ and $d = 16$, with only a small further gain at $d = 32$.
        Right: radially averaged power spectrum of the posterior samples (mean and $5$--$95\%$ band) against the truth for a single example at \(d = 16\) (top), and the per-wavenumber variance-explained statistic $R^2(k)$ (bottom) for models with coarse resolution \(d = 16\) and \(d = 32\). The explained variance is concentrated at low wavenumbers and falls to near zero by $k \approx 8$ for both models, suggesting that wavenumbers above this are no longer informed by the data. Since the $d = 16$ coarse field resolves wavenumbers up to $k = 8$ and a $d = 32$ field up to $k = 16$, the absence of any gain at $d = 32$ for high wavenumbers indicates that this cutoff reflects the data rather than the choice of $\psi$.
    }
    \label{fig:ns-coarse-sweep}
\end{figure}

\section{Error analysis}\label{sec:analysis}

We now consider error analysis for approximations based on the marginalized measure \(\pma\). The measures produced by
\cref{alg:reduced-gen,alg:reduced-cond-gen} combine a
generative model fitted to the \(d\)-dimensional marginal \(\psi \sharp
\TargetM\) with the conditionals of the reference measure. We
decompose the error into the quality of the marginal approximation and
an irreducible error associated with approximation by measures in
\(\psiclass\). In \cref{subsec:wasserstein} we bound Wasserstein
distances through a nested OT problem, assuming stability of conditionals with respect to perturbations of the conditioning variable.
In \cref{subsec:PMA-error}, we consider a natural decomposition of the Kullback--Leibler divergence which splits into marginal error and irreducible error. We develop bounds on the irreducible part based on low-order conditional moments of the reference measure.
In \cref{subsec:functional}, we consider cases where the reference
conditionals satisfy functional inequalities, recovering the
subspace-selection diagnostic of likelihood-informed subspace methods.
In each case the infinite-dimensional structure is carried by the
reference conditionals, and the error from the marginal term
is that of a finite-dimensional generative modeling problem.
Convergence guarantees for generative models on \(\R^d\) therefore
transfer directly to the function-space approximation.

We note that the Kullback--Leibler divergence does not depend on the norm defining the ambient space
\(\U\), and neither does the class \(\psiclass\), while the upper bounds of
this section do. An ambient norm enters as the metric of the Wasserstein
distances in \cref{subsec:wasserstein}, through the Lipschitz
constants and conditional moments in \cref{subsec:PMA-error}, and
through the gradient norms in \cref{subsec:functional}. Each bound
holds for any norm under which its hypotheses are satisfied, so for
the Kullback--Leibler bounds, the norm is a free parameter, with weaker
norms demanding more smoothness from the likelihood and less from the reference.
However, the weighted inequalities of \cref{rem:weighted-fi} absorb this dependence
into an operator \(\Gamma\) which depends on the reference measure,
giving bounds that are invariant to the choice of ambient space.

Throughout this section, we adopt the normalization \(\einf_\RefM(\Phi) = 0\), so that \(\dd\TargetM/\dd\RefM = e^{-\Phi}/Z \leq 1/Z\).
\subsection{Wasserstein bounds}\label{subsec:wasserstein}

We bound Wasserstein distances between a measure \(\eta \in \psiclass\)
and the target \(\TargetM\). The bound has the structure of a nested
OT problem, consisting of an outer coupling of the \(\psi\)-marginals on
\(\R^d\), whose transport cost is itself an OT distance between conditionals. When the marginal measures match, this reduces to a conditional OT problem \cite{hosseini2025conditional,chemseddine2025conditional,manupriya2024consistent,baptista2025knothe,baptista2025conditionalSimulation}.

\begin{proposition}[Nested optimal transport bound]\label{prop:Wp-nested}
    Let \(p \in [1,\infty)\), and let \(\eta, \TargetM \in \mathcal{P}(\U)\)
    have finite \(p\)-th moment. Then
    \begin{equation}\label{eqn:nested-ot}
        W_{p}^{p}(\eta,\TargetM)
        \leq \inf_{\pi_{\psi}\in\Pi(\eta_{\psi},\TargetM_{\psi})}
        \int_{\R^d \times \R^d} W_{p}^{p}\left(\eta^w,\TargetM^z\right)
        \pi_{\psi}(\dd w, \dd z).
    \end{equation}
    In particular, if \(\eta \in \psiclass\), then
    \begin{equation}\label{eqn:nested-ot-psi}
        W_{p}^{p}(\eta,\TargetM)
        \leq \inf_{\pi_{\psi}\in\Pi(\eta_{\psi},\TargetM_{\psi})}
        \int_{\R^d \times \R^d} W_{p}^{p}\left(\RefM^w,\TargetM^z\right)
        \pi_{\psi}(\dd w, \dd z).
    \end{equation}
\end{proposition}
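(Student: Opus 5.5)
The plan is to prove \cref{eqn:nested-ot} by gluing: from any coupling of the \(\psi\)-marginals and, for each pair \((w,z)\), an optimal coupling of the conditionals \(\eta^w\) and \(\TargetM^z\), we assemble a coupling of \(\eta\) and \(\TargetM\) whose cost equals the right-hand side. Fix \(\pi_\psi\in\Pi(\eta_\psi,\TargetM_\psi)\); we may assume the integral \(\int W_p^p(\eta^w,\TargetM^z)\,\pi_\psi(\dd w,\dd z)\) is finite, since otherwise there is nothing to show. For each \((w,z)\) choose an optimal coupling \(\gamma^{w,z}\in\Pi(\eta^w,\TargetM^z)\), which exists because \(\U\) is Polish and the cost \(\|u-v\|^p\) is lower semicontinuous. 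Then define
\[
\gamma(\dd u,\dd v) := \int_{\R^d\times\R^d}\gamma^{w,z}(\dd u,\dd v)\,\pi_\psi(\dd w,\dd z).
\]

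The marginal check is direct from the disintegrations. The first marginal of \(\gamma\) is \(\int \eta^w\,\pi_\psi(\dd w,\dd z)=\int\eta^w\,\eta_\psi(\dd w)=\eta\), and symmetrically the second marginal is \(\TargetM\). Hence \(\gamma\in\Pi(\eta,\TargetM)\), and
\[
W_p^p(\eta,\TargetM)\le\int\|u-v\|^p\,\dd\gamma=\int W_p^p(\eta^w,\TargetM^z)\,\pi_\psi(\dd w,\dd z).
\]
Taking the infimum over \(\pi_\psi\) gives \cref{eqn:nested-ot}. For \cref{eqn:nested-ot-psi}, note that when \(\eta\in\psiclass\) we have \(\eta^w=\RefM^w\) for \(\eta_\psi\)-a.e. \(w\). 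Because the first marginal of \(\pi_\psi\) is \(\eta_\psi\), this equality holds \(\pi_\psi\)-a.e., so we may substitute \(\RefM^w\) for \(\eta^w\) inside the integral.

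The main obstacle is measurability. Two things must be verified: that \((w,z)\mapsto W_p^p(\eta^w,\TargetM^z)\) is measurable, and that \((w,z)\mapsto\gamma^{w,z}\) can be chosen as a measurable kernel, so that \(\gamma\) is a well-defined measure.

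For the first point, the kernels \(w\mapsto\eta^w\) and \(z\mapsto\TargetM^z\) are measurable into \(\mathcal P(\U)\) with the weak topology. Moreover \(W_p\) is lower semicontinuous on \(\mathcal P(\U)\times\mathcal P(\U)\), hence Borel, so the composition is measurable.

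For the second point, I would first invoke the measurable selection of optimal plans, e.g. Villani's \emph{Optimal Transport: Old and New}, Corollary 5.22, which provides a measurable map \((\mu_1,\mu_2)\mapsto\gamma_{\mu_1,\mu_2}\) choosing an optimal coupling. Composing this map with the conditional kernels gives the measurable kernel \((w,z)\mapsto\gamma^{w,z}\).

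If one wants to avoid that result, an alternative is to work with \(\varepsilon\)-optimal couplings and a countable approximation argument. The finite \(p\)-th moment assumption enters here: it guarantees that \(W_p^p(\eta^w,\TargetM^z)<\infty\) for almost every pair, so the integrals involved are well defined.
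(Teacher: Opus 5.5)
Your proposal is correct and follows the paper's proof essentially verbatim: it glues measurably selected optimal conditional couplings (via Villani, Corollary~5.22) along an arbitrary coupling $\pi_\psi$ of the marginals, checks the marginals through the disintegrations, takes the infimum, and substitutes $\RefM^w$ for $\eta^w$ since they agree $\pi_\psi$-a.e. Your extra measurability remarks on the conditional kernels and the lower semicontinuity of $W_p$ supply details the paper leaves implicit.
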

\begin{proof}
    Let \(\pi_\psi \in \Pi(\eta_{\psi},\TargetM_{\psi})\) be an arbitrary
    coupling, and for each \((w,z) \in \R^d \times \R^d\), let
    \(\pi^{w,z} \in \Pi(\eta^w, \TargetM^z)\) be an optimal coupling for
    \(W_p\), chosen so that \((w,z) \mapsto \pi^{w,z}\) is measurable
    \cite[Corollary~5.22]{villani2009optimal}.
    Define the measure
    \begin{equation}
        \pi(\dd u, \dd u') :=
        \int_{\R^d\times\R^d} \pi^{w,z}(\dd u, \dd u')\pi_\psi(\dd w, \dd z)
    \end{equation}
    on \(\U \times \U\). Disintegrating along the fibers of \(\psi\), the first
    marginal of \(\pi\) is \(\int \eta^w \eta_\psi(\dd w) = \eta\), and
    similarly the second marginal is \(\TargetM\), so
    \(\pi \in \Pi(\eta, \TargetM)\). Therefore
    \begin{equation}
        W_p^p(\eta, \TargetM)
        \leq \int_{\U\times\U} \|u - u'\|^p \pi(\dd u, \dd u')
        = \int_{\R^d\times\R^d} W_p^p\left(\eta^w, \TargetM^z\right)
        \pi_\psi(\dd w, \dd z),
    \end{equation}
    and taking the infimum over \(\pi_\psi\) gives \cref{eqn:nested-ot}.
    When \(\eta \in \psiclass\), we have \(\eta^w = \RefM^w\) for
    \(\eta_\psi\)-a.e.\ \(w\), giving \cref{eqn:nested-ot-psi}.
\end{proof}

In \cref{eqn:nested-ot-psi}, the choice of \(\eta \in \psiclass\) only affects
the outer coupling through the marginal \(\eta_\psi\), while the inner cost
\(W_p^p(\RefM^w, \TargetM^z)\) is fixed by the reference and target. For the
marginalized measure \(\tilde\eta\) of \cref{eqn:opt-forward}, the outer
problem degenerates: \(\tilde\eta_\psi = \TargetM_\psi\), so the identity
coupling is admissible and the bound collapses to an average of conditional
transport costs along the diagonal.

\begin{corollary}\label{cor:marginal-Wp}
    Let \(\tilde\eta\) be the marginalized measure of \cref{eqn:opt-forward}
    and \(p \in [1,\infty)\). Then
    \begin{equation}
        W_{p}^{p}(\tilde\eta, \TargetM)
        \leq \EE_{W\sim\TargetM_\psi}
        \left[W_{p}^{p}\left(\RefM^W, \TargetM^W\right)\right].
    \end{equation}
\end{corollary}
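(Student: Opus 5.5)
The corollary follows directly from \cref{prop:Wp-nested}, applied with \(\eta = \tilde\eta\) and the diagonal coupling of the marginals. By \cref{prop:best-approx}, \(\tilde\eta \in \psiclass\) and \(\tilde\eta_\psi = \psi\sharp\TargetM = \TargetM_\psi\). We are therefore in the setting of \cref{eqn:nested-ot-psi}.

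First, we check the moment hypothesis. If \(\EE_{W\sim\TargetM_\psi}[W_p^p(\RefM^W,\TargetM^W)] = \infty\), the claim is trivial. Otherwise, fix a base point \(u_0\) and use \(\|u-u_0\|^p \le 2^{p-1}(\|u-u'\|^p + \|u'-u_0\|^p)\) under an optimal coupling of \(\RefM^w\) and \(\TargetM^w\). Integrating against \(\TargetM_\psi(\dd w)\) bounds the \(p\)-th moment of \(\tilde\eta = \int \RefM^w\,\TargetM_\psi(\dd w)\) by \(2^{p-1}\) times the right-hand side plus the \(p\)-th moment of \(\TargetM\). Hence \(\tilde\eta\) has a finite \(p\)-th moment and \cref{prop:Wp-nested} applies. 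Alternatively, the coupling construction in the proof of \cref{prop:Wp-nested} can be repeated verbatim without using the moment assumption, since it only requires the glued measure to be a coupling.

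Next, take the admissible coupling \(\pi_\psi := (\mathrm{Id},\mathrm{Id})\sharp\TargetM_\psi \in \Pi(\tilde\eta_\psi,\TargetM_\psi)\), which is supported on the diagonal \(\{w=z\}\). Since the infimum in \cref{eqn:nested-ot-psi} is at most the value at this coupling,
\begin{equation*}
    W_p^p(\tilde\eta,\TargetM) \le \int_{\R^d\times\R^d} W_p^p(\RefM^w,\TargetM^z)\,\pi_\psi(\dd w,\dd z) = \int_{\R^d} W_p^p(\RefM^w,\TargetM^w)\,\TargetM_\psi(\dd w),
\end{equation*}
which is the stated bound.

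The only technical point is measurability of \(w \mapsto W_p^p(\RefM^w,\TargetM^w)\). This follows from measurability of the kernels together with lower semicontinuity of \(W_p\), or equivalently from the measurable selection of optimal couplings already invoked in \cref{prop:Wp-nested}. It is therefore not a real obstacle.
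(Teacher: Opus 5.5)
Your proof is correct and matches the paper's: substitute the diagonal coupling \((\mathrm{Id},\mathrm{Id})\sharp\TargetM_\psi\) into \cref{eqn:nested-ot-psi}, which is admissible because \(\tilde\eta_\psi = \TargetM_\psi\). Your moment and measurability checks are more careful than the paper's one-line proof; note that the first moment argument tacitly needs \(\TargetM\) to have a finite \(p\)-th moment, while your alternative of redoing the gluing directly avoids this.
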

\begin{proof}
    Since \(\tilde\eta_\psi = \TargetM_\psi\), the identity coupling
    \((\mathrm{Id},\mathrm{Id})\sharp\TargetM_\psi
    \in \Pi(\tilde\eta_\psi, \TargetM_\psi)\) is admissible in
    \cref{eqn:nested-ot-psi}, and the outer transport cost concentrates on
    the diagonal \(w = z\).
\end{proof}

The identity coupling in \cref{cor:marginal-Wp} is available because the marginalized measure matches \(\TargetM_\psi\) exactly. When the marginal is instead \emph{fitted}, as in \cref{alg:reduced-gen,alg:reduced-cond-gen}, the nested structure of \cref{prop:Wp-nested} propagates the marginal fitting error through the lift, provided the reference conditionals vary smoothly with the conditioning value.

\begin{proposition}[Stability of the lift]\label{prop:lift-stability}
    Let \(p \in [1,\infty)\) and \(\eta \in \psiclass\) and suppose the reference conditionals are \(K\)-Lipschitz in Wasserstein distance, that is, \(W_p(\RefM^w, \RefM^z) \leq K\|w - z\|\) for all \(w, z \in \operatorname{supp}\RefM_\psi\).
    Then
    \begin{equation*}
        W_p(\eta, \TargetM)
        \leq K W_p(\eta_\psi, \TargetM_\psi)
        + \left(\EE_{W\sim\TargetM_\psi}\left[W_p^p\left(\RefM^W, \TargetM^W\right)\right]\right)^{1/p}.
    \end{equation*}
\end{proposition}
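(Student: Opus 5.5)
The plan is to begin from the nested bound \cref{eqn:nested-ot-psi} of \cref{prop:Wp-nested} and split the inner cost with the triangle inequality for \(W_p\). Take \(\pi_\psi \in \Pi(\eta_\psi, \TargetM_\psi)\) to be an optimal coupling for \(W_p(\eta_\psi,\TargetM_\psi)\); it exists since both marginals are probability measures on \(\R^d\) with finite \(p\)-th moments, these being inherited from \(\eta,\TargetM\) through \(\psi\), at least in the linear/Lipschitz case. For \(\pi_\psi\)-a.e. \((w,z)\), insert the intermediate measure \(\RefM^z\):
\begin{equation*}
    W_p(\RefM^w, \TargetM^z) \leq W_p(\RefM^w, \RefM^z) + W_p(\RefM^z, \TargetM^z) \leq K\|w - z\| + W_p(\RefM^z, \TargetM^z).
\end{equation*}
Here the Lipschitz hypothesis is applied with \(w \in \operatorname{supp}\RefM_\psi\) and \(z \in \operatorname{supp}\RefM_\psi\). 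This holds \(\pi_\psi\)-a.e.: \(\eta_\psi(\operatorname{supp}\RefM_\psi)=1\) by the definition of \(\psiclass\). For \(\TargetM_\psi\) we need \(\TargetM_\psi \ll \RefM_\psi\), or at least concentration on the support. This is automatic in the standing setting \(\dd\TargetM \propto e^{-\Phi}\dd\RefM\), and I would note it as an implicit assumption.

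Next, take the \(p\)-th root of \cref{eqn:nested-ot-psi}, so that the right-hand side becomes the \(L^p(\pi_\psi)\) norm of the function \((w,z)\mapsto W_p(\RefM^w,\TargetM^z)\). Apply Minkowski's inequality in \(L^p(\pi_\psi)\) to the pointwise bound above:
\begin{equation*}
    W_p(\eta,\TargetM) \leq K\left(\int \|w-z\|^p \pi_\psi(\dd w,\dd z)\right)^{1/p} + \left(\int W_p^p(\RefM^z,\TargetM^z)\,\pi_\psi(\dd w,\dd z)\right)^{1/p}.
\end{equation*}
The first term equals \(K W_p(\eta_\psi,\TargetM_\psi)\) by optimality of \(\pi_\psi\). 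The integrand of the second depends only on \(z\), whose marginal under \(\pi_\psi\) is \(\TargetM_\psi\), so it equals \(\left(\EE_{W\sim\TargetM_\psi}[W_p^p(\RefM^W,\TargetM^W)]\right)^{1/p}\). This is the claim.

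The main technical obstacle is measurability and well-posedness of the integrands. We need \((w,z)\mapsto W_p(\RefM^w,\TargetM^z)\) and \(z\mapsto W_p(\RefM^z,\TargetM^z)\) to be measurable. This follows from measurability of the kernels \(w\mapsto\RefM^w\), \(z\mapsto\TargetM^z\) as maps into \(\mathcal{P}(\U)\) together with lower semicontinuity of \(W_p\), as already used in the proof of \cref{prop:Wp-nested}. The conditionals must also have finite \(p\)-th moments a.e., which follows by disintegrating the finite moment of \(\TargetM\) and \(\eta\). If the second term is infinite, the bound is trivial. If an optimal outer coupling is undesirable, the same argument works with a near-optimal coupling and a limit \(\varepsilon\to 0\).
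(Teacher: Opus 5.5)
Your proof is correct, but it organizes the argument differently from the paper.

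The paper applies the triangle inequality in \(\mathcal{P}(\U)\) through the marginalized measure \(\pma = \TargetM_\psi(\dd z)\,\RefM^z\) of \cref{eqn:opt-forward}. It bounds \(W_p(\pma,\TargetM)\) by \cref{cor:marginal-Wp}, using the identity outer coupling. It bounds \(W_p(\eta,\pma)\) by \cref{eqn:nested-ot} applied to two members of \(\psiclass\), whose inner cost \(W_p^p(\RefM^w,\RefM^z)\) is exactly what the Lipschitz hypothesis controls.

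You instead fix a single outer coupling of \(\eta_\psi\) and \(\TargetM_\psi\), apply the triangle inequality fiberwise through \(\RefM^z\), and recombine with Minkowski's inequality in \(L^p(\pi_\psi)\). Your intermediate conditionals \(\RefM^z\), glued along \(z\sim\TargetM_\psi\), make up precisely \(\pma\). So your argument is the paper's with its two nested couplings merged into one. It is self-contained and never needs \(\pma\in\psiclass\) or the corollary.

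The paper's route is more modular. It isolates the estimate \(W_p(\eta,\pma)\leq K\,W_p(\eta_\psi,\TargetM_\psi)\), which says the lift \(\eta_\psi\mapsto\eta_\psi(\dd w)\,\RefM^w\) is \(K\)-Lipschitz in \(W_p\). That is a statement about \(\psiclass\) alone, independent of the conditionals of \(\TargetM\).

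You are right that \(\TargetM_\psi(\operatorname{supp}\RefM_\psi)=1\) is needed. The paper uses it implicitly as well, both for \(\pma\in\psiclass\) and to apply the hypothesis at \(z\), and it follows from the standing assumption \(\TargetM\ll\RefM\).

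Finally, an optimal outer coupling exists without any moment assumption on \(\psi\), because the cost \(\|w-z\|^p\) is continuous and nonnegative. If the infimum is infinite, the bound is trivial. Your caveat about linear or Lipschitz \(\psi\) can therefore be dropped.
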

\begin{proof}
    By the triangle inequality, \(W_p(\eta,\TargetM) \leq W_p(\eta, \tilde\eta) + W_p(\tilde\eta, \TargetM)\), and the second term is bounded by \cref{cor:marginal-Wp}. For the first, both \(\eta\) and \(\tilde\eta\) lie in \(\psiclass\), with \(\eta^w = \RefM^w\), \(\tilde\eta^z = \RefM^z\), and \(\tilde\eta_\psi = \TargetM_\psi\), so \cref{eqn:nested-ot} gives
    \begin{equation*}
        W_p^p(\eta, \tilde\eta)
        \leq \inf_{\pi_\psi \in \Pi(\eta_\psi, \TargetM_\psi)} \int_{\R^d\times\R^d} W_p^p\left(\RefM^w, \RefM^z\right)\pi_\psi(\dd w, \dd z)
        \leq K^p W_p^p(\eta_\psi, \TargetM_\psi).
    \end{equation*}
\end{proof}

\begin{remark}[Gaussian references]\label{rem:gauss-K}
    For Gaussian \(\RefM\), Matheron's update \cref{eq:matheron} transports \(\RefM^w\) to \(\RefM^z\) \emph{exactly} by the shift \(u \mapsto u + \Sigma_{u\psi}\Sigma_{\psi\psi}^{-1}(z-w)\), so that
    \(W_p(\RefM^w,\RefM^z) \leq \|\Sigma_{u\psi}\Sigma_{\psi\psi}^{-1}\|\|w-z\|\) for every \(p\), and \cref{prop:lift-stability} holds with the explicit constant \(K = \|\Sigma_{u\psi}\Sigma_{\psi\psi}^{-1}\|\).
\end{remark}

\begin{corollary}[Learned reference conditionals]\label{cor:learned-cond}
    Let \(\hat{\eta}\) denote the law of the two-stage sampler of \cref{subsubsec:aux-gen}: \(W \sim \hat{\eta}_\psi\), then \(U \sim \hat{\RefM}(\cdot \mid W)\), where \(\hat{\RefM}(\cdot \mid w)\) approximates \(\RefM^w\). Under the hypotheses of \cref{prop:lift-stability}, and assuming \(\hat{\eta}_\psi(\operatorname{supp}\RefM_\psi) = 1\),
    \begin{equation*}
    \begin{split}
        W_p(\hat{\eta}, \TargetM)
        \leq \left(\EE_{W\sim\hat{\eta}_\psi}\left[W_p^p\left(\hat{\RefM}(\cdot \mid W), \RefM^W\right)\right]\right)^{1/p}
        &+ K W_p(\hat{\eta}_\psi, \TargetM_\psi) \\
        &+ \left(\EE_{W\sim\TargetM_\psi}\left[W_p^p\left(\RefM^W, \TargetM^W\right)\right]\right)^{1/p}.
    \end{split}
    \end{equation*}
\end{corollary}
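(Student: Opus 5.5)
Introduce the intermediate measure \(\eta := \hat{\eta}_\psi(\dd w)\,\RefM^w(\dd u)\). It is the law of the two-stage sampler when the learned conditional is replaced by the exact reference conditional. Because \(\hat{\eta}_\psi(\operatorname{supp}\RefM_\psi) = 1\), its conditionals along the fibers of \(\psi\) are \(\RefM^w\) for \(\hat{\eta}_\psi\)-a.e.\ \(w\), so \(\eta \in \psiclass\) with \(\eta_\psi = \hat{\eta}_\psi\). The plan is then the triangle inequality
\begin{equation*}
    W_p(\hat{\eta}, \TargetM) \leq W_p(\hat{\eta}, \eta) + W_p(\eta, \TargetM).
\end{equation*}
We bound the second term directly by \cref{prop:lift-stability} applied to this \(\eta\). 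This yields \(K W_p(\hat{\eta}_\psi, \TargetM_\psi)\) plus the conditional term \(\bigl(\EE_{W\sim\TargetM_\psi}[W_p^p(\RefM^W,\TargetM^W)]\bigr)^{1/p}\), which are the last two terms of the claimed bound.

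For the first term, build a coupling of \(\hat{\eta}\) and \(\eta\) that shares the coarse variable. Both measures have the same \(\psi\)-marginal \(\hat{\eta}_\psi\) in the sense of the first-stage law, and they differ only in the kernel used to complete \(W\) to \(U\). For each \(w\), select an optimal coupling \(\pi^w \in \Pi(\hat{\RefM}(\cdot\mid w), \RefM^w)\) measurably in \(w\), using \cite[Corollary~5.22]{villani2009optimal} as in the proof of \cref{prop:Wp-nested}. Then set \(\pi := \int \pi^w\,\hat{\eta}_\psi(\dd w)\). Its first marginal is \(\int \hat{\RefM}(\cdot\mid w)\,\hat{\eta}_\psi(\dd w) = \hat{\eta}\) by definition of the sampler, and its second marginal is \(\int \RefM^w\,\hat{\eta}_\psi(\dd w) = \eta\). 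Hence
\begin{equation*}
    W_p^p(\hat{\eta}, \eta) \leq \int \|u-u'\|^p\,\dd\pi = \EE_{W\sim\hat{\eta}_\psi}\bigl[W_p^p(\hat{\RefM}(\cdot\mid W), \RefM^W)\bigr],
\end{equation*}
and taking \(p\)-th roots gives the first term of the bound.

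The one point needing care is this first step. \(\hat{\eta}\) need not lie in \(\psiclass\): the learned conditional may not even concentrate on the fiber \(\psi^{-1}(w)\). Consequently \cref{prop:Wp-nested} cannot be invoked verbatim, since it disintegrates \(\hat{\eta}\) along \(\psi\). We avoid this by coupling through the sampler's latent variable \(W\) rather than through \(\psi(U)\), which is why the mixture coupling is built by hand. Measurability of \(w\mapsto \hat{\RefM}(\cdot\mid w)\) is implicit in the sampler being well defined, and that is what the measurable-selection step requires. Finiteness of the moments is needed for \(W_p\) to be finite; if the right-hand side is infinite, the bound is trivial.
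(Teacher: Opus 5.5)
Your proposal is correct and matches the paper's proof. The paper likewise introduces the member of \(\psiclass\) with marginal \(\hat{\eta}_\psi\) and conditionals \(\RefM^w\), couples it with \(\hat{\eta}\) by sharing the latent \(W\) and optimally coupling \(\hat{\RefM}(\cdot\mid W)\) with \(\RefM^W\), and concludes with the triangle inequality and \cref{prop:lift-stability}. Your remark that \(\hat{\eta}\) need not lie in \(\psiclass\) is a correct and useful clarification: it explains why the coupling has to be built through the sampler's latent variable rather than by disintegrating along \(\psi\) via \cref{prop:Wp-nested}.
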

\begin{proof}
    Let \(\eta \in \psiclass\) be the measure with marginal \(\eta_\psi = \hat{\eta}_\psi\) and conditionals \(\RefM^w\). Coupling \(\hat{\eta}\) and \(\eta\) by drawing the same \(W \sim \hat{\eta}_\psi\) and then coupling \(\hat{\RefM}(\cdot\mid W)\) with \(\RefM^W\) optimally gives
    \(W_p^p(\hat{\eta}, \eta) \leq \EE_{W\sim\hat{\eta}_\psi}[W_p^p(\hat{\RefM}(\cdot\mid W), \RefM^W)]\); conclude by the triangle inequality and \cref{prop:lift-stability} applied to \(\eta\).
\end{proof}

Each conditional Wasserstein distance appearing in the integrals of
\cref{cor:marginal-Wp,prop:lift-stability} compares \(\RefM^w\)
with \(\TargetM^w(\dd u) = \frac{1}{Z_w}\exp(-\Phi(u))\RefM^w(\dd u)\),
that is, it compares \(\frac{1}{Z_w}\exp(-\Phi(u))\) to a constant likelihood. This simple structure allows us to obtain bounds based on low-order moments of \(\RefM^w\) using duality.

\begin{proposition}[Conditional Wasserstein bound]\label{prop:fiber-w1}
    For \(\RefM_\psi\)-almost every
    \(w\), let \(Z_w := \EE_{\RefM^w}[e^{-\Phi}]\), and for independent
    \(U, V \sim \RefM^w\) let
    \(\sigma_w^2 := \tfrac12 \EE\big\|U - V\big\|^2 < \infty\). Then
    \begin{equation*}
        W_1(\RefM^w, \TargetM^w)
        \leq \sigma_w \sqrt{\frac{1 - Z_w}{Z_w}}.
    \end{equation*}
    If, in addition, \(\Phi\) is \(C(w)\)-Lipschitz along the fiber \(\{\psi(u) = w\}\) then
    \begin{equation*}
        W_1(\RefM^w, \TargetM^w)
        \leq \frac{C(w) \sigma_w^2}{Z_w}.
    \end{equation*}
\end{proposition}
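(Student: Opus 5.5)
Both bounds come from Kantorovich--Rubinstein duality on the fiber. We write \(\TargetM^w = g\,\RefM^w\) with \(g := e^{-\Phi}/Z_w\). This representation holds for \(\RefM_\psi\)-a.e.\ \(w\), by disintegrating \(\dd\TargetM \propto e^{-\Phi}\dd\RefM\) as in the proof of \cref{prop:cond-match}. For any \(1\)-Lipschitz \(f\) we then have
\begin{equation*}
\EE_{\TargetM^w}[f] - \EE_{\RefM^w}[f] = \EE_{\RefM^w}\big[(f - c)(g - 1)\big]
\end{equation*}
for every constant \(c\), since \(\EE_{\RefM^w}[g-1]=0\). Choosing \(c = \EE_{\RefM^w}[f]\) and applying Cauchy--Schwarz gives
\begin{equation*}
\Var_{\RefM^w}(f)^{1/2}\,\|g-1\|_{L^2(\RefM^w)}.
\end{equation*}
The variance is controlled by the Lipschitz property. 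With \(U,V\) independent draws from \(\RefM^w\),
\begin{equation*}
\Var(f) = \tfrac12\EE|f(U)-f(V)|^2 \le \tfrac12\EE\|U-V\|^2 = \sigma_w^2 .
\end{equation*}

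For the first bound it remains to control \(\|g-1\|_{L^2}^2 = \EE_{\RefM^w}[g^2] - 1\). By the normalization \(\einf_\RefM\Phi = 0\) we have \(e^{-\Phi}\le 1\). This normalization must hold \(\RefM^w\)-a.s.\ for a.e.\ \(w\), which follows since \(\Phi\ge 0\) off a \(\RefM\)-null set. Hence \(e^{-2\Phi}\le e^{-\Phi}\), so \(\EE[g^2]\le Z_w/Z_w^2 = 1/Z_w\). This gives \(\|g-1\|^2\le (1-Z_w)/Z_w\). Taking the supremum over \(f\) yields \(W_1\le \sigma_w\sqrt{(1-Z_w)/Z_w}\).

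For the second bound we use a covariance representation instead of the crude \(L^2\) bound. We have
\begin{equation*}
\EE_{\RefM^w}[(f-c)(g-1)] = \Cov_{\RefM^w}(f, g) = \frac{1}{Z_w}\Cov_{\RefM^w}(f, e^{-\Phi}),
\end{equation*}
and
\begin{equation*}
\Cov(f,h) = \tfrac12\EE\big[(f(U)-f(V))(h(U)-h(V))\big].
\end{equation*}
Along the fiber, \(h=e^{-\Phi}\) is \(C(w)\)-Lipschitz, because \(t\mapsto e^{-t}\) is \(1\)-Lipschitz on \([0,\infty)\) and \(\Phi\ge0\). Therefore
\begin{equation*}
|\Cov(f,h)|\le \tfrac12 C(w)\EE\|U-V\|^2 = C(w)\sigma_w^2 .
\end{equation*}
Dividing by \(Z_w\) gives the claim. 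Since \(\RefM^w\) is concentrated on the fiber, Lipschitz continuity there is all that is needed.

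The main point of care is measure-theoretic rather than analytic. We need the fiberwise density representation of \(\TargetM^w\) and the bound \(\Phi\ge0\) to hold \(\RefM^w\)-a.s.\ for \(\RefM_\psi\)-a.e.\ \(w\). We also need \(Z_w>0\) a.e.; where \(Z_w = 0\) the conditional \(\TargetM^w\) is irrelevant and the bound is read as trivial. Finally, applying duality requires finite first moments of \(\RefM^w\) and \(\TargetM^w\). For \(\RefM^w\) this is implied by \(\sigma_w<\infty\), and for \(\TargetM^w\) it then follows from \(g\le 1/Z_w\).
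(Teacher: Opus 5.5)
Your proof is correct and follows the paper's argument. Both use Kantorovich--Rubinstein duality on the fiber, Cauchy--Schwarz with \(\Var_{\RefM^w}(f)\le\sigma_w^2\) and \(e^{-2\Phi}\le e^{-\Phi}\) for the first bound, and the covariance identity \(\Cov(f,h)=\tfrac12\EE[(f(U)-f(V))(h(U)-h(V))]\) for the second. The only difference is cosmetic: for the Lipschitz bound you estimate the integrand of that identity pointwise, whereas the paper applies Cauchy--Schwarz again and bounds \(\Var_{\RefM^w}(e^{-\Phi})\le C(w)^2\sigma_w^2\); both give the same constant, and your measure-theoretic remarks (a.s.\ nonnegativity of \(\Phi\) on fibers, \(Z_w>0\), finite first moments for duality) supply details the paper leaves implicit.
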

\begin{proof}
    By Kantorovich--Rubinstein duality on the fiber and
    \(\dd\TargetM^w/\dd\RefM^w = e^{-\Phi}/Z_w\),
    \begin{equation*}
        W_1(\RefM^w, \TargetM^w)
        = \sup_{\mathrm{Lip}(f) \leq 1}
        \left( \EE_{\TargetM^w}[f] - \EE_{\RefM^w}[f] \right)
        = \frac{1}{Z_w} \sup_{\mathrm{Lip}(f) \leq 1}
        \Cov_{\RefM^w}\left(f,\ e^{-\Phi}\right).
    \end{equation*}
    For independent \(U, V \sim \RefM^w\) and square-integrable
    \(f, g\),
    \begin{equation*}
        \Cov(f, g) = \tfrac12
        \EE\left[(f(U) - f(V))(g(U) - g(V))\right].
    \end{equation*}
    Cauchy--Schwarz gives
    \(\Cov_{\RefM^w}(f, e^{-\Phi}) \leq
    \sqrt{\Var_{\RefM^w}(f)} \sqrt{\Var_{\RefM^w}(e^{-\Phi})}\),
    and for \(1\)-Lipschitz \(f\) the covariance identity gives
    \(\Var_{\RefM^w}(f) \leq \tfrac12 \EE\|U - V\|^2 = \sigma_w^2\),
    so
    \begin{equation*}
        W_1(\RefM^w, \TargetM^w)
        \leq \frac{\sigma_w}{Z_w}
        \sqrt{\Var_{\RefM^w}\left(e^{-\Phi}\right)}.
    \end{equation*}
    Since \(e^{-2\Phi} \leq e^{-\Phi}\) pointwise,
    \(\Var_{\RefM^w}(e^{-\Phi}) = \EE[e^{-2\Phi}] - Z_w^2 \leq
    Z_w - Z_w^2\), which gives the first bound. When \(\Phi\) is
    \(C(w)\)-Lipschitz, since \(\Phi \geq 0\) and
    \(t \mapsto e^{-t}\) is \(1\)-Lipschitz on \([0, \infty)\),
    \begin{equation*}
        \Var_{\RefM^w}(e^{-\Phi})
        = \tfrac12 \EE\left[\left(e^{-\Phi(U)} - e^{-\Phi(V)}\right)^2\right]
        \leq \tfrac{C(w)^2}{2} \EE\|U - V\|^2 = C(w)^2 \sigma_w^2,
    \end{equation*}
    which gives the second.
\end{proof}

The two bounds are tight in opposite regimes. For
\(\RefM^w = \tfrac12(\delta_0 + \delta_r)\) and
\(\Phi(u) = M|u|\), the first bound is tight as
\(Mr \to \infty\), saturating at \(W_1 \to r/2\), while the
Lipschitz bound matches \(W_1\) with constant one as
\(Mr \to 0\).

Integrating over the conditioning variable costs only the global
normalizing constant, since
\(\dd\TargetM_\psi/\dd\RefM_\psi(w) = Z_w/Z\):
\begin{equation*}
    \EE_{W \sim \TargetM_\psi}\left[W_1(\RefM^W, \TargetM^W)\right]
    \leq \frac{1}{Z} \EE_{W \sim \RefM_\psi}\left[
    C(W) \sigma_W^2 \right],
\end{equation*}
which, through \cref{cor:marginal-Wp} at \(p = 1\), bounds
\(W_1(\pma, \TargetM)\), and likewise bounds the second term of \cref{prop:lift-stability}; the
smaller of the two fiber bounds may be used at each \(w\).

\subsection{Kullback--Leibler bounds}\label{subsec:PMA-error}
The forward KL best approximation $\tilde{\eta}$ from \cref{eqn:opt-forward}
matches the $\psi$-marginal of $\TargetM$ while retaining the conditionals
of $\RefM$, and is targeted in practice by
\cref{alg:reduced-gen,alg:reduced-cond-gen}. Its Radon--Nikodym derivative
with respect to $\RefM$ is

\begin{equation}\label{eqn:pma-def}
    \dd\pma(u) = \frac{1}{\tilde{Z}}\exp(-\tilde{\phi}(\psi(u)))\dd\RefM(u), \quad
    \tilde{\phi}(w) := -\log\EE_\RefM\left[\exp(-\Phi(U)) \mid \psi(U) = w\right].
\end{equation}

We first note that $\tilde{Z} = Z$. Indeed, by the tower property of conditional expectation,
\begin{align*}
    \tilde{Z} &= \EE_\RefM\left[\exp(-\tilde{\phi}(\psi(U)))\right]
    = \EE_\RefM\left[\EE_\RefM\left[\exp(-\Phi(U)) \mid \psi(U)\right]\right]
    = \EE_\RefM\left[\exp(-\Phi(U))\right] = Z.
\end{align*}

The error of \emph{any} approximation in \(\psiclass\) decomposes around \(\pma\). Recall the chain rule for the KL divergence from the proof of \cref{prop:best-approx}: for \(\eta \in \psiclass\), the conditionals of \(\eta\) agree with those of \(\RefM\) (either both sides below are infinite, or \(\TargetM_\psi \ll \eta_\psi\) and the conditionals agree \(\TargetM_\psi\)-almost everywhere), so
\begin{equation}\label{eqn:kl-chain-psi}
    \kl(\TargetM, \eta) = \kl(\TargetM_\psi, \eta_\psi)
    + \EE_{W \sim \TargetM_\psi}\left[\kl\left(\TargetM^W, \RefM^W\right)\right].
\end{equation}
Taking \(\eta = \pma\), the first term vanishes since \(\pma_\psi = \TargetM_\psi\), which identifies the second term as
\begin{equation}\label{eqn:kl-irreducible}
    \kl(\TargetM, \pma) = \EE_{W\sim\TargetM_\psi}\left[\kl\left(\TargetM^W, \RefM^W\right)\right],
\end{equation}
so that for every \(\eta \in \psiclass\),
\begin{equation}\label{eqn:kl-pythagoras}
    \kl(\TargetM,\eta) = \kl(\TargetM_\psi, \eta_\psi) + \kl(\TargetM, \pma).
\end{equation}
The first term in \cref{eqn:kl-pythagoras} is the error of the \(d\)-dimensional generative model fitted to the marginal \(\TargetM_\psi\), which is the only quantity that \cref{alg:reduced-gen,alg:reduced-cond-gen} control, while the second is irreducible within \(\psiclass\). In the finite-dimensional setting with \(\psi\) a linear projection, the identity \cref{eqn:kl-pythagoras} appears in \cite[Section~2.1]{certified}. The reverse divergence decomposes analogously,
\begin{equation}\label{eqn:kl-chain-reverse}
    \kl(\eta, \TargetM) = \kl(\eta_\psi, \TargetM_\psi) + \EE_{W\sim\eta_\psi}\left[\phi^{*}(W)\right],
    \qquad \phi^{*}(w) = \kl(\RefM^w, \TargetM^w),
\end{equation}
quantifying in the same way the suboptimality of the variational approximation produced by \cref{alg:vi}. When the reference conditional is itself learned, as in \cref{subsec:navier-stokes}, the KL decomposition contains an additional term. From the same decomposition
\begin{equation}
    \kl(\TargetM, \hat{\eta})
    = \kl(\TargetM_\psi, \hat{\eta}_\psi)
    + \EE_{W\sim\TargetM_\psi}\!\left[\kl\big(\TargetM^W, \hat{\RefM}(\cdot \mid W)\big)\right],
\end{equation}
the fiber term can be further decomposed relative to the idealized reference conditional. Assuming absolute continuity of the conditionals, \(\TargetM^w \ll \RefM^w \ll \hat{\RefM}(\cdot \mid w)\), we have
\begin{equation}
    \kl\big(\TargetM^w, \hat{\RefM}(\cdot \mid w)\big)
    = \kl(\TargetM^w, \RefM^w)
    + \EE_{\TargetM^w}\!\left[\log \frac{\dd\RefM^w}{\dd\hat{\RefM}(\cdot \mid w)}\right].
\end{equation}
In the remainder of this subsection, we focus on bounding the irreducible error \cref{eqn:kl-irreducible}.

The following proposition bounds the irreducible error by the
expected symmetrized (Jeffreys) divergence of the conditionals, which
reduces to an explicit difference of conditional expectations of
\(\Phi\).
\begin{proposition}[Jeffreys bound]\label{prop:jeffreys}
For \(\TargetM_\psi\)-almost every \(w\),
\begin{equation}\label{eq:jeffreys-fiber}
    \kl(\TargetM^w, \RefM^w) + \kl(\RefM^w, \TargetM^w)
    = \EE_{\RefM}\left[\Phi \mid \psi(U) = w\right]
    - \EE_{\TargetM}\left[\Phi \mid \psi(U) = w\right],
\end{equation}
and consequently
\begin{equation}\label{eq:jeffreys-bound}
    \kl\left(\TargetM, \pma\right)
    \leq \EE_{W\sim\TargetM_\psi}\left[
    \kl(\TargetM^W, \RefM^W) + \kl(\RefM^W, \TargetM^W)\right]
    = \EE_{\TargetM}\left[\EE_{\RefM}\left[\Phi(U)\mid \psi(U)\right]-\Phi(U)\right].
\end{equation}
\end{proposition}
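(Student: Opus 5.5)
The plan is to work fiberwise with the explicit conditional density. Fix \(w\) and set \(Z_w := \EE_{\RefM^w}[e^{-\Phi}]\). Disintegrating \(\dd\TargetM = Z^{-1}e^{-\Phi}\dd\RefM\) along \(\psi\) in the same way as in the proof of \cref{prop:cond-match}, we get \(\dd\TargetM_\psi/\dd\RefM_\psi(w) = Z_w/Z\). We also get \(\dd\TargetM^w/\dd\RefM^w = e^{-\Phi}/Z_w\) for \(\TargetM_\psi\)-a.e.\ \(w\), and these are exactly the \(w\) with \(0<Z_w<\infty\). The normalization \(\einf_\RefM\Phi=0\) gives \(Z_w\le 1\) almost everywhere. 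Then
\begin{equation*}
\kl(\TargetM^w,\RefM^w) = \EE_{\TargetM^w}[-\Phi] - \log Z_w ,
\end{equation*}
and, whenever \(\RefM^w\ll\TargetM^w\) (which holds because \(e^{-\Phi}>0\) wherever \(\Phi<\infty\)),
\begin{equation*}
\kl(\RefM^w,\TargetM^w) = \EE_{\RefM^w}[\Phi] + \log Z_w .
\end{equation*}
Adding the two, the \(\log Z_w\) terms cancel and we obtain \cref{eq:jeffreys-fiber}, after identifying \(\EE_{\RefM^w}[\Phi]\) and \(\EE_{\TargetM^w}[\Phi]\) with the conditional expectations \(\EE_\RefM[\Phi\mid\psi=w]\) and \(\EE_\TargetM[\Phi\mid\psi=w]\).

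The main obstacle is integrability, since each term must be well defined before the \(\log Z_w\) cancellation is legitimate. Because \(\Phi\ge0\), both conditional expectations are well defined in \([0,\infty]\). We have \(\EE_{\TargetM^w}\Phi \le \EE_{\RefM^w}[\Phi e^{-\Phi}]/Z_w \le (e Z_w)^{-1}<\infty\), so this term is always finite. Also \(|\log Z_w|<\infty\) for \(\TargetM_\psi\)-a.e.\ \(w\). Hence each divergence is a finite quantity plus possibly \(+\infty\) coming from \(\EE_{\RefM^w}\Phi\), and the identity holds in \([0,\infty]\) with both sides infinite exactly when \(\EE_{\RefM^w}\Phi=\infty\). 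If \(\Phi=\infty\) on a \(\RefM^w\)-positive set, then \(\RefM^w\not\ll\TargetM^w\), so both sides are \(+\infty\) and the identity still holds. I would handle this case separately.

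For \cref{eq:jeffreys-bound}, start from \cref{eqn:kl-irreducible}. Since \(\kl(\RefM^W,\TargetM^W)\ge0\), integrating the fiber identity against \(\TargetM_\psi\) gives the inequality. For the equality on the right, apply the tower property under \(\TargetM\). The first term is \(\EE_{W\sim\TargetM_\psi}\EE_\RefM[\Phi\mid\psi=W] = \EE_\TargetM[\EE_\RefM[\Phi\mid\psi](U)]\), since \(\EE_\RefM[\Phi\mid\psi]\) is a function of \(\psi(U)\). The second term is \(\EE_{W\sim\TargetM_\psi}\EE_{\TargetM^W}\Phi = \EE_\TargetM\Phi\). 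That second expectation is finite, at most \(1/(eZ)\), so subtracting it is legitimate and the combined expression \(\EE_\TargetM[\EE_\RefM[\Phi\mid\psi]-\Phi]\) makes sense in \([0,\infty]\).
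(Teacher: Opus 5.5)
Your proposal is correct and follows the paper's proof. You use \(\dd\TargetM^w/\dd\RefM^w = e^{-\Phi}/Z_w\) (the paper's \(e^{\tilde\phi(w)-\Phi}\) with \(\tilde\phi(w) = -\log Z_w\)), compute both fiber divergences so that \(\log Z_w\) cancels, get the inequality from \cref{eqn:kl-irreducible} and the nonnegativity of \(\kl(\RefM^W,\TargetM^W)\), and get the equality by iterated expectation.

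The paper leaves the integrability bookkeeping implicit, and you make it explicit. The bounds \(\EE_{\TargetM^w}\Phi \le (eZ_w)^{-1}\) and \(\EE_\TargetM\Phi \le (eZ)^{-1}\) justify the cancellation and the splitting of the expectation in \([0,\infty]\), so this is a welcome addition.
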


\begin{proof}
Since \(\dd\TargetM^w/\dd\RefM^w = e^{\tilde{\phi}(w) - \Phi}\),
taking expectations of the log-density ratio under \(\TargetM^w\) and
under \(\RefM^w\) gives
\begin{equation*}
    \kl(\TargetM^w, \RefM^w)
    = \tilde{\phi}(w) - \EE_\TargetM\left[\Phi \mid \psi(U) = w\right],
    \qquad
    \kl(\RefM^w, \TargetM^w)
    = \EE_\RefM\left[\Phi \mid \psi(U) = w\right] - \tilde{\phi}(w),
\end{equation*}
and summing gives \cref{eq:jeffreys-fiber}. The inequality in
\cref{eq:jeffreys-bound} follows from \cref{eqn:kl-irreducible} and
the nonnegativity of \(\kl(\RefM^W, \TargetM^W)\). The equality
follows from averaging \cref{eq:jeffreys-fiber} over
\(W \sim \TargetM_\psi\), since, by iterated expectation,
\begin{equation*}
    \EE_{W\sim\TargetM_\psi}\left[\EE_{\TargetM}\left[\Phi\mid \psi(U)
    = W\right]\right] = \EE_{\TargetM}\left[\Phi(U)\right].
\end{equation*}
\end{proof}

By \cref{eqn:kl-irreducible}, the middle term of
\cref{eq:jeffreys-bound} exceeds \(\kl(\TargetM,\pma)\) by exactly
\(\EE_{W\sim\TargetM_\psi}\left[\phi^{*}(W)\right]\), the reverse
suboptimality term of \cref{eqn:kl-chain-reverse}; the bounds derived
below therefore control both terms simultaneously. Equivalently, the
excess \(\phi^{*}(w) = \EE_\RefM\left[\Phi \mid \psi(U) = w\right] -
\tilde{\phi}(w)\) is the gap in Jensen's inequality
\(\tilde{\phi}(w) = -\log\EE_\RefM\left[e^{-\Phi} \mid \psi(U) =
w\right] \leq \EE_\RefM\left[\Phi \mid \psi(U) = w\right]\). Bounds
in total variation and Hellinger distance follow from the
Kullback--Leibler bounds here and below via Pinsker's inequality and
\(d_H^2 \leq \kl\), with \(d_H\) the Hellinger distance defined in \cref{subsec:functional}.

When \(\Phi\) is Lipschitz along the fibers of \(\psi\), \cref{prop:jeffreys} already yields an explicit rate, requiring no linear structure on \(\psi\) and no assumptions on the reference conditionals beyond first moments.

\begin{corollary}\label{cor:kl-lipschitz}
    Suppose that \(|\Phi(u) - \Phi(u')| \leq C\|u - u'\|\) whenever \(\psi(u) = \psi(u')\). Then for any measurable section \(u^*\) of \(\psi\), that is, any measurable map \(u^*:\R^d\to\U\) satisfying \(\psi(u^*(w)) = w\),
    \begin{equation*}
        \kl(\TargetM, \pma) \leq \frac{2C}{Z}\EE_\RefM\left[\|U - u^*(\psi(U))\|\right].
    \end{equation*}
\end{corollary}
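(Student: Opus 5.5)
We start from the Jeffreys bound of \cref{prop:jeffreys}, which gives
\begin{equation*}
    \kl(\TargetM,\pma) \leq \EE_{W\sim\TargetM_\psi}\left[\EE_{\RefM}\left[\Phi\mid\psi(U)=W\right] - \EE_{\TargetM}\left[\Phi\mid\psi(U)=W\right]\right].
\end{equation*}
The plan is to control each fiber term by comparing \(\Phi\) with its value at the fixed point \(u^*(w)\) of the fiber. Since \(\psi(u^*(w)) = w\), the Lipschitz hypothesis applies to every pair \((u, u^*(w))\) with \(\psi(u)=w\). It gives \(|\Phi(u) - \Phi(u^*(w))| \leq C\|u - u^*(w)\|\) on the fiber.

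Adding and subtracting \(\Phi(u^*(w))\), the fiber integrand becomes
\begin{equation*}
    \EE_{\RefM^w}\left[\Phi - \Phi(u^*(w))\right] + \EE_{\TargetM^w}\left[\Phi(u^*(w)) - \Phi\right] \leq C\,\EE_{\RefM^w}\|U - u^*(w)\| + C\,\EE_{\TargetM^w}\|U-u^*(w)\|.
\end{equation*}
We then average over \(W\sim\TargetM_\psi\) and convert every expectation to one under \(\RefM\).

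For the \(\TargetM^w\) term, the tower property gives \(\EE_{W\sim\TargetM_\psi}\EE_{\TargetM^W}\|U-u^*(W)\| = \EE_\TargetM\|U - u^*(\psi(U))\|\). The normalization \(\einf_\RefM\Phi = 0\) gives \(\dd\TargetM/\dd\RefM = e^{-\Phi}/Z \leq 1/Z\), so this term is at most \(\frac1Z\EE_\RefM\|U-u^*(\psi(U))\|\).

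For the \(\RefM^w\) term, we use \(\dd\TargetM_\psi/\dd\RefM_\psi(w) = Z_w/Z\), where \(Z_w = e^{-\tilde\phi(w)} \leq 1\) because \(\Phi\geq 0\). This gives \(\EE_{W\sim\TargetM_\psi} g(W) \leq \frac1Z \EE_{W\sim\RefM_\psi} g(W)\) for nonnegative \(g\). Applying it to \(g(w) = \EE_{\RefM^w}\|U-u^*(w)\|\) and then the tower property under \(\RefM\) yields the same quantity \(\frac1Z\EE_\RefM\|U-u^*(\psi(U))\|\). Summing the two contributions produces the factor \(2C/Z\).

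The main obstacle is the measure-theoretic bookkeeping rather than the estimate itself. Two points need care. First, the hypothesis is pointwise on fibers while conditionals are defined only almost everywhere, so we must check that \(\RefM^w\) is concentrated on \(\psi^{-1}(w)\) for \(\RefM_\psi\)-a.e.\ \(w\). This holds for regular conditional distributions with respect to \(\sigma(\psi)\), since \(\psi\) is continuous on \(\V\) and \(\RefM(\V)=1\). Second, \(\EE[\Phi\mid\psi]\) may be infinite. If the right-hand side is finite, then \(\Phi\) is integrable on fibers relative to \(\Phi(u^*(w))\), so the subtraction is legitimate; otherwise the bound is trivial. Measurability of \(u^*\) ensures that \(w\mapsto\Phi(u^*(w))\) and \(u\mapsto\|u-u^*(\psi(u))\|\) are measurable.
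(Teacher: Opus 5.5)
Your proof is correct and follows the paper's argument: the Jeffreys bound of \cref{prop:jeffreys}, the fiber-Lipschitz estimate routed through \(u^*(w)\), the density bound \(\dd\TargetM/\dd\RefM \leq 1/Z\), and the tower property. The only difference is bookkeeping. The paper bounds \(\EE_\RefM[\Phi \mid \psi(U)=\psi(u)] - \Phi(u)\) pointwise in \(u\) and applies \(\dd\TargetM/\dd\RefM \leq 1/Z\) once to the whole nonnegative majorant, whereas you split into a \(\RefM^w\) term and a \(\TargetM^w\) term and use the marginal form \(Z_w/Z \leq 1/Z\) for the former, which is the same bound restricted to functions of \(\psi(u)\).
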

\begin{proof}
    Fix \(u\) and write \(w = \psi(u)\). For \(\RefM^w\)-almost every \(u'\), we have \(\psi(u') = w = \psi(u^*(w))\), so the fiber-Lipschitz hypothesis and the triangle inequality through \(u^*(w)\) give \(\Phi(u') - \Phi(u) \leq C\left(\|u' - u^*(w)\| + \|u - u^*(w)\|\right)\), and hence
    \begin{equation*}
        \EE_\RefM\left[\Phi \mid \psi(U) = w\right] - \Phi(u)
        \leq C\left(\int_{\U} \|u' - u^*(w)\|\RefM^w(\dd u') + \|u - u^*(w)\|\right).
    \end{equation*}
    By \cref{prop:jeffreys}, the bound \(\dd\TargetM/\dd\RefM \leq 1/Z\), and the tower property,
    \begin{align*}
        \kl(\TargetM,\pma)
        &\leq \frac{C}{Z}\EE_\RefM\left[\EE_\RefM\left[\|U - u^*(\psi(U))\| \mid \psi(U)\right] + \|U - u^*(\psi(U))\|\right] \\
        &= \frac{2C}{Z}\EE_\RefM\left[\|U - u^*(\psi(U))\|\right].
    \end{align*}
\end{proof}

\begin{corollary}[Second-moment bound]\label{cor:kl-jeffreys}
    Under the same assumptions as \cref{cor:kl-lipschitz}, and with
    \(\sigma_w^2\) as in \cref{prop:fiber-w1},
    \begin{equation*}
        \kl(\TargetM, \pma)
        \leq \EE_{W\sim\TargetM_\psi}\left[
        \kl(\TargetM^W, \RefM^W) + \kl(\RefM^W, \TargetM^W)\right]
        \leq \frac{C^2}{Z}
        \EE_{W\sim\RefM_\psi}\left[\sigma_W^2\right].
    \end{equation*}
\end{corollary}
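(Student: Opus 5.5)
The first inequality is the one in \cref{eq:jeffreys-bound} of \cref{prop:jeffreys}; it uses \cref{eqn:kl-irreducible} and the nonnegativity of \(\kl(\RefM^W,\TargetM^W)\). So only the second inequality needs proof. The plan is to bound the fiberwise Jeffreys divergence by a covariance on each fiber. The Lipschitz hypothesis then turns that covariance into \(\sigma_w^2\), and the change of measure \(\dd\TargetM_\psi/\dd\RefM_\psi = Z_w/Z\) removes the dependence on \(w\) up to the global factor \(1/Z\).

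Fix \(w\) and write \(\dd\TargetM^w/\dd\RefM^w = e^{-\Phi}/Z_w\) with \(Z_w = \EE_{\RefM^w}[e^{-\Phi}]\), as in \cref{prop:fiber-w1}. By \cref{eq:jeffreys-fiber},
\[
J(w) := \EE_{\RefM^w}[\Phi] - \EE_{\TargetM^w}[\Phi] = -\frac{1}{Z_w}\Cov_{\RefM^w}\!\left(\Phi, e^{-\Phi}\right).
\]
Apply the symmetrization identity from the proof of \cref{prop:fiber-w1} with independent \(U,V\sim\RefM^w\):
\[
J(w) = \frac{1}{2Z_w}\EE\left[(\Phi(U)-\Phi(V))\left(e^{-\Phi(V)}-e^{-\Phi(U)}\right)\right].
\]
The integrand is nonnegative. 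Under the normalization \(\einf_\RefM\Phi = 0\) we have \(\Phi\ge 0\), so \(t\mapsto e^{-t}\) is \(1\)-Lipschitz and \(|e^{-\Phi(V)}-e^{-\Phi(U)}|\le|\Phi(U)-\Phi(V)|\). Since \(U\) and \(V\) lie on the same fiber, the hypothesis of \cref{cor:kl-lipschitz} gives \(|\Phi(U)-\Phi(V)|\le C\|U-V\|\). Together these yield
\[
J(w)\le \frac{1}{2Z_w}\EE\left[(\Phi(U)-\Phi(V))^2\right]\le \frac{C^2}{Z_w}\cdot\tfrac12\EE\|U-V\|^2 = \frac{C^2\sigma_w^2}{Z_w}.
\]
The main point needing care is that \(U,V\sim\RefM^w\) lie almost surely on the fiber \(\{\psi=w\}\), so the fiberwise Lipschitz bound applies. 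The finiteness of \(\sigma_w^2\) and the square-integrability of \(\Phi\) on the fiber also have to be checked; these can be handled by restricting to \(w\) with \(\sigma_w<\infty\), or by noting the bound is trivial otherwise.

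Finally, average over \(W\sim\TargetM_\psi\). Because \(\dd\TargetM_\psi/\dd\RefM_\psi(w) = Z_w/Z\) (tower property, as in the computation \(\tilde Z = Z\)),
\[
\EE_{W\sim\TargetM_\psi}[J(W)] \le \int \frac{C^2\sigma_w^2}{Z_w}\,\frac{Z_w}{Z}\,\RefM_\psi(\dd w) = \frac{C^2}{Z}\EE_{W\sim\RefM_\psi}[\sigma_W^2].
\]
The factors \(Z_w\) cancel exactly, which is why the final bound involves only the reference conditional variances.
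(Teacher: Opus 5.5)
Your proof is correct. It reaches the same fiberwise estimate \(J(w)\le C^2\sigma_w^2/Z_w\) and the same change of measure as the paper, but it gets the fiberwise step by a more direct route.

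The paper does not expand the covariance with \(\Phi\) itself. It first uses Kantorovich--Rubinstein duality, with \(\Phi/C\) as a \(1\)-Lipschitz test function on the fiber, to get \(J(w)\le C\,W_1(\RefM^w,\TargetM^w)\). It then quotes the second bound of \cref{prop:fiber-w1}, whose proof takes a supremum of \(\Cov_{\RefM^w}(f,e^{-\Phi})\) over all \(1\)-Lipschitz \(f\) and applies Cauchy--Schwarz. You instead specialize to \(f=\Phi\) and bound the symmetrized integrand pointwise, which avoids both the duality and Cauchy--Schwarz.

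Your route also shows that the Lipschitz hypothesis is needed only in the last step. Before it is used, your chain already gives \(J(w)\le \Var_{\RefM^w}(\Phi)/Z_w\), and hence \(\kl(\TargetM,\pma)\le Z^{-1}\EE_{W\sim\RefM_\psi}[\Var_{\RefM^W}(\Phi)]\) for any nonnegative \(\Phi\) with finite reference conditional variance.

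The paper's modular route buys the link to the Wasserstein analysis. Because the Jeffreys term is bounded by \(C\,W_1(\RefM^w,\TargetM^w)\), either fiber bound of \cref{prop:fiber-w1} can be inserted at each \(w\). Inserting \(\sigma_w\sqrt{(1-Z_w)/Z_w}\) gives \(J(w)\le C\sigma_w\sqrt{(1-Z_w)/Z_w}\), a bound linear in \(C\) that is preferable when \(C\sigma_w\) is large.
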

\begin{proof}
    The first inequality is \cref{prop:jeffreys}. By
    \cref{eq:jeffreys-fiber}, Kantorovich--Rubinstein duality
    (\(\Phi/C\) is \(1\)-Lipschitz on the fiber \(\{\psi(u) = w\}\)),
    and \cref{prop:fiber-w1}, with
    \(Z_w = \EE_{\RefM^w}[e^{-\Phi}]\) as defined there,
    \begin{equation*}
        \kl(\TargetM^w, \RefM^w) + \kl(\RefM^w, \TargetM^w)
        \leq C W_1(\RefM^w, \TargetM^w)
        \leq \frac{C^2 \sigma_w^2}{Z_w}.
    \end{equation*}
    Averaging over \(W \sim \TargetM_\psi\) and using
    \(\dd\TargetM_\psi/\dd\RefM_\psi = Z_w/Z\) completes the proof.
\end{proof}

The hypotheses are identical to those of \cref{cor:kl-lipschitz}, but
this bound is quadratic in \(C\) and uses second moments in place
of first. The two are complementary, with the quadratic bound being sharper
when \(C\sigma_W \lesssim 1\). When the norm is induced by an inner
product, \(\sigma_w^2\) is the conditional variance and the conditional
mean minimizes the squared deviation over sections, so
\(\EE_{\RefM_\psi}[\sigma_W^2] \leq
\EE_\RefM\|U - u^*(\psi(U))\|^2\) for every section \(u^*\), and the
moment computations of \cref{ex:jump-rate} below apply verbatim.

A different route to such bounds is through general stability
estimates for perturbed posterior distributions, as found in
\cite{sprungk2020local}. For instance, applying
\cite[Theorem~11]{sprungk2020local} and noting that replacing
\(\Phi\) by \(\tilde\phi \circ \psi\) changes neither the reference
measure nor the normalizing constant gives the bound
\(\kl(\TargetM, \pma) \leq \frac{2}{Z}\|\Phi - \tilde\phi\circ\psi\|_{L^1_\RefM}\).
Using this bound requires two-sided control of
\(\Phi - \tilde\phi\circ\psi\), where the direction opposite to
Jensen's inequality asks for exponential integrability of the fiber
oscillations of \(\Phi\). Another strategy would be to apply the same
theorem to each fiber in the decomposition \cref{eqn:kl-irreducible},
where the approximation replaces \(e^{-\Phi}/Z_w\) with a constant
likelihood as in \cref{subsec:wasserstein}. In this case, the
perturbation has a single sign, so exponential integrability is not
needed, but the bound still carries an extra factor of order \(1/Z_w\) compared to
\cref{eq:jeffreys-fiber}. In either form the bound
is linear in the perturbation, while the divergence is quadratic when
the perturbation is small, so this route recovers at best the
first-moment rate of \cref{cor:kl-lipschitz} and not the improvement
of \cref{cor:kl-jeffreys}.

\begin{example}[Rates for the Brownian Poisson jump prior]\label{ex:jump-rate}
    Consider the setting of \cref{subsec:jump-deconv}: \(\RefM\) is the Brownian Poisson jump process of \cref{subsubsec:bpjp} on \([0,1]\), \(\psi(u) = (u(z_1),\ldots,u(z_d))\) is point evaluation on a grid of spacing \(h\), and
    \(\Phi(u) = \frac{1}{2\sigma^2}\sum_{i=1}^m (y_i - F_i(u))^2\) with \(F_i(u) = (\varphi * \tanh(\beta u))(x_i)\), where \(\beta = 3\) in \cref{subsec:jump-deconv}.
    Since \(|\tanh| \leq 1\) and \(\tanh(\beta\cdot)\) is \(\beta\)-Lipschitz, we have \(|F_i(u)| \leq \|\varphi\|_{L^1}\) and \(|F_i(u) - F_i(u')| \leq \beta\|\varphi\|_{L^2}\|u-u'\|_{L^2}\), so \(\Phi\) is globally Lipschitz on both \(L^2\) and \(\mathcal{H}_{\psi}\) (see \cref{eqn:special-space}) with the same constant
    \(C = \frac{\beta\|\varphi\|_{L^2}}{\sigma^2}\sum_{i=1}^m(|y_i| + \|\varphi\|_{L^1})\).
    For the section, take \(u^*(w) = \ell(w)\), where \(\ell(w)\) is
    the piecewise-linear interpolant of the grid values \(w\), which
    is also the conditional expectation of the process given
    \(\psi(U) = w\). Since \(\psi(\ell(w)) = w\), \(\|u - \ell(w)\|_{\mathcal{H}_\psi} = \|u - \ell(w)\|_{L^2}\) whenever \(\psi(u) = w\). 
    
    Set
    \(c := \sigma_{\mathrm{bm}}^2 + \lambda\sigma_{\mathrm{jump}}^2\),
    the variance rate of the underlying L\'evy process, whose increments
    over an interval of length \(t\) are independent, mean-zero, and of
    variance \(ct\). On a grid interval of length
    \(h\), writing \(Y_s\) for the increment from the left node, the
    deviation from the chord \(\ell\) satisfies
    \begin{equation*}
        \EE\left|Y_s - \tfrac{s}{h}Y_h\right|^2 = c\frac{s(h-s)}{h}, \qquad s \in [0,h],
    \end{equation*}
    exactly as for a Brownian bridge, as \(\EE[Y_s Y_h] = \Var(Y_s) = cs\). Integrating over each interval and summing the \(1/h\) intervals gives
    \(\EE\|U - \ell(\psi(U))\|_{L^2}^2 = ch/6\), and \cref{cor:kl-lipschitz} with Jensen's inequality gives
    \begin{equation*}
        \kl(\TargetM, \pma) \leq \frac{2C}{Z}\sqrt{\frac{c}{6}} h^{1/2}.
    \end{equation*}
    By contrast, \cref{cor:kl-jeffreys} uses the same moment
    computation at second order, giving
    \begin{equation*}
        \kl(\TargetM, \pma) \leq \frac{C^2}{Z}
        \frac{c}{6} h,
    \end{equation*}
    of order \(h\), while controlling the reverse divergence
    simultaneously. Similarly, consider taking \(\RefM\) to be a Mat\'ern-\(\nu\) Gaussian process defined on a compact set \(\Omega \subset \mathbb{R}^n\) with \(\nu > n/2\), and \(\psi\) to be point evaluations on a mesh of width \(h\). Then writing \(u^*\) for the conditional mean, \(\EE\|U - u^*(\psi(U))\|_{L^2}^2 \leq C' h^{2\nu}\) \cite[eq.~65]{kanagawa2018gaussianprocesseskernelmethods}, and \cref{cor:kl-lipschitz} yields a rate of order \(h^{\nu}\), which \cref{cor:kl-jeffreys} improves to \(h^{2\nu}\). These rates describe the decay of the approximation error within \(\psiclass\).
\end{example}

For the remainder of this section, we assume that \(\psi:\V\to\R^d\) is a bounded linear map, surjective without loss of generality, that \(\Phi\) is Fr\'echet differentiable on \(\V\), and we work entirely in a Hilbert space \(\V \subset \U\) which contains the support of \(\TargetM\), taking gradients, inner products, and orthogonal projections with respect to the \(\V\) inner product.
Define \(\W^\perp := \ker\psi\), which is a closed subspace of \(\V\), and \(\W = (\ker\psi)^\perp\).
For the space \(\H_{\psi}\) of \cref{ex:jump-rate}, gradient projected onto \(\W^{\perp}\) may be identified with the usual \(L^2\) gradient, as the \(\mathcal{H}_{\psi}\) inner product restricted to \(\ker \psi\) is the \(L^2\) inner product. 

Under a growth condition on the projected gradients of \(\Phi\), the bound of \cref{prop:jeffreys} yields an explicit rate in terms of the conditional moments of the reference measure.

\begin{proposition}\label{prop:PMA-grad-lipschitz}
    Let \(u^*:\R^d\to\V\) be a measurable section of \(\psi\), suppose \(\Phi\) is Fr\'echet differentiable on \(\V\), and suppose that for all \(u \in \V\),
    \begin{equation*}
    \|P_{\W^\perp}\nabla\Phi(u)\| \leq \|P_{\W^\perp}\nabla\Phi(u^*(\psi(u)))\| + L(\psi(u))\|u - u^*(\psi(u))\|.
    \end{equation*}
    Then
    \begin{equation*}
    \begin{split}
        \kl(\TargetM,\pma)
        \leq \frac{2}{Z}\EE_\RefM\Big[&\|P_{\W^\perp}\nabla\Phi(u^*(\psi(U)))\|\|U - u^*(\psi(U))\| \\
        &+ \frac{L(\psi(U))}{2}\|U - u^*(\psi(U))\|^2\Big].
    \end{split}
    \end{equation*}
\end{proposition}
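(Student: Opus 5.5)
The plan mirrors the proof of \cref{cor:kl-lipschitz}, with the fiber-Lipschitz bound replaced by a first-order Taylor expansion along the fiber. By \cref{prop:jeffreys}, the bound $\dd\TargetM/\dd\RefM \leq 1/Z$, and the tower property,
\[
\kl(\TargetM,\pma) \leq \frac{1}{Z}\EE_\RefM\big[\EE_\RefM[\Phi\mid\psi(U)] - \Phi(U)\big]_+ .
\]
Positivity is not needed once we bound pointwise by a nonnegative quantity. So it suffices to bound $\Phi(u') - \Phi(u)$ for $u,u'$ in a common fiber $\psi(u)=\psi(u')=w$, and then integrate $u'$ against $\RefM^w$ and $u$ against $\RefM$.

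Fix $w$ and write $u^* = u^*(w)$. Since $\psi$ is linear, every point of the fiber has the form $u^* + v$ with $v\in\W^\perp=\ker\psi$, and the segment between two such points stays in the fiber. We compare each point with the anchor $u^*$, writing
\[
\Phi(u') - \Phi(u) = \big(\Phi(u')-\Phi(u^*)\big) - \big(\Phi(u)-\Phi(u^*)\big).
\]
For $x$ in the fiber, the fundamental theorem of calculus along $t\mapsto u^*+t(x-u^*)$ gives
\[
|\Phi(x)-\Phi(u^*)| \leq \int_0^1 \|P_{\W^\perp}\nabla\Phi(u^*+t(x-u^*))\|\,\|x-u^*\|\dd t ,
\]
because $x-u^*\in\W^\perp$ and so only the projected gradient enters. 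The intermediate point lies in the same fiber, and $u^*(\psi(\cdot))$ evaluated there is again $u^*$. The growth hypothesis therefore bounds the integrand by $\|P_{\W^\perp}\nabla\Phi(u^*)\| + L(w)\,t\|x-u^*\|$, and integrating in $t$ yields
\[
|\Phi(x)-\Phi(u^*)| \leq \|P_{\W^\perp}\nabla\Phi(u^*)\|\,\|x-u^*\| + \tfrac{L(w)}{2}\|x-u^*\|^2 .
\]
This step, checking that the hypothesis applies along the whole segment, is the only delicate point. It relies on linearity of $\psi$, since that is what keeps the path inside a single fiber with a fixed section value.

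Apply this bound to both $x=u'$ and $x=u$. Integrating $u'\sim\RefM^w$ and then $u\sim\RefM$, the $u'$ term becomes, by the tower property, $\EE_\RefM[\cdots]$ of the same expression evaluated at $U$. The two contributions are therefore equal and sum to twice $\EE_\RefM\big[\|P_{\W^\perp}\nabla\Phi(u^*(\psi(U)))\|\|U-u^*(\psi(U))\| + \tfrac{L(\psi(U))}{2}\|U-u^*(\psi(U))\|^2\big]$. Multiplying by $1/Z$ gives the stated bound.

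Measurability of the integrands follows from measurability of $u^*$ and continuity of $\nabla\Phi$ along segments. If the right-hand side is infinite there is nothing to prove; otherwise the conditional expectations are finite and the exchanges of integrals are justified.
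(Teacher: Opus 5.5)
Your proposal is correct and follows essentially the same route as the paper. By linearity of $\psi$ the segment from $u^*(w)$ stays in the fiber, so only the projected gradient enters the fundamental theorem of calculus, and the growth hypothesis at the intermediate points yields the factor $\tfrac{L(w)}{2}$. The triangle inequality through the anchor $u^*(w)$, combined with \cref{prop:jeffreys}, $\dd\TargetM/\dd\RefM \leq 1/Z$ and the tower property, then gives $2/Z$; your positive part plays the role of the paper's pointwise nonnegative majorant.

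One small correction to your closing remark: Fr\'echet differentiability does not give continuity of $\nabla\Phi$ along segments, but you do not need it. The hypothesis bounds the derivative of $t\mapsto\Phi(u^*+t(x-u^*))$ on $[0,1]$, so this function is Lipschitz and the fundamental theorem of calculus applies to it directly.
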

\begin{proof}
    Fix \(u \in \V\) and write \(w = \psi(u)\). Since \(u - u^*(w) \in \ker\psi\), the segment \(u_t := u^*(w) + t(u - u^*(w))\), \(t\in[0,1]\), satisfies \(\psi(u_t) = w\), with \(u_t - u^*(\psi(u_t)) = t(u - u^*(w))\).
    Using that \(u - u^*(w) \in \ker\psi\) and that \(P_{\W^\perp}\) is self-adjoint,
    \begin{align*}
    |\Phi(u) - \Phi(u^*(w))| &\leq
    \int_{0}^{1} \left|\left\langle
        u-u^*(w), \nabla\Phi(u_t)
    \right\rangle\right|
    \dd t\\
    &= \int_{0}^{1} \left|\left\langle
        P_{\W^\perp}\left(u-u^*(w)\right), \nabla\Phi(u_t)
    \right\rangle\right| \dd t\\
    &= \int_{0}^{1} \left|\left\langle
        u-u^*(w), P_{\W^\perp}\nabla\Phi(u_t)
    \right\rangle\right| \dd t\\
    &\leq \|u-u^*(w)\|\int_{0}^{1}\left\|P_{\W^\perp}\nabla\Phi(u_t)\right\|\dd t
    \\
    & \leq \|P_{\W^\perp}\nabla\Phi(u^*(w))\|\|u-u^*(w)\| + \frac{L(w)}{2}\|u-u^*(w)\|^2,
    \end{align*}
    where the hypothesis was applied at \(u_t\), using \(\psi(u_t) = w\) and \(u_t - u^*(w) = t(u-u^*(w))\).
    Writing \(D(u) := \Phi(u) - \Phi(u^*(\psi(u)))\), we have \(|D(u)| \leq \|P_{\W^\perp}\nabla\Phi(u^*(w))\|\|u-u^*(w)\| + \frac{L(w)}{2}\|u-u^*(w)\|^2\). Since \(u^*(\psi(u')) = u^*(w)\) for \(\RefM^{w}\)-almost every \(u'\),
    \begin{align*}
        \EE_\RefM\left[\Phi \mid \psi(U) = w\right] - \Phi(u)
        &= \int_{\V} D(u')\RefM^{w}(\dd u') - D(u) \\
        &\leq \int_{\V} \Big(\|P_{\W^\perp}\nabla\Phi(u^*(w))\|\|u'-u^*(w)\| \\
        &\hspace{5em} + \frac{L(w)}{2}\|u'-u^*(w)\|^2\Big)\RefM^{w}(\dd u') \\
        &\quad + \|P_{\W^\perp}\nabla\Phi(u^*(w))\|\|u-u^*(w)\| + \frac{L(w)}{2}\|u-u^*(w)\|^2.
    \end{align*}
    By \cref{prop:jeffreys}, the bound \(\dd\TargetM/\dd\RefM \leq 1/Z\), and the tower property applied to the conditional term,
    \begin{align*}
        \kl(\TargetM,\pma)
        &\leq \EE_\TargetM\left[\EE_\RefM[\Phi \mid \psi(U)] - \Phi(U)\right] \\
        &\leq \frac{2}{Z}\EE_\RefM\Big[\|P_{\W^\perp}\nabla\Phi(u^*(\psi(U)))\|\|U - u^*(\psi(U))\| \\
        &\qquad\qquad\quad + \frac{L(\psi(U))}{2}\|U - u^*(\psi(U))\|^2\Big].
    \end{align*}
\end{proof}

Since \(\psi(u^*(\psi(U))) = \psi(U)\), the difference \(U - u^*(\psi(U))\) lies in \(\W^\perp\), so that \(\|U - u^*(\psi(U))\| = \|P_{\W^\perp}(U - u^*(\psi(U)))\|\); the distances appearing in the bound are measured within the subspace.
When \(u^*(w)\) is chosen to be a critical point of the restriction of \(\Phi\) to the fiber, such as a local minimum, the first term vanishes and the bound reduces to \(\frac{1}{Z}\EE_\RefM[L(\psi(U))\|U-u^*(\psi(U))\|^2]\). On the other hand, choosing \(u^*(w)\) to be the conditional mean of the prior minimizes the second term, which then reduces to an averaged conditional variance. The hypothesis follows from the triangle inequality whenever the projected gradient is Lipschitz along fibers, i.e., \(\|P_{\W^\perp}(\nabla\Phi(u)-\nabla\Phi(u'))\| \leq L(w)\|u-u'\|\) whenever \(\psi(u) = \psi(u') = w\); for twice differentiable \(\Phi\), this is a bound on the projected Hessian. In contrast to the bounds based on functional inequalities recalled in \cref{subsec:functional}, \cref{prop:PMA-grad-lipschitz} requires only conditional moments of the reference measure, not Poincar\'e or log-Sobolev constants.

\subsection{Error bounds assuming functional inequalities}\label{subsec:functional}
The bounds of \cref{subsec:PMA-error} place regularity
conditions on the potential \(\Phi\) that are uniform in \(u\), while requiring only low-order
conditional moments of the reference measure. In this subsection we
cover the guarantees available when the reference conditionals satisfy
functional inequalities, in which case the error is controlled by
averaged gradient norms projected to the null space of \(\psi\). The
bounds assuming a log-Sobolev inequality follow the results in
\cite{certified}, and those assuming a Poincar\'e inequality largely
follow \cite{cui-LIS}. Both of these works state their bounds on
\(\mathbb{R}^d\), but there is no complication in extending them to the
Hilbert space setting. The identity \cref{eqn:kl-irreducible} takes the
place of the marginal--conditional decompositions used there, and the
arguments are otherwise unchanged. The functional inequalities
themselves are native to infinite dimensions. The log-Sobolev
inequality was introduced in the infinite-dimensional Gaussian setting
\cite{gross1975logarithmic}, Hilbert-space statements of both
inequalities in the form used below may be found in
\cite[Section~10.5]{daprato2002second} and \cite{bogachev2015gaussian},
and the Sobolev classes
\(H^1(\RefM^w)\) over general reference measures are those of
\cite[Chapter~8]{bogachev2010differentiable}.

\begin{definition}[Conditional functional inequalities]\label{def:cond-fi}
    The reference \(\RefM\) satisfies a \emph{conditional log-Sobolev
    inequality} with constant \(\kappa\) if, for \(\RefM_\psi\)-almost
    every \(w\) and all \(h \in H^1(\RefM^w)\),
    \begin{equation*}
        \mathrm{Ent}_{\RefM^w}\left(h^2\right) \leq 2\kappa
        \EE_{\RefM^w}\left[\|P_{\W^\perp}\nabla h\|^2\right],
        \qquad
        \mathrm{Ent}_\rho(g) := \EE_\rho[g \log g] -
        \EE_\rho[g]\log\EE_\rho[g],
    \end{equation*}
    and a \emph{conditional Poincar\'e inequality} with constant
    \(\kappa\) if
    \begin{equation*}
        \Var_{\RefM^w}[h] \leq \kappa
        \EE_{\RefM^w}\left[\|P_{\W^\perp}\nabla h\|^2\right].
    \end{equation*}
\end{definition}

A conditional log-Sobolev inequality implies a conditional Poincar\'e
inequality with the same constant, but the converse is not necessarily
true.

\begin{proposition}[{Kullback--Leibler bound, following \cite{certified}}]\label{prop:kl-lsi}
    Suppose \(\RefM\) satisfies a conditional log-Sobolev inequality
    with constant \(\kappa\). Then
    \begin{equation*}
        \kl(\TargetM, \pma) \leq \frac{\kappa}{2} \EE_\TargetM\left[\|P_{\W^\perp}\nabla\Phi(U)\|^2\right].
    \end{equation*}
\end{proposition}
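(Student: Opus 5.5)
The plan is to work fiber by fiber, starting from the identity \cref{eqn:kl-irreducible},
\[
\kl(\TargetM,\pma)=\EE_{W\sim\TargetM_\psi}\left[\kl(\TargetM^W,\RefM^W)\right],
\]
and to bound each conditional divergence with the conditional log-Sobolev inequality of \cref{def:cond-fi}.

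\textbf{Step 1: write each conditional divergence as an entropy.} For \(\TargetM_\psi\)-a.e.\ \(w\), the proof of \cref{prop:jeffreys} gives the density
\[
\frac{\dd\TargetM^w}{\dd\RefM^w}=g_w:=\frac{e^{-\Phi}}{Z_w},\qquad Z_w=\EE_{\RefM^w}\left[e^{-\Phi}\right]=e^{-\tilde\phi(w)}>0 .
\]
Since \(\EE_{\RefM^w}[g_w]=1\), we have
\[
\kl(\TargetM^w,\RefM^w)=\EE_{\RefM^w}[g_w\log g_w]=\mathrm{Ent}_{\RefM^w}(g_w).
\]

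\textbf{Step 2: apply the log-Sobolev inequality to \(h=g_w^{1/2}\).} Set \(h:=g_w^{1/2}=e^{-\Phi/2}Z_w^{-1/2}\). Its gradient is \(\nabla h=-\tfrac12 h\nabla\Phi\), so the projected gradient satisfies \(\|P_{\W^\perp}\nabla h\|^2=\tfrac14 g_w\|P_{\W^\perp}\nabla\Phi\|^2\). The conditional log-Sobolev inequality then gives
\[
\mathrm{Ent}_{\RefM^w}(g_w)\le 2\kappa\cdot\tfrac14\,\EE_{\RefM^w}\!\left[g_w\|P_{\W^\perp}\nabla\Phi\|^2\right]=\frac{\kappa}{2}\,\EE_{\TargetM^w}\!\left[\|P_{\W^\perp}\nabla\Phi\|^2\right].
\]

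\textbf{Step 3: average over fibers.} Integrating against \(\TargetM_\psi\) and using the disintegration \(\TargetM=\TargetM_\psi(\dd w)\TargetM^w(\dd u)\) yields
\[
\kl(\TargetM,\pma)\le\frac{\kappa}{2}\,\EE_\TargetM\!\left[\|P_{\W^\perp}\nabla\Phi(U)\|^2\right],
\]
which is the claimed bound.

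\textbf{Main obstacle: justifying \(h\in H^1(\RefM^w)\) and the chain rule.} Step 2 needs \(h\) to lie in the Sobolev class, and this is the only delicate point. We may assume the right-hand side is finite; otherwise there is nothing to prove. Even then, \(\EE_{\RefM^w}[\|\nabla h\|^2]\) is finite for \(\TargetM_\psi\)-a.e.\ \(w\), because it is exactly the fiber quantity being averaged. Also \(h\) is bounded, since \(\Phi\ge0\) under the normalization \(\einf_\RefM\Phi=0\), so \(h\le Z_w^{-1/2}\). Fréchet differentiability of \(\Phi\) gives the pointwise chain rule. To place \(h\) in \(H^1(\RefM^w)\) rigorously, we would truncate, for instance using \(h_n=\min(h,\cdot)\) or \(e^{-\min(\Phi,n)/2}\), apply the inequality to the truncations, and pass to the limit by monotone or dominated convergence. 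Lower semicontinuity of entropy handles the left side. Only gradients along \(\ker\psi\) enter, and these are the directions tangent to the fibers, which is consistent with the \(P_{\W^\perp}\) appearing in \cref{def:cond-fi}.
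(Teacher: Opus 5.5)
Your proposal is correct and follows the paper's proof essentially step for step. The paper also applies the conditional log-Sobolev inequality on each fiber with \(h^2 = \dd\TargetM^w/\dd\RefM^w = e^{\tilde\phi(w)-\Phi}\), uses that its logarithm has fiber gradient \(-P_{\W^\perp}\nabla\Phi\), and integrates over \(\TargetM_\psi\) via \cref{eqn:kl-irreducible}. Your truncation argument for placing \(h\) in \(H^1(\RefM^w)\) is extra care that the paper omits; it simply calls this step the standard implication.
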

\begin{proof}
    On the fiber over \(w\), \(\dd\TargetM^w/\dd\RefM^w = e^{\tilde\phi(w) - \Phi}\), whose logarithm has fiber gradient \(-P_{\W^\perp}\nabla\Phi\) (the term \(\tilde\phi(w)\) is constant on the fiber). Applying the conditional log-Sobolev inequality with \(h^2 = \dd\TargetM^w/\dd\RefM^w\) yields the standard implication
    \begin{equation*}
        \kl(\TargetM^w, \RefM^w) \leq \frac{\kappa}{2}\EE_{\TargetM^w}\left[\|P_{\W^\perp}\nabla\Phi\|^2\right],
    \end{equation*}
    and integrating over \(W \sim \TargetM_\psi\) with the identity \cref{eqn:kl-irreducible} gives the result.
\end{proof}

A log-Sobolev inequality is a strong requirement. Under the weaker
conditional Poincar\'e inequality, the same quantity controls the
squared Hellinger distance
\(d_H^2(\rho, \sigma) := 1 - \int \sqrt{\dd\rho \dd\sigma}\).

\begin{proposition}[{Hellinger bound, following \cite{cui-LIS}}]\label{prop:hellinger}
    Suppose \(\RefM\) satisfies a conditional Poincar\'e inequality
    with constant \(\kappa\). Then
    \begin{equation*}
        d_H^2(\TargetM, \pma)
        \leq \frac{\kappa}{4} \EE_\TargetM\left[\|P_{\W^\perp}\nabla\Phi(U)\|^2\right].
    \end{equation*}
\end{proposition}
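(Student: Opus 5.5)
The plan is to work fiber by fiber, apply the conditional Poincar\'e inequality to the square root of the conditional density, and then integrate over the conditioning variable.

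\emph{Reduction to fibers.} The measures \(\TargetM\) and \(\pma\) share the marginal \(\TargetM_\psi\). Their conditionals are \(\TargetM^w\) and \(\RefM^w\), respectively. The Bhattacharyya coefficient factorizes over the disintegration:
\begin{equation*}
\int\sqrt{\dd\TargetM\,\dd\pma} = \int \left(\int \sqrt{\dd\TargetM^w\,\dd\RefM^w}\right)\TargetM_\psi(\dd w).
\end{equation*}
Hence
\begin{equation*}
d_H^2(\TargetM,\pma) = \EE_{W\sim\TargetM_\psi}\left[d_H^2(\TargetM^W,\RefM^W)\right].
\end{equation*}
This plays the role that \cref{eqn:kl-irreducible} played in \cref{prop:kl-lsi}. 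It therefore suffices to prove, for \(\TargetM_\psi\)-a.e.\ \(w\),
\begin{equation*}
d_H^2(\TargetM^w,\RefM^w)\le \frac{\kappa}{4}\EE_{\TargetM^w}\left[\|P_{\W^\perp}\nabla\Phi\|^2\right].
\end{equation*}

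\emph{Fiber estimate.} Set \(h:=\sqrt{\dd\TargetM^w/\dd\RefM^w}=e^{(\tilde\phi(w)-\Phi)/2}\). Then \(\EE_{\RefM^w}[h^2]=1\) and
\begin{equation*}
d_H^2(\TargetM^w,\RefM^w)=1-\EE_{\RefM^w}[h].
\end{equation*}
The key elementary inequality is
\begin{equation*}
1-\EE[h]\le 1-(\EE h)^2=\Var_{\RefM^w}[h],
\end{equation*}
which holds because \(0\le \EE h\le 1\) by Jensen's inequality. Next, \(h\) depends on \(u\) only through \(\Phi\) on the fiber, with \(\tilde\phi(w)\) constant there. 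So its fiber gradient is
\begin{equation*}
P_{\W^\perp}\nabla h=-\tfrac12 h\,P_{\W^\perp}\nabla\Phi.
\end{equation*}
Applying the conditional Poincar\'e inequality of \cref{def:cond-fi} gives
\begin{equation*}
\Var_{\RefM^w}[h]\le \kappa\,\EE_{\RefM^w}\left[\tfrac14 h^2\|P_{\W^\perp}\nabla\Phi\|^2\right]=\frac{\kappa}{4}\EE_{\TargetM^w}\left[\|P_{\W^\perp}\nabla\Phi\|^2\right],
\end{equation*}
since \(h^2\,\dd\RefM^w=\dd\TargetM^w\). Integrating over \(W\sim\TargetM_\psi\) and using the tower property yields the claim.

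\emph{Main obstacle.} The only delicate point is justifying that \(h\in H^1(\RefM^w)\), so that the Poincar\'e inequality applies. I will assume the right-hand side is finite, since otherwise there is nothing to prove. Under that assumption, \(h\) is bounded because \(\Phi\ge 0\) and \(Z_w>0\) a.e., and \(\Phi\) is Fr\'echet differentiable. I then expect \(h\) to lie in \(H^1\), possibly after a truncation or approximation argument (replace \(\Phi\) by \(\Phi\wedge n\) and pass to the limit by monotone convergence). As in the remark preceding \cref{def:cond-fi}, this is a standard Sobolev-class technicality rather than a genuine gap.
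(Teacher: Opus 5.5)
Your proposal is correct and follows the paper's proof step for step. The shared steps are the disintegration of the Hellinger affinity over the common marginal \(\TargetM_\psi\), the choice \(h=(\dd\TargetM^w/\dd\RefM^w)^{1/2}\) with fiber gradient \(-\tfrac12 h\,P_{\W^\perp}\nabla\Phi\), the bound \(1-\EE h\le 1-(\EE h)^2=\Var h\), and the conditional Poincar\'e inequality combined with \(h^2\,\dd\RefM^w=\dd\TargetM^w\). Your extra remark on \(h\in H^1(\RefM^w)\) covers a regularity point the paper leaves implicit, and it is harmless: \(h\) is bounded, and finiteness of the right-hand side gives square-integrability of its fiber gradient.
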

\begin{proof}
    Since \(\TargetM\) and \(\pma\) share the \(\psi\)-marginal and the
    conditionals of \(\pma\) are \(\RefM^w\), the Hellinger affinity
    disintegrates, and in parallel to \cref{eqn:kl-irreducible},
    \(d_H^2(\TargetM, \pma) =
    \EE_{W\sim\TargetM_\psi}[d_H^2(\TargetM^W, \RefM^W)]\).
    On the fiber over \(w\), let
    \(f := (\dd\TargetM^w/\dd\RefM^w)^{1/2} = e^{(\tilde\phi(w) - \Phi)/2}\),
    so that \(\EE_{\RefM^w}[f^2] = 1\) and
    \(P_{\W^\perp}\nabla f = -\tfrac12 f P_{\W^\perp}\nabla\Phi\).
    Since \(\EE_{\RefM^w}[f] \leq 1\), the conditional Poincar\'e
    inequality gives
    \begin{equation*}
        d_H^2(\TargetM^w, \RefM^w)
        = 1 - \EE_{\RefM^w}[f]
        \leq 1 - \left(\EE_{\RefM^w}[f]\right)^2
        = \Var_{\RefM^w}(f)
        \leq \frac{\kappa}{4} \EE_{\RefM^w}\left[f^2 \|P_{\W^\perp}\nabla\Phi\|^2\right],
    \end{equation*}
    and \(f^2 \dd\RefM^w = \dd\TargetM^w\) turns the right-hand side
    into \(\frac{\kappa}{4} \EE_{\TargetM^w}[\|P_{\W^\perp}\nabla\Phi\|^2]\);
    integrate over \(W \sim \TargetM_\psi\).
\end{proof}

\begin{remark}[Gaussian and Mat\'ern references]\label{rem:lsi-gauss}
For Gaussian \(\RefM\) and bounded linear \(\psi\), the conditional
log-Sobolev inequality holds with \(\kappa = \|\Sigma_{u \mid w}\|\),
the operator norm of the
conditional covariance
\(\Sigma_{u \mid w} = \Sigma_{uu} - \Sigma_{u\psi}\Sigma_{\psi\psi}^{-1}\Sigma_{\psi u}\)
of \cref{eq:gauss-cond}. Indeed, the
conditionals \(\RefM^w = \RefM(\cdot \mid \psi(u) = w)\) are Gaussian
with this covariance, and for a Gaussian measure an upper bound on the
covariance operator is a lower bound on the strong convexity of the
associated potential, so the inequality follows from Bakry--\'Emery
theory \cite{bakry2014analysis} (for a direct statement via the Cameron--Martin derivative, see
\cite{bogachev2015gaussian}); the fiber gradient \(P_{\W^\perp}\nabla\)
is the natural Dirichlet form here because \(\Sigma_{u \mid w}\) ranges
in \(\ker\psi\), i.e., \(\psi\Sigma_{u \mid w} = 0\). Since
\(\Sigma_{u \mid w} \preceq \Sigma_{uu}\), one always has
\(\kappa \leq \|\Sigma_{uu}\|\), but the conditional constant improves
as \(\psi\) grows. For instance, let \(\U = L^2(\Omega)\) with
\(\Omega \subset \R^n\) bounded, \(\RefM\) be a Mat\'ern-\(\nu\)
Gaussian process with \(\nu > n/2\), and
\(\psi(u) = (u(x_i))_{i=1}^d\) be point evaluations on discrete sets with
mesh norm \(h_d\). The conditional variance function is the kriging
variance, bounded by \(C h_d^{2\nu}\) uniformly on
\(\Omega\) \cite[eq.~65]{kanagawa2018gaussianprocesseskernelmethods}, so
\(\kappa_d \leq \tr \Sigma_{u \mid w} \leq C|\Omega|h_d^{2\nu} \to 0\).
\end{remark}

\begin{remark}[Weighted forms]\label{rem:weighted-fi}
    Both proofs use the conditional inequalities only through the
    Dirichlet form, and apply verbatim when the gradient is weighted
    by a positive operator: if, for \(\RefM_\psi\)-almost every \(w\)
    and a positive operator \(\Gamma\) (possibly depending on \(w\)),
    \begin{equation*}
        \mathrm{Ent}_{\RefM^w}\left(h^2\right) \leq 2
        \EE_{\RefM^w}\left[\langle \nabla h,\ \Gamma \nabla h\rangle\right],
    \end{equation*}
    then
    \begin{equation*}
        \kl(\TargetM, \pma) \leq \tfrac12
        \EE_\TargetM\left[\langle\nabla\Phi, \Gamma\nabla\Phi\rangle\right],
    \end{equation*}
    and the Poincar\'e version likewise gives
    \begin{equation*}
        d_H^2(\TargetM, \pma) \leq \tfrac14
        \EE_\TargetM\left[\langle\nabla\Phi, \Gamma\nabla\Phi\rangle\right].
    \end{equation*}
    The scalar inequalities of \cref{def:cond-fi} are the case
    \(\Gamma = \kappa P_{\W^\perp}\). For Gaussian \(\RefM\) the
    conditionals satisfy the weighted inequality with constant one and
    \(\Gamma = \Sigma_{u \mid w}\), the conditional covariance, by the
    same Bakry--\'Emery argument as in \cref{rem:lsi-gauss}. Since
    \(\Sigma_{u \mid w}\) does not depend on \(w\),
    \begin{equation*}
        \kl(\TargetM, \pma) \leq \tfrac12
        \EE_\TargetM\left[\langle\nabla\Phi,\ \Sigma_{u\mid w}
        \nabla\Phi\rangle\right].
    \end{equation*}
\end{remark}

Both bounds depend on \(\psi\) only through a quadratic form in
\(\nabla\Phi\), which is therefore the natural objective for choosing
\(\psi\). Suppose that \(\RefM\) is Gaussian with covariance \(\Gamma\)
or, more generally, has a log-concave density with respect to such a
Gaussian, so that Bakry--\'Emery theory gives a log-Sobolev inequality
with weight operator \(\Gamma\).
The Bakry--\'Emery condition survives conditioning, since restricting
that density to a fiber of a linear \(\psi\) preserves its
log-concavity and the conditionals then satisfy the
weighted inequalities of \cref{rem:weighted-fi} with the weight
\(\Gamma_\psi := \Gamma - \Gamma\psi^*(\psi\Gamma\psi^*)^{-1}\psi\Gamma\),
the Schur complement of \(\Gamma\) onto \(\ker\psi\). For Gaussian
\(\RefM\) this is the conditional covariance \(\Sigma_{u \mid w}\) of
\cref{eq:gauss-cond}. The resulting bound is a residual trace. Writing
\(\Gamma_\psi = \Gamma^{1/2}(I - \tilde{P})\Gamma^{1/2}\), where
\(\tilde{P}\) is the orthogonal projection onto
\(\Range(\Gamma^{1/2}\psi^*)\),
\begin{equation}\label{eqn:diagnostic}
    \EE_\TargetM\left[\langle\nabla\Phi,\ \Gamma_\psi
    \nabla\Phi\rangle\right]
    = \tr\left(\Gamma_\psi M_\TargetM\right)
    = \tr\left( (I - \tilde{P}) D (I - \tilde{P}) \right),
\end{equation}
where \(M_\TargetM := \EE_\TargetM[\nabla\Phi \nabla\Phi^*]\), assumed
bounded, and \(D := \Gamma^{1/2} M_\TargetM \Gamma^{1/2}\) is its
prior-whitened form, which is then trace class with
\(\tr D = \EE_\TargetM\|\Gamma^{1/2}\nabla\Phi\|^2\). As
\(\psi\) ranges over rank-\(d\) bounded linear maps, \(\tilde{P}\)
ranges over the rank-\(d\) orthogonal projections with range in
\(\Range(\Gamma^{1/2})\), so the Ky Fan principle
gives
\begin{equation*}
    \inf_{\operatorname{rank}\psi = d}
    \tr\left(\Gamma_\psi M_\TargetM\right)
    = \sum_{j > d} \lambda_j(D),
\end{equation*}
where \(\lambda_1 \geq \lambda_2 \geq \cdots\) are the eigenvalues of
\(D\). Eigenvectors \(D v_j = \lambda_j v_j\) with \(\lambda_j > 0\)
lie automatically in \(\Range(\Gamma^{1/2})\), so the infimum is
attained by taking
\(\psi_j(u) = \langle \Gamma^{-1/2} v_j, u\rangle\), or equivalently, by
the leading solutions of the generalized eigenproblem
\(M_\TargetM \varphi = \lambda \Gamma^{-1}\varphi\). This
prior-weighted eigenproblem is precisely the subspace-selection
criterion of \cite{certified}, and the original likelihood-informed
subspace construction \cite{lis} solves the same eigenproblem with the
Gauss--Newton Hessian of the data misfit in place of \(M_\TargetM\).
When \(\Phi\) is \(C\)-Lipschitz, \(M_\TargetM \preceq C^2
I\) and \cref{eqn:diagnostic} is at most \(C^2 \tr\Gamma_\psi\). In the
Mat\'ern example of \cref{rem:lsi-gauss}, this is \(C^2\) times
the integrated kriging variance, which is of order \(h_d^{2\nu}\), matching the
rate of \cref{cor:kl-jeffreys}.

Finally, a Talagrand transport inequality for the conditionals
converts the Kullback--Leibler bound into a Wasserstein bound,
connecting the results of this subsection back to those of
\cref{subsec:wasserstein}.

\begin{proposition}[\(\kl\) to \(W_2\)]\label{prop:t2}
    Suppose that for \(\RefM_\psi\)-almost every \(w\), \(\RefM^w\) satisfies the Talagrand inequality \(W_2^2(\rho, \RefM^w) \leq 2\kappa_T \kl(\rho, \RefM^w)\) for all \(\rho\). Then
    \begin{equation*}
        W_2^2(\pma, \TargetM) \leq 2\kappa_T \kl(\TargetM, \pma).
    \end{equation*}
\end{proposition}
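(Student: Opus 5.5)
We combine \cref{cor:marginal-Wp} at \(p = 2\) with the fiberwise Talagrand inequality, and then integrate using the identity \cref{eqn:kl-irreducible}. Since \(\pma_\psi = \TargetM_\psi\), \cref{cor:marginal-Wp} gives
\begin{equation*}
    W_2^2(\pma, \TargetM) \leq \EE_{W\sim\TargetM_\psi}\left[W_2^2\left(\RefM^W, \TargetM^W\right)\right].
\end{equation*}
For each \(w\), we apply the Talagrand hypothesis with \(\rho = \TargetM^w\). This gives \(W_2^2(\TargetM^w, \RefM^w) \leq 2\kappa_T \kl(\TargetM^w, \RefM^w)\). Symmetry of \(W_2\) lets us write the left-hand side as \(W_2^2(\RefM^w, \TargetM^w)\). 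Integrating against \(\TargetM_\psi\) and using \cref{eqn:kl-irreducible}, \(\kl(\TargetM,\pma) = \EE_{W\sim\TargetM_\psi}[\kl(\TargetM^W, \RefM^W)]\), yields the claim.

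The main point requiring care is a null-set issue. The Talagrand inequality is assumed for \(\RefM_\psi\)-almost every \(w\), but we integrate against \(\TargetM_\psi\). This is harmless because \(\TargetM \ll \RefM\) implies \(\TargetM_\psi \ll \RefM_\psi\); indeed \(\dd\TargetM_\psi/\dd\RefM_\psi = Z_w/Z\). So the exceptional set is also \(\TargetM_\psi\)-null.

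We must also check the finite-moment hypothesis of \cref{prop:Wp-nested}. If \(\kl(\TargetM,\pma) = \infty\), the bound is trivial. Otherwise, Talagrand forces \(W_2(\RefM^w,\TargetM^w) < \infty\) for a.e.\ \(w\). If needed, the coupling construction in the proof of \cref{prop:Wp-nested} can be applied directly, without the moment assumption, since it only builds a coupling and bounds its cost. Measurability of the fiberwise optimal couplings is already handled there.
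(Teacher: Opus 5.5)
Your proposal is correct and follows the paper's proof: \cref{cor:marginal-Wp} at \(p=2\), then the fiberwise Talagrand inequality with \(\rho = \TargetM^w\), then the identity \cref{eqn:kl-irreducible}. Your extra remarks are sound refinements that the paper leaves implicit, namely the null-set transfer via \(\TargetM_\psi \ll \RefM_\psi\) and the observation that the coupling construction behind \cref{prop:Wp-nested} does not need the moment hypothesis.
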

\begin{proof}
    By \cref{cor:marginal-Wp} with \(p = 2\), the Talagrand inequality applied on each fiber with \(\rho = \TargetM^w\), and the identity \cref{eqn:kl-irreducible},
    \begin{equation*}
        W_2^2(\pma, \TargetM)
        \leq \EE_{W\sim\TargetM_\psi}\left[W_2^2\left(\RefM^W, \TargetM^W\right)\right]
        \leq 2\kappa_T \EE_{W\sim\TargetM_\psi}\left[\kl\left(\TargetM^W, \RefM^W\right)\right]
        = 2\kappa_T \kl(\TargetM, \pma).
    \end{equation*}
\end{proof}

For Gaussian conditionals, the Poincar\'e, log-Sobolev, and Talagrand inequalities all hold with the same constant \(\kappa = \kappa_T = \|\Sigma_{u\mid w}\|\), which does not depend on \(w\). In particular, all of the Kullback--Leibler bounds in \cref{subsec:PMA-error} additionally yield Wasserstein bounds with this extra factor of \(2 \kappa_T\) when the assumptions of \cref{prop:t2} hold.

\subsection{Discussion}\label{subsec:analysis-discussion}

The results of this section reduce the function-space error of
\cref{alg:reduced-gen,alg:reduced-cond-gen,alg:vi}
to a combination of error from approximation in \(\psiclass\),
controlled by the moment and functional-inequality bounds of
\cref{subsec:PMA-error,subsec:functional}, and marginal
fitting terms such as \(W_p(\hat{\eta}_\psi, \TargetM_\psi)\), the
error of a generative model on \(\R^d\).
Error bounds for such terms
are the subject of the finite-dimensional generative modeling
literature (for the flow-matching and interpolant models used in
\cref{sec:numerics}, see
\cite{albergo2025stochastic,benton2023error}), and
transfer to function space through
\cref{prop:lift-stability,cor:learned-cond} at the price of the
constant \(K\), or through the chain rule
\cref{eqn:kl-chain-psi} for
Kullback--Leibler control.

For fixed reference distributions,
the marginal terms also set the statistical difficulty of the method.
The dimension that governs this error is not
the dimension of the discretization of \(u\), but the size of \(\psi\),
which can be chosen to match the information given by the
likelihood.
The infinite-dimensional remainder is carried by the
conditioned-sampling step, which for tractable reference measures is
exact and incurs no statistical cost. When the likelihood is
informative only on a low-dimensional subspace, the method thereby avoids errors
that grow with the ambient dimension.
This is consistent with the minimax analysis of
\cite{ponnoprat2025minimax} for the estimation of transport maps
between infinite-dimensional spaces, where error decays only poly-logarithmically in the sample size
without stronger assumptions of low-dimensional structure. 

This structure is particularly relevant when the data itself is
observed at a fixed resolution. Samples seen only through \(\psi\),
such as pixel averages or pointwise measurements, determine the target
only through its \(\psi\)-marginal. Any model of the underlying
function must supply the infinite-dimensional structure by assumption.
Measures in \(\psiclass\) fit the marginal distribution at the observed resolution and complete it to a measure on functions
using the reference conditionals. The resulting model can be sampled at any
resolution by conditioned sampling, and when the underlying law
admits a density with respect to the reference, the results of this
section quantify the error of this completion.

\appendix
\crefname{appendix}{Appendix}{Appendices}
\Crefname{appendix}{Appendix}{Appendices}
\crefalias{section}{appendix}
\section{Proofs for the Gaussian optimal transport results}\label{app:ot-proofs}

The trace norm bound in \cref{thm:ot-structure} relies on the
following inversion bound for positive perturbations of a self-adjoint
involution, a special case of
\cite[Theorem~3]{kostrykin2007perturbation}. We include its short
proof.

\begin{lemma}\label{lem:signed-pencil}
    Let \(J\) be a self-adjoint unitary operator on a Hilbert space with
    spectral projections \(P_\pm\), so that \(J = P_+ - P_-\), and let
    \(K \succeq 0\) be a bounded operator with
    \(P_-KP_- \preceq (1-c) I\) for some \(c \in (0, 1]\). Then
    \(J + K\) is invertible, with
    \begin{equation*}
        \|(J + K)^{-1}\| \leq \frac{1}{c}.
    \end{equation*}
\end{lemma}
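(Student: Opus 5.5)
The plan is to prove a coercivity estimate of the form \(\|(J+K)x\| \geq c\|x\|\) for every \(x\), by testing \(J+K\) against the reflected vector \(Jx\), and then to use self-adjointness to upgrade this to invertibility with the stated bound.

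\textbf{Block decomposition.} Decompose the Hilbert space as \(\Range(P_+) \oplus \Range(P_-)\) and write \(x = u + v\) with \(u = P_+x\) and \(v = P_-x\). In this decomposition
\begin{equation*}
J + K = \begin{pmatrix} I + K_{++} & K_{+-} \\ K_{-+} & -(I - K_{--}) \end{pmatrix},
\qquad K_{\pm\mp} := P_\pm K P_\mp .
\end{equation*}
Since \(K\) is self-adjoint, \(K_{-+} = K_{+-}^*\). The hypotheses give two one-sided bounds on the diagonal blocks. First, \(K \succeq 0\) implies \(I + K_{++} \succeq I\) on \(\Range(P_+)\). Second, \(P_-KP_- \preceq (1-c)I\) implies \(I - K_{--} \succeq cI\) on \(\Range(P_-)\). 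The operator is therefore quasi-definite, with the upper-left block bounded below by \(I\) and the lower-right block bounded above by \(-cI\).

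\textbf{Key computation.} Pair \((J+K)x\) with \(Jx = u - v\), which satisfies \(\|Jx\| = \|x\|\). The result is
\begin{equation*}
\operatorname{Re}\langle (J+K)x, u - v\rangle
= \langle (I+K_{++})u,u\rangle + \langle (I-K_{--})v,v\rangle
+ \operatorname{Re}\bigl(\langle K_{+-}v,u\rangle - \langle K_{-+}u,v\rangle\bigr).
\end{equation*}
The cross terms cancel because \(\langle K_{-+}u, v\rangle = \overline{\langle K_{+-}v, u\rangle}\), so their difference is purely imaginary; in a real Hilbert space it is exactly zero. Using the two diagonal bounds,
\begin{equation*}
\operatorname{Re}\langle (J+K)x, Jx\rangle \geq \|u\|^2 + c\|v\|^2 \geq c\|x\|^2,
\end{equation*}
where the last step uses \(c \leq 1\). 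Cauchy--Schwarz then gives \(\|(J+K)x\|\,\|x\| \geq c\|x\|^2\), that is, \(\|(J+K)x\| \geq c\|x\|\).

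\textbf{Invertibility.} The lower bound shows that \(J+K\) is injective and has closed range. Because \(J+K\) is bounded and self-adjoint, the orthogonal complement of its range equals its kernel, which is trivial. Hence \(J+K\) is surjective and therefore invertible, and the lower bound gives \(\|(J+K)^{-1}\| \leq 1/c\) directly.

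\textbf{Main obstacle.} The only delicate point is the choice of test vector. Pairing with \(x\) itself fails, since \(J+K\) is indefinite. Pairing with \(Jx\) flips the sign of the negative block, and self-adjointness of \(K\) is exactly what makes the off-diagonal contributions cancel. Once this pairing is chosen, the remaining steps are routine.
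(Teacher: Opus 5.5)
Your proof is correct and follows the paper's argument essentially step for step. The paper also pairs $(J+K)x$ with $Jx$, cancels the purely imaginary cross terms using self-adjointness of $K$, bounds the remainder below by $c\|x\|^2$ (via $\langle x,(I-P_-KP_-)x\rangle$ rather than your $\|u\|^2+c\|v\|^2$), and concludes by Cauchy--Schwarz and self-adjointness; your closed-range/trivial-kernel argument for surjectivity just spells out the step the paper states in one line.
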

\begin{proof}
    Let \(x = x_+ + x_-\) with \(x_{\pm} = P_{\pm}x\). We lower bound \(\|(J+K)x\|\). First, apply Cauchy--Schwarz to see that
    \begin{equation*}
        \left|\left\langle Jx,(J+K)x\right\rangle \right|\leq\|Jx\|\|\left(J+K\right)x\|=\|x\|\|\left(J+K\right)x\|.
    \end{equation*}
    We now lower bound the quadratic form.
    \begin{align*}
        \left|\left\langle Jx,(J+K)x\right\rangle \right| &=\left|\left\langle x_{+}-x_{-},x_{+}-x_{-}+Kx_{+}+Kx_{-}\right\rangle \right| \\
        &= \left|\|x\|^{2}+\left\langle Kx_{+},x_{+}\right\rangle -\left\langle Kx_{-},x_{-}\right\rangle +\left\langle x_{+},Kx_{-}\right\rangle -\left\langle x_{-},Kx_{+}\right\rangle \right| \\
        &\geq \operatorname{Re}\left(\|x\|^{2}+\left\langle Kx_{+},x_{+}\right\rangle -\left\langle Kx_{-},x_{-}\right\rangle +\left\langle x_{+},Kx_{-}\right\rangle -\left\langle x_{-},Kx_{+}\right\rangle \right) \\
        &=
        \operatorname{Re}\left(\|x\|^{2}+\left\langle Kx_{+},x_{+}\right\rangle -\left\langle Kx_{-},x_{-}\right\rangle +\left\langle x_{+},Kx_{-}\right\rangle -\overline{ \left\langle x_{+},Kx_{-}\right\rangle }\right) \\
        &= \|x\|^{2}+\left\langle Kx_{+},x_{+}\right\rangle -\left\langle Kx_{-},x_{-}\right\rangle \\
        &\geq\|x\|^{2}-\left\langle Kx_{-},x_{-}\right\rangle \\
        &=\left\langle x,\left(I-P_{-}KP_{-}\right)x\right\rangle \\
        &\geq c\|x\|^{2},
    \end{align*}
    where we used that \(\left\langle Kx_{+},x_{+}\right\rangle\geq 0\) because \(K\succeq0\), and that \(\operatorname{Re}(z - \overline{z})=0\) for any \(z \in \mathbb{C}\). Thus \(\|(J+K)x\| \geq c\|x\|\). Because \(J+K\) is self-adjoint, we may conclude that it is boundedly invertible with \(\|(J+K)^{-1}\|\leq \frac{1}{c}\).
\end{proof}

\begin{proof}[Proof of \cref{thm:ot-structure}]

    We first prove equivalence of the two measures.
    Since \(\Range(\Sigma^{1/2})\) is dense and \(\Sigma'\) is a covariance operator, we have that \(I + H \succeq 0\).
    Since \(I+H\) is injective and \(H\) is compact, \(I+H\) is boundedly invertible. With \(c = \min\left(\|(I+H)^{-1}\|^{-1},1\right) \in (0,1]\), we have that \(I + H \succeq c\).\footnote{When \(\V\) is infinite dimensional, \(\|(I+H)^{-1}\|\geq 1\) automatically because \(0\) is an accumulation point of the spectrum of \(H\), so that \(\kappa = \|(I+H)^{-1}\|\).}
    Thus,
    \begin{equation}\label{eqn:H-lower-upper}
        c I \preceq I+H \preceq (1+\|H\|) I.
    \end{equation}
    The lower bound implies that \(\Sigma'\) is injective, so \(\eta\) is nondegenerate.
    Conjugating each piece of \cref{eqn:H-lower-upper} and applying Douglas' lemma \cite[Theorem~1]{douglas1966majorization} yields that
    \(\Range(\Sigma^{1/2}) = \Range(\Sigma'^{1/2})\). Finally, since trace class operators are Hilbert--Schmidt, the Feldman--H\'ajek theorem \cite{bogachev2015gaussian} gives that \(\RefM\) and \(\eta\) are equivalent.

    We now construct the OT map \(T(u) = Au\), first defining it on the dense subspace \(\Range(\Sigma^{1/2})\) before extending it to a continuous map on all of \(\V\).
    Set
    \begin{equation}\label{eqn:M-def}
    M := \Sigma^{1/2}\Sigma'\Sigma^{1/2}
    = \Sigma^2 + \Sigma H\Sigma = \Sigma(I + H)\Sigma.
    \end{equation}

    Applying \cref{eqn:H-lower-upper} immediately gives
    \begin{equation*}
        c \Sigma^2 \preceq M \preceq (1 + \|H\|)\,\Sigma^2.
    \end{equation*}

    Since the square root is operator monotone,
    \begin{equation}
    \sqrt{c}\,\Sigma \preceq M^{1/2}
    \preceq \sqrt{1 + \|H\|}\,\Sigma.
    \end{equation}

    Now define
    \(A := \Sigma^{-1/2}M^{1/2}\Sigma^{-1/2}\) on
    \(\Range(\Sigma^{1/2})\).
    In finite dimensions, this is the closed form of the optimal
    transport map between centered Gaussian measures
    \cite{knott1984optimal}, which extends to Gaussian measures on a
    separable Hilbert space as a possibly unbounded map
    \cite{cuesta1996lower}.
    For \(x = \Sigma^{1/2}y\) we have
    \(\langle x, Ax\rangle = \langle y, M^{1/2}y\rangle
    \in [\sqrt{c}\,\|x\|^2, \sqrt{1+\|H\|}\,\|x\|^2]\), so \(A\)
    extends to a bounded positive operator on \(\V\) with a
    bounded inverse.

    In particular, observe that \(\Sigma^{1/2}A\Sigma^{1/2} = M^{1/2}\). Squaring this expression, we obtain
    \begin{equation*}
    \Sigma^{1/2}(A\Sigma A)\Sigma^{1/2} = M
    = \Sigma^{1/2}\Sigma'\Sigma^{1/2},
    \end{equation*}
    which, along with the injectivity of \(\Sigma^{1/2}\) and the density of
    its range, establishes that \(A\Sigma A = \Sigma'\).

    For uniqueness, if \(\tilde A\) is another bounded
    positive solution, then
    \(\Sigma^{1/2}\tilde A\Sigma^{1/2}\) is a positive square root of
    \(M\), hence equal to \(M^{1/2}\), and \(\tilde A = A\) by density.

    For optimality, \(u \mapsto Au\) is the gradient of the convex function
    \(\tfrac12\langle u, Au\rangle\) and pushes \(\RefM\) to \(\eta\),
    so its coupling is cyclically monotone and it is the optimal
    transport map \cite[Section~6.2.3]{ambrosio2008gradient}.

    We now derive the integral representation \cref{eqn:ot-structure-int}.
    Start by applying the integral formula
    \begin{equation*}
        X^{1/2} = \frac{2}{\pi}\int_0^\infty
        \left[I - t^2(X + t^2)^{-1}\right]\dd t
    \end{equation*}
    to \(M\) and \(\Sigma^2\) and subtracting, which gives
    \begin{equation}\label{eqn:first-integral}
    M^{1/2} - \Sigma = \frac{2}{\pi}\int_0^\infty
    t^2\left[R_t - (M+t^2)^{-1}\right]\dd t.
    \end{equation}
    Expanding \(R_t\) and \(M\), and applying the second resolvent identity, we get
    \begin{align*}
        R_t - (M + t^2)^{-1} &= (\Sigma^2 + t^2)^{-1} - (\Sigma^2 + \Sigma H \Sigma + t^2)^{-1} \\
        &= (\Sigma^2 + t^2)^{-1} (\Sigma H \Sigma) (\Sigma^2 + t^2 + \Sigma H \Sigma)^{-1} \\
        &= R_t (\Sigma H \Sigma) \left[(\Sigma^2 + t^2) (I + R_t\Sigma H \Sigma)\right]^{-1} \\
        &= (R_t \Sigma) (H \Sigma) (I + (R_t\Sigma) (H \Sigma))^{-1} R_t.
    \end{align*}
    We now apply the push-through identity \((I + AB)^{-1}A = A(I + BA)^{-1}\) twice, once with \(H\Sigma\) and \(R_t\Sigma\), and again with \(H\) and \(G_t\),
    obtaining
    \begin{align}
        R_t - (M + t^2)^{-1} &= (R_t \Sigma) (I + H \underbrace{\Sigma R_t\Sigma}_{=G_t} )^{-1} (H \Sigma) R_t \nonumber\\
        &= (R_t \Sigma) (I + H G_t )^{-1} H \Sigma R_t \nonumber\\
        &= R_t \Sigma H(I + G_t H )^{-1}\Sigma R_t. \label{eqn:final-diff}
    \end{align}
    Finally, plug \cref{eqn:final-diff} into \cref{eqn:first-integral} and conjugate each integrand by \(\Sigma^{-1/2}\), which commutes with \(R_t\), to obtain the desired integrands of \cref{eqn:ot-structure-int}.
    Define
    \begin{equation}\label{eqn:S-def}
        S := \frac{2}{\pi} \int_{0}^{\infty} t^2
        R_t \Sigma^{1/2}\, H(I + G_t H )^{-1}\,\Sigma^{1/2} R_t \dd t
    \end{equation}
    In order to conclude that \(S = A - I\) and that it is trace class, it remains to establish
    trace-norm convergence of the integral.

    We first bound the middle factor \(H (I + G_t H)^{-1}\).
    Restrict to the closure of \(\Range(H)\),
    on which \(|H|^{1/2}\) is injective so we may write the factorization
    \(H = |H|^{1/2}J|H|^{1/2}\) with \(J\) the unitary sign of \(H\)
    given by the Borel functional calculus
    \cite[Theorem~VII.2]{reedsimon1980functional}, satisfying
    \(J^{-1} = J\) and \(J^2 = I\).
    Set \(K_t := |H|^{1/2}G_t|H|^{1/2} \succeq 0\). Expanding and applying the push-through identity, we have
    \begin{align*}
    H(I + G_t H)^{-1} &= |H|^{1/2}J|H|^{1/2} \left(I + G_t |H|^{1/2}J|H|^{1/2}\right)^{-1}
    \\
    &= |H|^{1/2} \left(I + J|H|^{1/2} G_t |H|^{1/2}\right)^{-1} J|H|^{1/2}\\
    &= |H|^{1/2} \left(I + J K_t\right)^{-1} J|H|^{1/2}\\
    &= |H|^{1/2} \left(J^2 + J K_t\right)^{-1} J|H|^{1/2}\\
    &= |H|^{1/2} \left(J + K_t\right)^{-1} |H|^{1/2}.
    \end{align*}
    Since \(0 \preceq G_t \preceq I\), we have
    \(0 \preceq K_t \preceq |H|\), and on the negative subspace of
    \(J\), the lower bound of \cref{eqn:H-lower-upper} reads
    \(|H| \preceq (1 - c)I\), so \(P_-K_tP_- \preceq (1-c)P_-\).
    \cref{lem:signed-pencil} applied to \(J\) and \(K_t\) then gives
    \(\|(J + K_t)^{-1}\| \leq \tfrac{1}{c}\), uniformly in \(t\). As \(1/c = \kappa\), we obtain
    \(\|(J + K_t)^{-1}\| \leq \kappa\) with \(\kappa\) as in
    \cref{eqn:norm-bound}.

    We now have that each integrand \(t^2
        R_t \Sigma^{1/2}\, H(I + G_t H )^{-1}\,\Sigma^{1/2} R_t\) of \cref{eqn:S-def} is
    a trace class operator depending
    continuously on \(t\) in trace norm, since
    \(\Sigma^{1/2}|H|^{1/2}\) is Hilbert--Schmidt and
    \(t \mapsto R_t\) is norm continuous.
    Applying the H\"older inequality
    \(\|XYZ\|_1 \leq \|X\|_{\mathrm{HS}}\|Y\|\|Z\|_{\mathrm{HS}}\),
    we obtain the bound
    \begin{align*}
    \|
    R_t \Sigma^{1/2} H(I + G_t H )^{-1} \Sigma^{1/2} R_t\|_1
    &=
    \|
    \left(R_t \Sigma^{1/2} |H|^{1/2}\right) (J+K_t)^{-1}
    \left(|H|^{1/2} \Sigma^{1/2} R_t\right)\|_1
    \\
    &\leq \kappa \|R_t\Sigma^{1/2}|H|^{1/2}\|_{\mathrm{HS}}^2.
    \end{align*}

    By the spectral identity
    \(\int_0^\infty t^2\lambda(\lambda^2 + t^2)^{-2}\dd t = \pi/4\)
    for every \(\lambda > 0\), the positive operators
    \(t^2\,|H|^{1/2}\Sigma^{1/2}R_t^2\Sigma^{1/2}|H|^{1/2}\) integrate to
    \(\tfrac{\pi}{4}|H|\) in trace norm. Their traces are nonnegative, so the
    trace and the integral may be exchanged, giving
    \begin{align*}
        \int_0^\infty t^2\,
        \|R_t\Sigma^{1/2}|H|^{1/2}\|_{\mathrm{HS}}^2\,\dd t
        &= \int_0^\infty t^2\,\operatorname{tr}\left(
        |H|^{1/2}\,\Sigma^{1/2}R_t^2\Sigma^{1/2}\,|H|^{1/2}
        \right)\dd t \\
        &= \operatorname{tr}\left(\int_0^\infty t^2\,
        |H|^{1/2}\,\Sigma^{1/2}R_t^2\Sigma^{1/2}\,|H|^{1/2}\,\dd t\right) \\
        &= \frac{\pi}{4}\operatorname{tr}|H|.
    \end{align*}
    The trace norms of the integrands therefore admit an integrable
    majorant, so the integral defining \(S\) converges as a Bochner
    integral in the trace class, with
    \(\|S\|_1 \leq \tfrac{2}{\pi}\,\kappa\,\tfrac{\pi}{4}
    \operatorname{tr}|H| = \tfrac{\kappa}{2}\operatorname{tr}|H|\).

    Finally, to see that \(S = A - I\), bounded operators pass through Bochner integrals, so
    \begin{equation*}
        \Sigma^{1/2}S\,\Sigma^{1/2}
        = \frac{2}{\pi}\int_0^\infty
        t^2\left[R_t - (M+t^2)^{-1}\right]\dd t
        = M^{1/2} - \Sigma
        = \Sigma^{1/2}(A - I)\Sigma^{1/2}.
    \end{equation*}
    Because
    \(\Sigma^{1/2}\) is injective with dense range, we have \(S = A - I\),
    thereby establishing \cref{eqn:ot-structure-int}.
\end{proof}

The proof of \cref{thm:ot-rank} uses the following description of the
invariant subspace \(\mathcal{K}\) through resolvents.

\begin{lemma}\label{lem:resolvent-span}
    Let \(\Sigma\) be a bounded positive self-adjoint injective operator
    on \(\V\), let \(\psi : \V \to \R^d\) be bounded, and let
    \(\mathcal{K}\) be the smallest closed \(\Sigma\)-invariant
    subspace containing \(\Range(\psi^*)\). With
    \(R_t := (\Sigma^2 + t^2)^{-1}\),
    \begin{equation*}
        \mathcal{K} = \overline{\operatorname{span}}
        \{R_t\Sigma\psi^*a : t > 0,\ a \in \R^d\}.
    \end{equation*}
\end{lemma}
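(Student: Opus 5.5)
We prove the two inclusions separately, writing \(\mathcal{R} := \overline{\operatorname{span}}\{R_t\Sigma\psi^*a : t>0,\ a\in\R^d\}\).

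\emph{The inclusion \(\mathcal{R}\subseteq\mathcal{K}\).} Since \(\mathcal{K}\) is closed and \(\Sigma\)-invariant, it is invariant under every polynomial in \(\Sigma\), and hence under every norm limit of such polynomials. For fixed \(t>0\), the function \(\lambda\mapsto \lambda(\lambda^2+t^2)^{-1}\) is continuous on the compact spectrum \(\sigma(\Sigma)\subset[0,\|\Sigma\|]\). By Weierstrass approximation and the continuous functional calculus, \(R_t\Sigma\) is therefore a norm limit of polynomials in \(\Sigma\). Consequently \(R_t\Sigma\psi^*a\in\mathcal{K}\) for every \(t>0\) and \(a\in\R^d\). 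Since \(\mathcal{K}\) is closed, taking the closed span gives \(\mathcal{R}\subseteq\mathcal{K}\).

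\emph{The inclusion \(\mathcal{K}\subseteq\mathcal{R}\).} Because \(\mathcal{K}\) is the smallest closed \(\Sigma\)-invariant subspace containing \(\Range(\psi^*)\), it suffices to show that \(\mathcal{R}\) is closed, \(\Sigma\)-invariant, and contains \(\Range(\psi^*)\). It is closed by definition.

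\emph{\(\Sigma\)-invariance.} Rather than verifying this directly, we argue by orthogonal complements. Take \(x\perp\mathcal{R}\), and set \(\mu_a\) to be the finite signed spectral measure of \(\Sigma\) associated to the pair \((\psi^*a,x)\). The orthogonality condition says
\[
\int \frac{\lambda}{\lambda^2+t^2}\,\dd\mu_a(\lambda)=0 \quad\text{for all } t>0.
\]
Since \(\Sigma\) is positive, \(\mu_a\) is supported on \([0,\infty)\), and the map \(\lambda\mapsto\lambda^2\) is injective there. Substituting \(s=t^2\), the condition becomes
\[
\int (\lambda^2+s)^{-1}\,\dd\nu_a(\lambda)=0 \quad\text{for all } s>0,
\]
where \(\dd\nu_a=\lambda\,\dd\mu_a\). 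This is the Stieltjes transform of the pushforward of \(\nu_a\) under \(\lambda\mapsto\lambda^2\). The Stieltjes transform is analytic in \(s\) off \((-\infty,0]\), so its vanishing on \((0,\infty)\) forces it to vanish on \(\mathbb{C}\setminus(-\infty,0]\). Stieltjes inversion then gives \(\nu_a=0\). We conclude that \(\mu_a\) is supported on \(\{0\}\).

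\emph{Concluding.} The atom of \(\mu_a\) at \(0\) is \(\langle P_{\ker\Sigma}\psi^*a,x\rangle\), where \(P_{\ker\Sigma}\) is the spectral projection onto \(\ker\Sigma\). This vanishes because \(\Sigma\) is injective. Hence \(\mu_a=0\), which means \(\langle f(\Sigma)\psi^*a,x\rangle=0\) for every bounded Borel \(f\). Taking \(f(\lambda)=\lambda^k\) gives \(x\perp\Sigma^k\psi^*a\) for all \(k\ge 0\) and all \(a\). Therefore \(x\perp\mathcal{K}\), since \(\mathcal{K}\) is the closed span of the Krylov vectors \(\Sigma^k\psi^*a\). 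Thus \(\mathcal{R}^\perp\subseteq\mathcal{K}^\perp\), which is equivalent to \(\mathcal{K}\subseteq\mathcal{R}\). This argument bypasses the need to check invariance or membership of \(\psi^*a\) directly.

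\emph{Main obstacle.} The delicate point is the uniqueness step: we must justify that a finite signed measure on \([0,\infty)\) is determined by its Stieltjes transform on \((0,\infty)\). This needs the analytic continuation argument above, together with the observation that the pushforward under \(\lambda\mapsto\lambda^2\) loses no information because \(\lambda\ge0\) on the support. Injectivity of \(\Sigma\) is used exactly once, to rule out the atom at zero. Without it, the extra factor \(\Sigma\) in \(R_t\Sigma\) would annihilate the component of \(\psi^*a\) in \(\ker\Sigma\), and the identity would fail.
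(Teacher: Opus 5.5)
Your proof is correct, but for the inclusion \(\mathcal{K}\subseteq\mathcal{R}\) it takes a genuinely different route from the paper.

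For \(\mathcal{R}\subseteq\mathcal{K}\), your polynomial approximation of \(\lambda(\lambda^2+t^2)^{-1}\) on \(\sigma(\Sigma)\) is essentially equivalent to the paper's argument. There, \(\mathcal{K}\) reduces \(\Sigma\), so \(R_t\) preserves \(\mathcal{K}\).

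For \(\mathcal{K}\subseteq\mathcal{R}\), the paper works constructively on the span itself, in three steps:
\begin{itemize}
\item \(b=\Sigma\psi^*a\) lies in the span as the limit of \(t^2R_tb\).
\item The identity \(\Sigma^2R_tb=b-t^2R_tb\) makes the span \(\Sigma^2\)-invariant. It is then \(\Sigma\)-invariant, because \(\Sigma=(\Sigma^2)^{1/2}\) is a norm limit of polynomials in \(\Sigma^2\).
\item \(\psi^*a\) is recovered as the limit of \(q_n(\Sigma)\psi^*a\) with \(q_n(0)=0\). Injectivity rules out an atom at \(0\) in the spectral measure of \(\psi^*a\).
\end{itemize}

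You instead dualize. Orthogonality to \(\mathcal{R}\) becomes the vanishing of the Stieltjes transform of a scalar spectral measure, and you then invoke a uniqueness theorem.

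What each approach buys:
\begin{itemize}
\item Your version compresses the whole argument into one scalar uniqueness statement. It also isolates the role of injectivity very cleanly: it is exactly the atom at \(0\). The cost is importing analytic continuation and Stieltjes inversion.
\item The paper's version stays within resolvent identities and the continuous functional calculus. It also shows directly that the span is \(\Sigma\)-invariant.
\end{itemize}

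Two small remarks. First, the uniqueness step is easier than you suggest. Since \(\rho_a\) is supported in \([0,\|\Sigma\|^2]\), expand \((\tau+s)^{-1}\) as a geometric series for \(s>\|\Sigma\|^2\). This shows that every moment of \(\rho_a\) vanishes, so \(\rho_a=0\) by Weierstrass density, with no complex analysis needed. Second, you open by saying it suffices to show \(\mathcal{R}\) is \(\Sigma\)-invariant and contains \(\Range(\psi^*)\), under the heading ``\(\Sigma\)-invariance''. You never carry out that strategy. The argument you actually give, \(\mathcal{R}^\perp\subseteq\mathcal{K}^\perp\), is complete on its own, so that framing should be removed.
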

\begin{proof}
    Write \(\mathcal{S}\) for the closed span. Since \(\mathcal{K}\) is a closed
    invariant subspace of the self-adjoint operator \(\Sigma\), it is
    reducing \cite[Proposition~II.3.7]{conway1990functional}, so
    \(\Sigma^2 + t^2\) is block diagonal with respect to
    \(\mathcal{K} \oplus \mathcal{K}^\perp\), and its inverse \(R_t\)
    maps \(\mathcal{K}\) into itself. As
    \(\Sigma\psi^*a \in \mathcal{K}\), this gives
    \(\mathcal{S} \subseteq \mathcal{K}\).

    For the reverse inclusion, fix \(a\) and set
    \(b := \Sigma\psi^*a\). First, \(b \in \mathcal{S}\), since
    \(\|t^2R_tb - b\| = \|R_t\Sigma^2b\| \leq \|\Sigma^2b\|/t^2\).
    Second, \(\mathcal{S}\) is \(\Sigma^2\)-invariant: the identity
    \(\Sigma^2R_tb = b - t^2R_tb\) shows that \(\Sigma^2\) maps each
    spanning vector into \(\mathcal{S}\), and boundedness passes invariance to
    the closed span. A closed \(\Sigma^2\)-invariant subspace is
    \(\Sigma\)-invariant, since
    \(\Sigma = (\Sigma^2)^{1/2}\) is a norm limit of
    polynomials in \(\Sigma^2\) by the continuous functional calculus
    \cite[Theorem~VII.1]{reedsimon1980functional}. Thus \(\mathcal{S}\) is a
    closed \(\Sigma\)-invariant subspace containing
    \(\Range(\Sigma\psi^*)\).

    It remains to check that every such subspace contains
    \(\mathcal{K}\), for which it suffices that \(\psi^*a\) lie in the
    closed \(\Sigma\)-invariant subspace generated by
    \(\Sigma\psi^*a\). The polynomials
    \(q_n(\lambda) := 1 - (1 - \lambda/\|\Sigma\|)^n\) vanish at zero,
    so \(q_n(\Sigma)\psi^*a\) lies in that subspace, while
    \(\|q_n(\Sigma)\psi^*a - \psi^*a\|^2 = \int |q_n - 1|^2\,\dd m
    \to 0\) by dominated convergence, where the spectral measure \(m\)
    of \(\psi^*a\) assigns no mass to \(\{0\}\) by the injectivity of
    \(\Sigma\).
\end{proof}

\begin{proof}[Proof of \cref{thm:ot-rank}]
    Define \(\Psi := \Sigma^{1/2}\psi^*\) and \(H := \Psi C \Psi^*\),
    so that \(\Sigma' = \Sigma + \Sigma^{1/2}H\Sigma^{1/2}\) where \(H\) is
    self-adjoint, finite rank, and nonzero since \(\Psi\) is
    injective and \(\Psi^*\) is surjective.

    In order to apply \cref{thm:ot-structure}, we first show that \(I+H\) is injective. If \((I + H)y = 0\), then
    \(y = -\Psi C\Psi^* y \in \Range(\Sigma^{1/2})\), and writing
    \(y = \Sigma^{1/2}x\) gives
    \(\Sigma' x = \Sigma^{1/2}(I + H)y = 0\) with \(x \neq 0\),
    contradicting the nondegeneracy of \(\eta\).
    Thus \(I + H\) is
    injective, and \cref{thm:ot-structure} provides the unique bounded
    positive \(A\) with \(A\Sigma A = \Sigma'\), its
    optimality, that \(S := A - I\) is trace class, and the integral
    representation \cref{eqn:ot-structure-int}.

    We now specialize \cref{eqn:ot-structure-int} to the finite rank
    perturbation to prove the four conclusions. Set \(R_t = (\Sigma^2 + t^2)^{-1}\) and \(G_t = \Sigma R_t \Sigma\) as in
    \cref{thm:ot-structure}, and let
    \(B := \Sigma^{1/2}\Sigma_{u\psi} = \Sigma^{3/2}\psi^*\).
    Restricting \(B\) to the range of \(C\), we may assume that \(C\)
    is invertible, and we set \(W_t := C^{-1} + B^*R_tB\), noting that
    \(\Psi^*G_t\Psi = \psi\Sigma^{3/2}R_t\Sigma^{3/2}\psi^* = B^*R_tB\).
    We show that the matrices \(W_t\) are invertible for every \(t\). From the proof of \cref{thm:ot-structure}, \(I + G_t H\) is invertible for every \(t\), and \(G_t H = G_t \Psi C \Psi^*\).
    Invertibility implies that \(-1 \not\in \sigma(G_t H)\), and cyclicity of the nonzero spectrum implies that \(-1 \not\in \sigma(\Psi^* G_t \Psi C )\), so \(W_t = (I + \Psi^* G_t \Psi C)\,C^{-1}\) is invertible.

    We now derive the integral representation \cref{eqn:ot-Delta}.
    Applying the push-through identity,
    \begin{equation*}
        H(I + G_tH)^{-1}
        = \Psi C\Psi^*\left(I + G_t\Psi C\Psi^*\right)^{-1}
        = \Psi C\left(I + \Psi^*G_t\Psi C\right)^{-1}\Psi^*
        = \Psi\, W_t^{-1}\, \Psi^*,
    \end{equation*}
    and since \(R_t\Sigma^{1/2}\Psi = R_t\Sigma\psi^*\),
    \cref{eqn:ot-structure-int} becomes
    \begin{equation}\label{eqn:ot-Delta}
        S = \frac{2}{\pi}\int_0^\infty t^2\,
        \left(R_t\Sigma\psi^*\right) W_t^{-1} \left(\psi\Sigma R_t\right)\dd t,
        \qquad
        A = I + S.
    \end{equation}
    Each integrand of \cref{eqn:ot-Delta} is self-adjoint of rank at most
    \(d\).

    We now prove the four claims. For the first,
    \(\Range(\Sigma\psi^*) \subseteq \mathcal{K}\) and \(R_t\) preserves
    \(\mathcal{K}\), so each integrand of \cref{eqn:ot-Delta} maps into
    \(\mathcal{K}\). As \(S\) is self-adjoint, it therefore vanishes on
    \(\mathcal{K}^\perp\).

    For the second claim, suppose \(C\) is definite. We show that the matrices \(W_t\), and hence
    their inverses in \cref{eqn:ot-Delta}, are definite with a
    common sign, so that no cancellation occurs in the integral. If \(C \succ 0\),
    then \(W_t \succ 0\) for every
    \(t\). If \(C \prec 0\), then the positivity of \(\Sigma'\) is
    equivalent, by a Schur complement, to
    \(C^{-1} + \Sigma_{\psi\psi} \prec 0\), while
    \(\Sigma^{3/2}R_t\Sigma^{3/2} \preceq \Sigma\) gives
    \begin{equation*}
        B^*R_tB = \psi\left(\Sigma^{3/2}R_t\Sigma^{3/2}\right)\psi^*
        \preceq \psi\Sigma\psi^* = \Sigma_{\psi\psi},
        \qquad\text{so}\qquad
        W_t \prec 0 \text{ for every } t.
    \end{equation*}
    In either case, \(\langle x, S x\rangle = 0\) forces
    \(\psi\Sigma R_tx = 0\) for all \(t\), that is, \(x\) is orthogonal
    to the closed span of
    \(\{R_t\Sigma\psi^*a : t > 0,\ a \in \R^d\}\). By
    \cref{lem:resolvent-span}, this span is \(\mathcal{K}\), so the
    closure of \(\Range(S)\) is \(\mathcal{K}\).

    For the third claim,
    \begin{equation*}
        \Sigma_{u\psi}C\Sigma_{\psi u} = A\Sigma A - \Sigma
        = S\Sigma + \Sigma S + S\Sigma S
    \end{equation*}
    has range contained in \(\Range(S) + \Sigma\Range(S)\), and
    therefore \(\operatorname{rank} C \leq 2\operatorname{rank} S\).

    For the final claim, assume the normalization \(\psi\psi^* = I\), which makes \(\psi^*\) an isometry
    onto \(\Range(\psi^*)\) and \(\psi^*\psi\) the orthogonal projection
    onto it.\footnote{If \(\psi\psi^* \neq I\), then the marginal cost is taken with respect to a norm induced by \(\psi\psi^*\), and we do not have an isometry on the subspace.} If \(\Range(\psi^*)\) is \(\Sigma\)-invariant, so is its
    orthogonal complement, and
    \(\Range(\Sigma_{u\psi}) = \Sigma\Range(\psi^*) = \Range(\psi^*)\), so
    both \(\RefM\) and \(\eta\) factor as products of Gaussians on
    \(\Range(\psi^*)\) with a common Gaussian factor on its complement. If
    \(T_1 = \nabla\phi_1\) is the OT map between the first factors,
    then \((u_1, u_2) \mapsto (T_1u_1, u_2)\) is the gradient of the
    convex function \(\phi_1(u_1) + \tfrac12\|u_2\|^2\) and pushes
    \(\RefM\) to \(\eta\), so it is the OT map. Since invariance
    gives
    \(\Sigma\psi^* = \psi^*\psi\Sigma\psi^* = \psi^*\Sigma_{\psi\psi}\),
    we have \(\Sigma_{u\psi}\Sigma_{\psi\psi}^{-1} = \psi^*\), and this
    map is exactly
    \begin{equation*}
        T_F(u) = u + \psi^*\left(F(\psi(u)) - \psi(u)\right),
        \qquad
        F = \psi \circ T_1 \circ \psi^*,
    \end{equation*}
    with \(F\) the OT map between the marginals.
\end{proof}

\section*{Acknowledgments}

AWH was supported by a Carl E. Pearson Fellowship and by the U.S. National Science Foundation under Award No. 2602390.
BH and AWH acknowledge support from the National Science Foundation grants DMS-2337678, ``Gaussian Processes for Scientific Machine Learning: Theoretical Analysis and Computational Algorithms'' and DMS-2208535, ``Machine Learning for Bayesian Inverse Problems''.

RB acknowledges support from the NSERC Discovery Grant (award RGPIN-2026-07896), the von K\'arm\'an instructorship at Caltech and a Department of Defense Vannevar Bush Faculty Fellowship (award N00014-22-1-2790) held by Andrew M. Stuart.

\section*{Use of large language models}
The authors used large language models to aid in performing the research and preparing this article. A variety of models were used within Claude Code to help with implementing the numerical experiments. The analysis of \cref{sec:analysis} and \cref{thm:ot-structure,thm:ot-rank} were refined and partly developed through interactive use of Fable 5 and GPT-5.6 Sol. Fable 5 through Claude Code was used for formatting and proofreading. The authors take full responsibility for the content and correctness of this article.

\bibliographystyle{preamble/mathrefs}
\bibliography{content/references}

\begin{thebibliography}{10}

\bibitem{albergo2025stochastic}
{\sc M.~Albergo, N.~M. Boffi, and E.~Vanden-Eijnden}, {\em Stochastic
  interpolants: A unifying framework for flows and diffusions}, Journal of
  Machine Learning Research, 26 (2025), pp.~1--80,
  \url{https://jmlr.org/papers/v26/23-1605.html}.

\bibitem{ambrosio2008gradient}
{\sc L.~Ambrosio, N.~Gigli, and G.~Savar{\'e}}, {\em Gradient Flows in Metric
  Spaces and in the Space of Probability Measures}, Lectures in Mathematics ETH
  Z{\"u}rich, Birkh{\"a}user, Basel, second~ed., 2008,
  \url{https://doi.org/10.1007/978-3-7643-8722-8}.

\bibitem{bakry2014analysis}
{\sc D.~Bakry, I.~Gentil, and M.~Ledoux}, {\em Analysis and Geometry of
  {M}arkov Diffusion Operators}, vol.~348 of Grundlehren der mathematischen
  Wissenschaften, Springer, Cham, 2014,
  \url{https://doi.org/10.1007/978-3-319-00227-9}.

\bibitem{baptista2025knothe}
{\sc R.~Baptista, F.~Hoffmann, M.~V.~H. Nguyen, and B.~Zhang}, {\em
  Knothe-{R}osenblatt maps via soft-constrained optimal transport}, 2025,
  \url{https://arxiv.org/abs/2511.04579}.

\bibitem{mgan}
{\sc R.~Baptista, B.~Hosseini, N.~B. Kovachki, and Y.~Marzouk}, {\em
  {Conditional Sampling with Monotone GANs: from Generative Models to
  Likelihood-Free Inference}}, SIAM/ASA Journal on Uncertainty Quantification,
  12 (2024), pp.~868--900, \url{https://doi.org/10.1137/23M1581546}.

\bibitem{baptista2025conditionalSimulation}
{\sc R.~Baptista, A.-A. Pooladian, M.~Brennan, Y.~Marzouk, and J.~Niles-Weed},
  {\em Conditional simulation via entropic optimal transport: Toward
  non-parametric estimation of conditional {B}renier maps}, in Proceedings of
  The 28th International Conference on Artificial Intelligence and Statistics,
  vol.~258 of Proceedings of Machine Learning Research, PMLR, 2025,
  pp.~4807--4815, \url{https://proceedings.mlr.press/v258/baptista25a.html}.

\bibitem{beckermann2021rational}
{\sc B.~Beckermann, A.~Cortinovis, D.~Kressner, and M.~Schweitzer}, {\em
  Low-rank updates of matrix functions {II}: Rational {K}rylov methods}, SIAM
  Journal on Numerical Analysis, 59 (2021), pp.~1325--1347,
  \url{https://doi.org/10.1137/20M1362553}.

\bibitem{beckermann2018low}
{\sc B.~Beckermann, D.~Kressner, and M.~Schweitzer}, {\em Low-rank updates of
  matrix functions}, SIAM Journal on Matrix Analysis and Applications, 39
  (2018), pp.~539--565, \url{https://doi.org/10.1137/17M1140108}.

\bibitem{benton2023error}
{\sc J.~Benton, G.~Deligiannidis, and A.~Doucet}, {\em Error bounds for flow
  matching methods}, Transactions on Machine Learning Research,  (2024),
  \url{https://openreview.net/forum?id=uqQPyWFDhY}.

\bibitem{bernstein2000rational}
{\sc D.~S. Bernstein and C.~F. Van~Loan}, {\em Rational matrix functions and
  rank-1 updates}, SIAM Journal on Matrix Analysis and Applications, 22 (2000),
  pp.~145--154, \url{https://doi.org/10.1137/S0895479898333636}.

\bibitem{blei2017variational}
{\sc D.~M. Blei, A.~Kucukelbir, and J.~D. McAuliffe}, {\em {Variational
  inference: A review for statisticians}}, Journal of the American Statistical
  Association, 112 (2017), pp.~859--877,
  \url{https://doi.org/10.1080/01621459.2017.1285773}.

\bibitem{bogachev2015gaussian}
{\sc V.~Bogachev}, {\em Gaussian Measures}, vol.~62 of Mathematical Surveys and
  Monographs, American Mathematical Society, Providence, RI, 1998,
  \url{https://doi.org/10.1090/surv/062}.

\bibitem{bogachev2010differentiable}
{\sc V.~I. Bogachev}, {\em Differentiable Measures and the {M}alliavin
  Calculus}, vol.~164 of Mathematical Surveys and Monographs, American
  Mathematical Society, Providence, RI, 2010,
  \url{https://doi.org/10.1090/surv/164}.

\bibitem{bogachev2012monge}
{\sc V.~I. Bogachev and A.~V. Kolesnikov}, {\em The {M}onge--{K}antorovich
  problem: achievements, connections, and perspectives}, Russian Mathematical
  Surveys, 67 (2012), pp.~785--890,
  \url{https://doi.org/10.1070/RM2012v067n05ABEH004808}.

\bibitem{bogachev2005triangular}
{\sc V.~I. Bogachev, A.~V. Kolesnikov, and K.~V. Medvedev}, {\em Triangular
  transformations of measures}, Sbornik: Mathematics, 196 (2005), pp.~309--335,
  \url{https://doi.org/10.1070/SM2005v196n03ABEH000882}.

\bibitem{bonnotte2013knothe}
{\sc N.~Bonnotte}, {\em {From Knothe's rearrangement to Brenier's optimal
  transport map}}, SIAM Journal on Mathematical Analysis, 45 (2013),
  pp.~64--87, \url{https://doi.org/10.1137/120874850}.

\bibitem{bouveyron2026scaling}
{\sc C.~Bouveyron and M.~Corneli}, {\em Scaling optimal transport to
  high-dimensional {G}aussian distributions with application to domain
  adaptation}, Statistics and Computing, 36 (2026), p.~88,
  \url{https://doi.org/10.1007/s11222-026-10851-7}.

\bibitem{lazy-map}
{\sc M.~C. Brennan, D.~Bigoni, O.~Zahm, A.~Spantini, and Y.~Marzouk}, {\em
  {Greedy inference with structure-exploiting lazy maps}}, in Advances in
  Neural Information Processing Systems, vol.~33, 2020,
  \url{https://proceedings.neurips.cc/paper/2020/hash/5ef20b89bab8fed38253e98a12f26316-Abstract.html}.

\bibitem{cabezas2024blackjax}
{\sc A.~Cabezas, A.~Corenflos, J.~Lao, and R.~Louf}, {\em Blackjax: Composable
  {B}ayesian inference in {JAX}}, 2024, \url{https://arxiv.org/abs/2402.10797}.

\bibitem{carere2024optimal}
{\sc G.~Carere and H.~C. Lie}, {\em Optimal low-rank posterior covariance
  approximation in linear {G}aussian inverse problems on {H}ilbert spaces},
  2024, \url{https://arxiv.org/abs/2411.01112}.

\bibitem{carlier2016vector}
{\sc G.~Carlier, V.~Chernozhukov, and A.~Galichon}, {\em {Vector quantile
  regression: An optimal transport approach}}, Annals of Statistics, 44 (2016),
  pp.~1165--1192, \url{https://doi.org/10.1214/15-AOS1401}.

\bibitem{carlier2010knothe}
{\sc G.~Carlier, A.~Galichon, and F.~Santambrogio}, {\em {From Knothe's
  transport to Brenier's map and a continuation method for optimal transport}},
  SIAM Journal on Mathematical Analysis, 41 (2010), pp.~2554--2576,
  \url{https://doi.org/10.1137/080740647}.

\bibitem{chemseddine2025conditional}
{\sc J.~Chemseddine, P.~Hagemann, G.~Steidl, and C.~Wald}, {\em Conditional
  {W}asserstein distances with applications in {B}ayesian {OT} flow matching},
  Journal of Machine Learning Research, 26 (2025), pp.~1--47,
  \url{https://www.jmlr.org/papers/v26/24-0586.html}.

\bibitem{chen2025gaussian}
{\sc Y.~Chen, B.~Hosseini, H.~Owhadi, and A.~M. Stuart}, {\em Gaussian measures
  conditioned on nonlinear observations: consistency, {MAP} estimators, and
  simulation}, Statistics and Computing, 35 (2025), p.~10,
  \url{https://doi.org/10.1007/s11222-024-10535-0}.

\bibitem{constantine2015active}
{\sc P.~G. Constantine}, {\em {Active Subspaces: Emerging Ideas for Dimension
  Reduction in Parameter Studies}}, vol.~2 of SIAM Spotlights, SIAM,
  Philadelphia, 2015, \url{https://doi.org/10.1137/1.9781611973860}.

\bibitem{ConstantineActiveSubspace}
{\sc P.~G. Constantine, E.~Dow, and Q.~Wang}, {\em {Active subspace methods in
  theory and practice: Applications to kriging surfaces}}, SIAM Journal on
  Scientific Computing, 36 (2014), pp.~A1500--A1524,
  \url{https://doi.org/10.1137/130916138}.

\bibitem{conway1990functional}
{\sc J.~B. Conway}, {\em A Course in Functional Analysis}, vol.~96 of Graduate
  Texts in Mathematics, Springer, New York, second~ed., 1990,
  \url{https://doi.org/10.1007/978-1-4757-4383-8}.

\bibitem{cotter2013mcmc}
{\sc S.~L. Cotter, G.~O. Roberts, A.~M. Stuart, and D.~White}, {\em {MCMC
  methods for functions: modifying old algorithms to make them faster}},
  Statistical Science, 28 (2013), pp.~424--446,
  \url{https://doi.org/10.1214/13-STS421}.

\bibitem{cuesta1996lower}
{\sc J.~A. Cuesta-Albertos, C.~Matr\'an-Bea, and A.~Tuero-D\'iaz}, {\em On
  lower bounds for the {$L^2$}-{W}asserstein metric in a {H}ilbert space},
  Journal of Theoretical Probability, 9 (1996), pp.~263--283,
  \url{https://doi.org/10.1007/BF02214649}.

\bibitem{lis}
{\sc T.~Cui, J.~Martin, Y.~M. Marzouk, A.~Solonen, and A.~Spantini}, {\em
  {Likelihood-informed dimension reduction for nonlinear inverse problems}},
  Inverse Problems, 30 (2014), p.~114015,
  \url{https://doi.org/10.1088/0266-5611/30/11/114015}.

\bibitem{cui-LIS}
{\sc T.~Cui and X.~T. Tong}, {\em {A unified performance analysis of
  likelihood-informed subspace methods}}, Bernoulli, 28 (2022), pp.~2788--2815,
  \url{https://doi.org/10.3150/21-BEJ1437}.

\bibitem{daprato2002second}
{\sc G.~Da~Prato and J.~Zabczyk}, {\em Second Order Partial Differential
  Equations in {H}ilbert Spaces}, vol.~293 of London Mathematical Society
  Lecture Note Series, Cambridge University Press, Cambridge, 2002,
  \url{https://doi.org/10.1017/CBO9780511543210}.

\bibitem{douglas1966majorization}
{\sc R.~G. Douglas}, {\em On majorization, factorization, and range inclusion
  of operators on {H}ilbert space}, Proceedings of the American Mathematical
  Society, 17 (1966), pp.~413--415,
  \url{https://doi.org/10.1090/S0002-9939-1966-0203464-1}.

\bibitem{moselhy2012bayesian}
{\sc T.~A. El~Moselhy and Y.~M. Marzouk}, {\em {Bayesian inference with optimal
  maps}}, Journal of Computational Physics, 231 (2012), pp.~7815--7850,
  \url{https://doi.org/10.1016/j.jcp.2012.07.022}.

\bibitem{fasi2023square}
{\sc M.~Fasi, N.~J. Higham, and X.~Liu}, {\em Computing the square root of a
  low-rank perturbation of the scaled identity matrix}, SIAM Journal on Matrix
  Analysis and Applications, 44 (2023), pp.~156--174,
  \url{https://doi.org/10.1137/22M1471559}.

\bibitem{feyel2004monge}
{\sc D.~Feyel and A.~S. {\"U}st{\"u}nel}, {\em Monge--{K}antorovitch measure
  transportation and {M}onge--{A}mp{\`e}re equation on {W}iener space},
  Probability Theory and Related Fields, 128 (2004), pp.~347--385,
  \url{https://doi.org/10.1007/s00440-003-0307-x}.

\bibitem{franzese2023functional}
{\sc G.~Franzese, G.~Corallo, S.~Rossi, M.~Heinonen, M.~Filippone, and
  P.~Michiardi}, {\em {Continuous-time functional diffusion processes}}, in
  Advances in Neural Information Processing Systems, vol.~36, 2023,
  \url{https://doi.org/10.52202/075280-1625}.

\bibitem{gneiting2007strictly}
{\sc T.~Gneiting and A.~E. Raftery}, {\em Strictly proper scoring rules,
  prediction, and estimation}, Journal of the American Statistical Association,
  102 (2007), pp.~359--378, \url{https://doi.org/10.1198/016214506000001437}.

\bibitem{gorham2017measuring}
{\sc J.~Gorham and L.~Mackey}, {\em Measuring sample quality with kernels}, in
  Proceedings of the 34th International Conference on Machine Learning, vol.~70
  of Proceedings of Machine Learning Research, PMLR, 2017, pp.~1292--1301,
  \url{https://proceedings.mlr.press/v70/gorham17a.html}.

\bibitem{gross1975logarithmic}
{\sc L.~Gross}, {\em Logarithmic {S}obolev inequalities}, American Journal of
  Mathematics, 97 (1975), pp.~1061--1083,
  \url{https://doi.org/10.2307/2373688}.

\bibitem{ho2020ddpm}
{\sc J.~Ho, A.~Jain, and P.~Abbeel}, {\em Denoising diffusion probabilistic
  models}, in Advances in Neural Information Processing Systems (NeurIPS),
  2020,
  \url{https://proceedings.neurips.cc/paper/2020/hash/4c5bcfec8584af0d967f1ab10179ca4b-Abstract.html}.

\bibitem{NUTS}
{\sc M.~D. Hoffman and A.~Gelman}, {\em The {N}o-{U}-{T}urn sampler: adaptively
  setting path lengths in {H}amiltonian {M}onte {C}arlo.}, J. Mach. Learn.
  Res., 15 (2014), pp.~1593--1623,
  \url{https://jmlr.org/papers/v15/hoffman14a.html}.

\bibitem{hosseini2025conditional}
{\sc B.~Hosseini, A.~W. Hsu, and A.~Taghvaei}, {\em Conditional optimal
  transport on function spaces}, SIAM/ASA Journal on Uncertainty
  Quantification, 13 (2025), pp.~304--338,
  \url{https://doi.org/10.1137/23M1618922}.

\bibitem{2025singleseedgenerationbrownianpaths}
{\sc A.~Jelinčič, J.~Foster, and P.~Kidger}, {\em Single-seed generation of
  {B}rownian paths and integrals for adaptive and high order {SDE} solvers},
  2024, \url{https://arxiv.org/abs/2405.06464}.

\bibitem{jordan1999introduction}
{\sc M.~I. Jordan, Z.~Ghahramani, T.~S. Jaakkola, and L.~K. Saul}, {\em {An
  introduction to variational methods for graphical models}}, Machine Learning,
  37 (1999), pp.~183--233, \url{https://doi.org/10.1023/A:1007665907178}.

\bibitem{kanagawa2018gaussianprocesseskernelmethods}
{\sc M.~Kanagawa, P.~Hennig, D.~Sejdinovic, and B.~K. Sriperumbudur}, {\em
  Gaussian processes and kernel methods: A review on connections and
  equivalences}, 2018, \url{https://arxiv.org/abs/1807.02582}.

\bibitem{dynamic_cot}
{\sc G.~Kerrigan, G.~Migliorini, and P.~Smyth}, {\em {Dynamic conditional
  optimal transport through simulation-free flows}}, in Advances in Neural
  Information Processing Systems 37 (NeurIPS 2024), 2024,
  \url{https://doi.org/10.52202/079017-2968}.

\bibitem{functional_flow}
{\sc G.~Kerrigan, G.~Migliorini, and P.~Smyth}, {\em {Functional Flow
  Matching}}, in Proceedings of the 27th International Conference on Artificial
  Intelligence and Statistics, vol.~238 of Proceedings of Machine Learning
  Research, PMLR, 2024, pp.~3934--3942,
  \url{https://proceedings.mlr.press/v238/kerrigan24a.html}.

\bibitem{knott1984optimal}
{\sc M.~Knott and C.~S. Smith}, {\em On the optimal mapping of distributions},
  Journal of Optimization Theory and Applications, 43 (1984), pp.~39--49,
  \url{https://doi.org/10.1007/BF00934745}.

\bibitem{kostrykin2007perturbation}
{\sc V.~Kostrykin, K.~A. Makarov, and A.~K. Motovilov}, {\em Perturbation of
  spectra and spectral subspaces}, Transactions of the American Mathematical
  Society, 359 (2007), pp.~77--89,
  \url{https://doi.org/10.1090/S0002-9947-06-03930-4}.

\bibitem{li2024slice}
{\sc S.~Li and C.~Moosm{\"u}ller}, {\em Approximation properties of
  slice-matching operators}, Sampling Theory, Signal Processing, and Data
  Analysis, 22 (2024), \url{https://doi.org/10.1007/s43670-024-00089-7}.

\bibitem{pmlr-scalable-gradients}
{\sc X.~Li, T.-K.~L. Wong, R.~T.~Q. Chen, and D.~Duvenaud}, {\em Scalable
  gradients for stochastic differential equations}, in Proceedings of the
  Twenty Third International Conference on Artificial Intelligence and
  Statistics, vol.~108 of Proceedings of Machine Learning Research, PMLR, 2020,
  pp.~3870--3882, \url{https://proceedings.mlr.press/v108/li20i.html}.

\bibitem{score_function_space}
{\sc J.~H. Lim, N.~B. Kovachki, R.~Baptista, C.~Beckham, K.~Azizzadenesheli,
  J.~Kossaifi, V.~Voleti, J.~Song, K.~Kreis, J.~Kautz, C.~Pal, A.~Vahdat, and
  A.~Anandkumar}, {\em {Score-based Diffusion Models in Function Space}},
  Journal of Machine Learning Research, 26 (2025), pp.~1--62,
  \url{https://jmlr.org/papers/v26/23-1472.html}.

\bibitem{lipman2023flow}
{\sc Y.~Lipman, R.~T.~Q. Chen, H.~Ben-Hamu, M.~Nickel, and M.~Le}, {\em Flow
  matching for generative modeling}, in The Eleventh International Conference
  on Learning Representations, 2023,
  \url{https://openreview.net/forum?id=PqvMRDCJT9t}.

\bibitem{manupriya2024consistent}
{\sc P.~Manupriya, R.~K. Das, S.~Biswas, and S.~N~Jagarlapudi}, {\em Consistent
  optimal transport with empirical conditional measures}, in Proceedings of The
  27th International Conference on Artificial Intelligence and Statistics,
  vol.~238 of Proceedings of Machine Learning Research, PMLR, 2024,
  pp.~3646--3654, \url{https://proceedings.mlr.press/v238/manupriya24a.html}.

\bibitem{marzouk2016sampling}
{\sc Y.~Marzouk, T.~Moselhy, M.~Parno, and A.~Spantini}, {\em {Sampling via
  measure transport: An introduction}}, in Handbook of Uncertainty
  Quantification, R.~Ghanem, D.~Higdon, and H.~Owhadi, eds., Springer
  International Publishing, Cham, 2016, pp.~1--41,
  \url{https://doi.org/10.1007/978-3-319-11259-6_23-1}.

\bibitem{masarotto2019procrustes}
{\sc V.~Masarotto, V.~M. Panaretos, and Y.~Zemel}, {\em Procrustes metrics on
  covariance operators and optimal transportation of {G}aussian processes},
  Sankhy\=a A, 81 (2019), pp.~172--213,
  \url{https://doi.org/10.1007/s13171-018-0130-1}.

\bibitem{masarotto2024transportation}
{\sc V.~Masarotto, V.~M. Panaretos, and Y.~Zemel}, {\em Transportation-based
  functional {ANOVA} and {PCA} for covariance operators}, Electronic Journal of
  Statistics, 18 (2024), pp.~1887--1916,
  \url{https://doi.org/10.1214/24-EJS2240}.

\bibitem{Merton1976JumpDiffusion}
{\sc R.~C. Merton}, {\em Option pricing when underlying stock returns are
  discontinuous}, Journal of Financial Economics, 3 (1976), pp.~125--144,
  \url{https://doi.org/10.1016/0304-405X(76)90022-2}.

\bibitem{minh2022finite}
{\sc H.~Q. Minh}, {\em Finite sample approximations of exact and entropic
  {W}asserstein distances between covariance operators and {G}aussian
  processes}, SIAM/ASA Journal on Uncertainty Quantification, 10 (2022),
  pp.~96--124, \url{https://doi.org/10.1137/21M1410488}.

\bibitem{moosmuller2023linear}
{\sc C.~Moosm{\"u}ller and A.~Cloninger}, {\em Linear optimal transport
  embedding: provable {W}asserstein classification for certain rigid
  transformations and perturbations}, Information and Inference: A Journal of
  the IMA, 12 (2023), pp.~363--389,
  \url{https://doi.org/10.1093/imaiai/iaac023}.

\bibitem{muzellec2019subspace}
{\sc B.~Muzellec and M.~Cuturi}, {\em Subspace detours: Building transport
  plans that are optimal on subspace projections}, in Advances in Neural
  Information Processing Systems, vol.~32, 2019,
  \url{https://proceedings.neurips.cc/paper/2019/hash/f9beb1e831faf6aaec2a5cecaf1af293-Abstract.html}.

\bibitem{niles2022estimation}
{\sc J.~Niles-Weed and P.~Rigollet}, {\em Estimation of {W}asserstein distances
  in the spiked transport model}, Bernoulli, 28 (2022), pp.~2663--2688,
  \url{https://doi.org/10.3150/21-BEJ1433}.

\bibitem{inf_dim_diffusion}
{\sc J.~Pidstrigach, Y.~Marzouk, S.~Reich, and S.~Wang}, {\em
  {Infinite-dimensional diffusion models}}, Journal of Machine Learning
  Research, 25 (2024), pp.~1--52,
  \url{https://jmlr.org/papers/v25/23-1271.html}.

\bibitem{pinski2015algorithms}
{\sc F.~J. Pinski, G.~Simpson, A.~M. Stuart, and H.~Weber}, {\em {Algorithms
  for Kullback--Leibler approximation of probability measures in infinite
  dimensions}}, SIAM Journal on Scientific Computing, 37 (2015),
  pp.~A2733--A2757, \url{https://doi.org/10.1137/14098171X}.

\bibitem{pinski2015kullback}
{\sc F.~J. Pinski, G.~Simpson, A.~M. Stuart, and H.~Weber}, {\em
  {Kullback--Leibler approximation for probability measures on infinite
  dimensional spaces}}, SIAM Journal on Mathematical Analysis, 47 (2015),
  pp.~4091--4122, \url{https://doi.org/10.1137/140962802}.

\bibitem{ponnoprat2025minimax}
{\sc D.~Ponnoprat and M.~Imaizumi}, {\em Minimax rates of estimation for
  optimal transport map between infinite-dimensional spaces}, 2025,
  \url{https://arxiv.org/abs/2505.13570}.

\bibitem{rahimi2007randomfeatures}
{\sc A.~Rahimi and B.~Recht}, {\em Random features for large-scale kernel
  machines}, in Advances in Neural Information Processing Systems 20,
  Proceedings of the Twenty-First Annual Conference on Neural Information
  Processing Systems, Vancouver, British Columbia, Canada, December 3--6, 2007,
  J.~C. Platt, D.~Koller, Y.~Singer, and S.~T. Roweis, eds., Curran Associates,
  Inc., 2007, pp.~1177--1184,
  \url{https://proceedings.neurips.cc/paper/2007/hash/013a006f03dbc5392effeb8f18fda755-Abstract.html}.

\bibitem{friendly-triangular}
{\sc M.~Ramgraber, D.~Sharp, M.~Le~Provost, and Y.~Marzouk}, {\em {A friendly
  introduction to triangular transport}}, 2025,
  \url{https://arxiv.org/abs/2503.21673}.

\bibitem{reedsimon1980functional}
{\sc M.~Reed and B.~Simon}, {\em Methods of Modern Mathematical Physics {I}:
  Functional Analysis}, Academic Press, New York, revised and enlarged~ed.,
  1980.

\bibitem{rezende2015variational}
{\sc D.~Rezende and S.~Mohamed}, {\em {Variational inference with normalizing
  flows}}, in International Conference on Machine Learning, PMLR, 2015,
  pp.~1530--1538, \url{https://proceedings.mlr.press/v37/rezende15.html}.

\bibitem{santoro2025large}
{\sc L.~V. Santoro and V.~M. Panaretos}, {\em Large sample theory for
  {B}ures--{W}asserstein barycentres}, The Annals of Applied Probability, 35
  (2025), pp.~3215--3241, \url{https://doi.org/10.1214/25-AAP2192}.

\bibitem{shmueli2024lowrank}
{\sc S.~Shmueli, P.~Drineas, and H.~Avron}, {\em Low-rank updates of matrix
  square roots}, Numerical Linear Algebra with Applications, 31 (2024),
  p.~e2528, \url{https://doi.org/10.1002/nla.2528}.

\bibitem{sohl-dickstein2015noneq}
{\sc J.~Sohl-Dickstein, E.~A. Weiss, N.~Maheswaranathan, and S.~Ganguli}, {\em
  Deep unsupervised learning using nonequilibrium thermodynamics}, in
  Proceedings of the 32nd International Conference on Machine Learning (ICML),
  2015, pp.~2256--2265,
  \url{https://proceedings.mlr.press/v37/sohl-dickstein15.html}.

\bibitem{song2019score}
{\sc Y.~Song and S.~Ermon}, {\em Generative modeling by estimating gradients of
  the data distribution}, in Advances in Neural Information Processing Systems
  (NeurIPS), vol.~32, 2019,
  \url{https://proceedings.neurips.cc/paper_files/paper/2019/file/3001ef257407d5a371a96dcd947c7d93-Paper.pdf}.

\bibitem{song2021score_sde}
{\sc Y.~Song, J.~Sohl-Dickstein, D.~P. Kingma, A.~Kumar, S.~Ermon, and
  B.~Poole}, {\em Score-based generative modeling through stochastic
  differential equations}, in International Conference on Learning
  Representations (ICLR), 2021,
  \url{https://openreview.net/forum?id=PxTIG12RRHS}.

\bibitem{sprungk2020local}
{\sc B.~Sprungk}, {\em On the local {L}ipschitz stability of {B}ayesian inverse
  problems}, Inverse Problems, 36 (2020), p.~055015,
  \url{https://doi.org/10.1088/1361-6420/ab6f43}.

\bibitem{stuart2010inverse}
{\sc A.~M. Stuart}, {\em Inverse problems: a {B}ayesian perspective}, Acta
  Numerica, 19 (2010), pp.~451--559,
  \url{https://doi.org/10.1017/S0962492910000061}.

\bibitem{tabak2021conditional}
{\sc E.~G. Tabak, G.~Trigila, and W.~Zhao}, {\em {Data driven conditional
  optimal transport}}, Machine Learning, 110 (2021), pp.~3135--3155,
  \url{https://doi.org/10.1007/s10994-021-06060-0}.

\bibitem{villani2009optimal}
{\sc C.~Villani}, {\em {Optimal Transport: Old and New}}, vol.~338 of
  Grundlehren der mathematischen Wissenschaften, Springer, Berlin, 2009,
  \url{https://doi.org/10.1007/978-3-540-71050-9}.

\bibitem{debias-coarsely}
{\sc Z.~Y. Wan, R.~Baptista, Y.-f. Chen, J.~Anderson, A.~Boral, F.~Sha, and
  L.~Zepeda-N{\'u}{\~n}ez}, {\em {Debias Coarsely, Sample Conditionally:
  Statistical Downscaling through Optimal Transport and Probabilistic Diffusion
  Models}}, in Advances in Neural Information Processing Systems, vol.~36,
  2023, \url{https://doi.org/10.52202/075280-2069}.

\bibitem{wang2025conditional}
{\sc Z.~O. Wang, R.~Baptista, Y.~Marzouk, L.~Ruthotto, and D.~Verma}, {\em
  {Efficient neural network approaches for conditional optimal transport with
  applications in Bayesian inference}}, SIAM Journal on Scientific Computing,
  47 (2025), \url{https://doi.org/10.1137/24M1678659}.

\bibitem{pathwise_gp}
{\sc J.~T. Wilson, V.~Borovitskiy, A.~Terenin, P.~Mostowsky, and M.~P.
  Deisenroth}, {\em Pathwise conditioning of {G}aussian processes}, J. Mach.
  Learn. Res., 22 (2021), pp.~1--47,
  \url{https://jmlr.org/papers/v22/20-1260.html}.

\bibitem{yun2025gaussian}
{\sc H.~Yun and Y.~Zemel}, {\em Gaussian optimal transport beyond {B}renier's
  theorem}, 2025, \url{https://arxiv.org/abs/2512.21464}.

\bibitem{certified}
{\sc O.~Zahm, T.~Cui, K.~Law, A.~Spantini, and Y.~Marzouk}, {\em {Certified
  dimension reduction in nonlinear Bayesian inverse problems}}, Mathematics of
  Computation, 91 (2022), pp.~1789--1835,
  \url{https://doi.org/10.1090/mcom/3737}.

\bibitem{func_norm_flow}
{\sc Y.~Zhao, H.~Lu, J.~Jia, and T.~Zhou}, {\em {Functional normalizing flow
  for statistical inverse problems of partial differential equations}}, 2024,
  \url{https://arxiv.org/abs/2411.13277}.

\end{thebibliography}

\end{document}